\numberwithin{equation}{section}
\theoremstyle{plain}
\newtheorem{theorem}{Theorem}[section]
\newtheorem{lemma}[theorem]{Lemma}
\newtheorem{remark}[theorem]{\bf{Remark}}
\newtheorem{assumption}[theorem]{Assumption}
\newtheorem{definition}[theorem]{Definition}
\theoremstyle{remark}
\theoremstyle{definition}
\newcommand\footnoteref[1]{\protected@xdef\@thefnmark{\ref{#1}}\@footnotemark}
\newcommand\dP{\mathds{P}}
\newcommand\dR{\mathds{R}}
\newcommand\dE{\mathds{E}}
\newcommand\dN{\mathds{N}}
\newcommand\dH{\mathds{H}}
\newcommand\dL{\mathds{L}}
\newcommand\dD{\mathds{D}}
\newcommand\dS{\mathds{S}}
\newcommand\bi{\mathbb{1}}
\newcommand\cB{\mathcal{B}}
\newcommand\cF{\mathcal{F}}
\newcommand\cG{\mathcal{G}}
\newcommand\cH{\mathcal{H}}
\newcommand\cL{\mathcal{L}}
\newcommand\cP{\mathcal{P}}
\newcommand\cS{\mathcal{S}}
\newcommand\fm{\mathfrak{m}}
\newcommand\fA{\mathfrak{A}}
\newcommand\fB{\mathfrak{B}}
\newcommand\fp{\mathfrak{p}}
\newcommand\fq{\mathfrak{q}}
\newcommand\fr{\mathfrak{r}}
\newcommand\cbrk{\text{$]$\kern-.15em$]$}}
\newcommand\opar{\text{\,\raise.2ex\hbox{${\scriptstyle
|}$}\kern-.34em$($}}
\newcommand\cpar{\text{$)$\kern-.34em\raise.2ex\hbox{${\scriptstyle |}$}}\,}
\newcommand\ep{\varepsilon}
\begin{document}

\title{The compact support property for solutions to the stochastic partial differential  equations with colored noise  \footnote{Research supported in part by the National Research Foundation of Korea (NRF) grant NRF-2021R1C1C2007792 (B.H.) and NRF grants  2019R1A5A1028324 and 2020R1A2C4002077 [K.K. \& J.Y.]} }

%% Group authors per affiliation:
\author{Beom-Seok Han
\and Kunwoo Kim  
\and Jaeyun Yi 
}

\date{\today}

\maketitle

\begin{abstract}  
We study the compact support property for nonnegative solutions of the following stochastic partial differential  equations: 
$$
\partial_t u =  a^{ij}u_{x^ix^j}(t,x)+b^{i}u_{x^i}(t,x)+cu\,+h(t,x,u(t,x))\dot{F}(t,x),\quad  (t,x)\in (0,\infty)\times\dR^d,
$$ where $\dot{F}$ is a spatially homogeneous Gaussian noise that is white in time and colored in space, and $h(t, x, u)$  satisfies $K^{-1}u^\lambda\leq h(t, x, u) \leq K(1+u)$ for  $\lambda\in(0,1)$ and  $K\geq 1$. We show that if the initial data $u_0\geq 0$ has a compact support, then,  under the \emph{reinforced} Dalang's condition on $\dot F$ (which guarantees the existence and the H\"older continuity of  a weak solution), all nonnegative weak solutions $u(t, \cdot)$ have  the compact support  for all $t>0$ with probability 1.  Our results extend  the works by Mueller-Perkins (\cite{mueller1992compact}) and Krylov (\cite{krylov1997result}),  in which they show the compact support property only for the one-dimensional   SPDEs driven by   space-time white  noise on $(0, \infty)\times \dR$.

\vspace{1cm} 
 
\noindent{\it Keywords:} Stochastic partial differential  equation, compact support property, colored noise, $L_p$-theory\\
	
\noindent{\it \noindent MSC 2020 subject classification:} 60H15, 35R60
\end{abstract}

%\tableofcontents

%%%%%%%%%%%%%%%%%%%%%%%%%%%%%%%%%%%%%%%%%%%%%%%%%%%%%%%%%%%%%%%%%%%%%%%%%%%%%%%%%%%%%

\section{Introduction}\label{sec: introduction}
In this paper, we aim to study the compact support property of nonnegative solution to the following stochastic partial differential equations
 \begin{equation}
\label{original_equation}
\partial_t u(\omega,t,x) = \cL u(\omega,t,x)\,+h(\omega,t,x,u(t,x))\dot{F}(t,x),\quad  (t,x)\in (0,\infty)\times\dR^d;\quad u(0,\cdot) = u_0,
\end{equation}
where $\cL$ is the random differential operator defined as 
\begin{equation*}
\begin{gathered}
\cL u(\omega,t,x)  = a^{ij}(\omega,t,x) u_{x^ix^j} + b^i(\omega,t,x) u_{x^i} + c(\omega,t,x) u
\end{gathered}
\end{equation*}
and $h(\omega,t,x,u)$ is a function such that for some nonrandom constants  $K\geq 1$ and $\lambda\in(0,1)$,
\begin{equation}\label{eq:condition_h}
K^{-1}u^\lambda \leq h(\omega,t,x,u)  \leq K(1+u) 
\quad\quad \text{for all \quad\quad$\omega\in\Omega,\,\,t>0,\,\,x\in\dR^d,\,\,u\geq0$}.
\end{equation}
We assume Einstein's  summation convention on $i$ and $j$, and $i$ and $j$ go from 1 to $d$. 
The detailed conditions on $a,b,c$, and $h$ are specified below in Assumptions \ref{assumptions on coefficients} and \ref{assumptions on h}. 
The nonrandom initial data $u_0$ is a nonnegative H\"older continuous function with compact support. The noise $\dot{F}$ is a centered generalized  Gaussian random field whose covariance is given by
\begin{equation*}
\dE[F(t,x)F(s,y)] = \delta_0(t-s)f(x-y),
\end{equation*}
where $\delta_0$ is the  Dirac delta distribution and $f$ is a {\it correlation function (or measure)}. In other words, $f$ is a nonnegative and nonnegative definite function/measure. We denote that the spectral measure  $\mu$ of $f$ is a nonnegative tempered measure defined by
\begin{equation}
 	\label{spectral measure}
 	\mu(\xi):=\cF(f)(\xi) := \frac{1}{(2\pi)^{d/2}}\int_{\dR^d} e^{-i\xi\cdot x } f(dx)
\end{equation} 
which is the Fourier transform of $f$.

The {\it compact support property (CSP)} for \eqref{original_equation} means that if the initial function $u_0$ has a compact support, then so does the solution $u(t, \cdot)$ for all $t>0$ with probability 1, and this property has been studied extensively for one-dimensional stochastic heat equations driven by space-time white noise.  More precisely, consider the following stochastic heat equations:
\begin{equation}\label{eq:SHE} 
\partial_t u = u_{xx} + u^\lambda \, \dot{W}, \quad (t,x)\in (0,\infty)\times\dR;\quad u(0,\cdot) = u_0,
\end{equation}
where $\dot W$ is space-time white noise (i.e. $f=\delta_0$) and $\lambda>0$ is a fixed constant. 

When $\lambda=1/2$ in \eqref{eq:SHE}, the solution to \eqref{eq:SHE} can be regarded as a density of super-Brownian motion, and Iscoe (\cite{Iscoe1988}) showed that CSP holds for \eqref{eq:SHE} by using some singular elliptic boundary value problem  related to properties of super-Brownian motion.  Shiga (\cite{shiga1994two}) extended Iscoe's technique to show CSP for \eqref{eq:SHE} when $ \lambda \in (0\,, 1/2)$. For $\lambda \in (1/2\,, 1)$, Mueller and Perkins (\cite{mueller1992compact}) showed among other things that  CSP holds for \eqref{eq:SHE}  by constructing and then applying  a {\it  historical process} that represents the ancestry of particles. In fact, the case where $\lambda \in (1/2, 1)$ is more delicate, as explained in \cite{mueller1992compact}, since this case can be regarded as a super-Brownian motion with birth and death rates slowed down (that is, the coefficient of the noise $u^\lambda$ is  smaller than $u^{1/2}$  when $u$ is close to 0), but still the effect of noise is strong enough for \eqref{eq:SHE} to have CSP.  On the other hand, when $\lambda\geq 1$, Mueller (\cite{Mueller1991}) showed that CSP does not hold. In this case, when $u$ is close to 0, $u^\lambda$ for $\lambda\geq 1$ becomes quite   small so that the positivity of the heat semigroup wins against the effect of noise.  Hence, for \eqref{eq:SHE}, the critical exponent $\lambda$  that does not guarantee CSP is $1$. 

Regarding the one-dimensional SPDE \eqref{original_equation} with the random second order differential operator $\cL$ and with  space-time white noise, using completely different methods that are based on the weak solution to \eqref{original_equation}, Krylov (\cite{krylov1997result}) showed that CSP holds when $h(u) = u^\lambda$ for any $\lambda \in (0\, , 1)$ under the hypothesis of the existence of a suitable solution. Following the same argument by Krylov (\cite{krylov1997result}), Rippl ( \cite{rippl2013pathwise}) showed that CSP holds for  \eqref{eq:SHE}  when $\dot W$ is replaced by $\dot F$ whose spatial covariance is of Riesz type, i.e., $f(x)=|x|^{-\alpha}$ for $\alpha \in (0, 1)$ and $x\in \dR$. The proof therein strongly depends on the scaling property of the specific correlation function $|x|^{-\alpha}$. Moreover, both authors in \cite{krylov1997result} and \cite{rippl2013pathwise} only considered the one-dimensional SPDEs, and the generalization of their techniques to higher dimensions is highly nontrivial, which is one of our main contributions in this paper.

When $d\geq 2$, in order to have a function-valued solution to \eqref{original_equation}, noise should be colored in space. In particular, when $\cL=\Delta$ and $h(u)$ is globally Lipschitz, Dalang (\cite{dalang1999extending}) showed that  if  the spectral measure $\mu$ of $f$ satisfies 
\begin{equation}\label{Dalang's condition0}
	\int_{\dR^d} \frac{\mu(d \xi)}{1+|\xi|^2} <  \infty, 
\end{equation} 
there exists a unique random field solution.  However, this condition itself does not guarantee that the solution is  continuous in $t$ and $x$ (see \cite{CHKK2019,FK2013}), and  one sufficient condition that guarantees the H\"older continuity of the solution  is  the following:  
\begin{equation}\label{Dalang's condition}
	\int_{\dR^d} \frac{\mu(d \xi)}{(1+|\xi|^2)^{1-\eta}} < +\infty \quad \text{for some $\eta\in(0,1]$,}
\end{equation} 
which is called the {\it reinforced Dalang's condition} (see \cite{sanz2002holder,ferrante2006spdes,CH2019}). Under the condition \eqref{Dalang's condition}, when $\cL = \Delta$  and $h(u) \approx u$ in \eqref{original_equation}, following Mueller's argument (\cite{Mueller1991}),  Chen and Huang (\cite{CH2019}) showed that CSP does not hold. In addition, when $\cL = \Delta$ and $h(u)=u^\lambda$ for $\lambda>1$ in \eqref{original_equation}, following Mueller's argument (\cite{Mueller1991}),  one can show  that  CSP does not hold under the condition \eqref{Dalang's condition}.  Thus, a natural question arises whether CSP holds for \eqref{original_equation} when $h$ satisfies \eqref{eq:condition_h} for $\lambda \in (0\, ,1)$.  To the best of our knowledge, there is no literature on CSP for \eqref{original_equation} even when $\cL=\Delta$ on $\dR^d$ for $d\geq 2$,  and we provide an answer in the affirmative in this paper. 

Our main result of this paper is that when $h$ satisfies \eqref{eq:condition_h} and the random second order differential operator $\cL$ in \eqref{original_equation} satisfies some natural condition (see Assumption \ref{assumptions on coefficients}),  CSP holds for \eqref{original_equation} only under the condition \eqref{Dalang's condition},  which includes $f(x)=\delta_0(x)$, $f(x)=|x|^{-\alpha}$ for $\alpha \in (0, 2\wedge d)$, $f\in C_c(\dR^d)$, and $f(x)=1$ for $x\in \dR^d$. Note that $f\equiv 1$ means  the noise is just white noise in time only, i.e., $\dot F(t, x)=\dot  B_t$ where $B_t$ is the standard one-dimensional Brownian motion. Therefore, our result highlights that CSP is  essentially due to the sub-linearity $u^\lambda$ with $\lambda \in (0\,,1)$, not to the choice of noise. In addition, it also suggests that, as for one-dimensional stochastic heat equations with space-time white noise,  the critical exponent $\lambda$ that does not guarantee CSP for \eqref{original_equation} is 1.

Let us now mention  a few words about the proof. Since \eqref{original_equation} includes the random second order differential operator $\mathcal{L}$, we  use the approach given by Krylov \cite{krylov1997result}. Krylov \cite{krylov1997result} assumed the existence of   a nonnegative  solution. From this assumption, he obtained Lemma 2.1 in \cite{krylov1997result}, which is the crucial lemma that basically  produces CSP.  In this paper, on the other hand,  we first show by using a compactness argument and the $L_p$ theory (\cite{kry1999analytic, kry1996})  that a  solution exists and is also nonnegative  (see Theorem \ref{thm:existence of solution}), and then obtain Lemma \ref{ito_prop} that corresponds to Lemma 2.1 in \cite{krylov1997result}. However, since our spatial space can be $\dR^d$ for $d\geq 2$, which makes  the boundary of a ball in $\dR^d$ much more complex than the boundary of a ball in $\dR$ (i.e., just single points), we need another lemma (Lemma \ref{L_1 bound with weight})  for the proof of CSP in order to control the boundary behavior of the solution. In addition, because our noise is spatially correlated,  some new ideas should be developed  to obtain Lemma \ref{ito_prop}. The first idea is to use some comparison argument that can compare the given nonnegative definite measure $f$ with another nonnegative definite function that can be written as a convolution of some other functions (see Lemma \ref{lemma:lower bound of covariance}). This comparison allows us to bound  the integral of a function by the integral of a function convoluted with the nonnegative definite measure $f$ (see \eqref{ineq:key ineq in the proof of key lemma 4}).  Another one is Lemma \ref{lemma:integral inequality}, which is used to adjust the exponent of integrals in Lemma \ref{ito_prop}. In order to use Lemma \ref{lemma:integral inequality}, the solution $u$ should be H\"older continuous in $t$ and $x$, and we show H\"older continuity of the solution by using the $L_p$-theory (see \cite{kry1999analytic, kry1996}). 
  
The paper is organized as follows. In Section \ref{sec:Main}, we provide some basic definitions and assumptions, and then state our main theorems (Theorems \ref{thm:existence of solution} and \ref{thm:cpt_support}). The proof of Theorem \ref{thm:existence of solution} is given in Section \ref{sec:proof of holder conti of sol}, whereas the proof of Theorem \ref{thm:cpt_support} is provided     in Section \ref{sec:proof of main thm} based on the auxiliary results that are proved in Sections \ref{subsec:proof of props} and \ref{sec:proof of L_1 bound}.

We finish this section with some notations. Throughout this paper, we assume Einstein’s summation convention on $i$, $j$, and $k$. The sets $\dN$ and $\dR$ are the sets of natural numbers and real numbers, respectively. The set $\dR^d$ denotes the $d$-dimensional Euclidean space of points $x = (x^1,\dots,x^d)$ for $x^i\in\dR$. The set $\cS: = \{ f\in C^\infty(\dR^d) : \sup_{x\in \dR^d} | x^\alpha (D^\beta f)(x)| <\infty \text{ for all $\alpha,\beta \in \dN^d$} \}$ denotes the set of Schwartz functions on $\dR^d$.  We denote a generic constant by $N = N(a_1,a_2,...,a_k)$ if $N$ depends only on $a_1,a_2,...,a_k$. The numeric value of $N$ can be changed line by line tacitly. 
For functions  depending on $\omega$, $t$, and $x$, the argument $\omega \in \Omega$ is omitted. Finally, for a function $\varphi(x),$ let $\tilde{\varphi}(x)$ and $\hat{\varphi}(\xi)$ denote $\varphi(-x)$ and $\cF(\varphi)(\xi)$, respectively.

%%%%%%%%%%%%%%%%%%%%%%%%%%%%%%%%%%%%%%%%%%%%%%%%%%%%%%%%%%%%%%%%%%%%%%%%%%%%%%%%%%%%%%%%%%%%%%%%%%%%%%%%%%%%%%%%%%%%%%%%%%%%

\section{Main Results}\label{sec:Main}
In this section, we first provide  some basic definitions and assumptions related to \eqref{original_equation}, and then state our main theorems (Theorems \ref{thm:existence of solution} and \ref{thm:cpt_support}).

Let $(\Omega, \cF, \dP)$ be a complete probability space,   $\{\cF_t : t\geq0 \}$  be an increasing  filtration  of $\sigma$-fields $\cF_t \subset \cF$ satisfying the usual conditions, and  $\cP$ be the predictable $\sigma$-field related to $\cF_t$. Consider a mean-zero Gaussian process $\{F(\phi):\phi\in\cS(\dR^{d+1})\}$ on $(\Omega, \cF, \dP)$ with covariance functional given by 
\begin{equation}
\label{def of covaiance functional}
\begin{aligned}
\dE \left[F(\phi)F(\psi)\right] 
&= \int_0^\infty  dt \int_{\dR^d}  f(dx)\, (\phi(t,\cdot) *  \tilde \psi(t, \cdot))(x)  \\
& = \int_0^\infty dt  \int_{\dR^d} \mu(d\xi)\,  \hat \phi(t, \xi) \, \overline{\hat \psi(t, \xi)}, 
\end{aligned}
\end{equation}
where $\tilde \psi (t, x):=\psi(t, -x)$, $\hat\phi(t, \cdot):= \cF(\phi(t,\cdot)) (\xi)$ is the Fourier transform of $\phi$ with respect to $x$,  $f$ is a nonnegative, and nonnegative definite tempered  measure, and $\mu$ is the Fourier transform of $f$ introduced in \eqref{spectral measure} so that it is again  a nonnegative definite tempered measure. Following the Dalang-Walsh theory (see \cite{walsh1986introduction,dalang1999extending}), the process $F$ can be extended to an $L^2(\Omega)$-valued martingale measure $F(ds,dx)$, where $F(ds,dx)$ is employed to formulate \eqref{original_equation} in the weak sense (see Definition \ref{definition_of_solution} and Remark \ref{random measure}). We impose the following assumption on $f$.

\begin{assumption}\label{assumption on f}
The correlation measure $f$ satisfies  the following: For some $\eta\in(0,1]$,
\begin{equation}
\label{reinforced Dalang's condition}
\begin{aligned}
\begin{cases}
\int_{|x|<1} |x|^{2-2\eta -d} \, f(dx)  < +\infty \quad &\text{if}\quad 0<1-\eta<\frac{d}{2},	\\
\int_{|x|<1} \log\left(\frac{1}{|x|}\right)\, f(dx) < +\infty \quad &\text{if}\quad 1-\eta=d/2, \\
\text{no conditions on }f &\text{if}\quad  1-\eta>d/2.
\end{cases}
\end{aligned}
\end{equation} 

\end{assumption}

\begin{remark} \begin{enumerate}[(i)]

\item   \eqref{reinforced Dalang's condition} is equivalent to the reinforced Dalang's condition \eqref{Dalang's condition} (see \cite[Proposition 5.3]{sanzsolesarra2000}).  As explained before,  the reinforced Dalang's condition  guarantees that a solution exists and is H\"older  continuous in $t$ and $x$ when $h$ is globally Lipschitz in $u$ in \eqref{original_equation} (see \cite[Theorem 6]{ferrante2006spdes} or \cite[Theorem 3.5]{choi2021regularity}).

\item 
A large class of nonnegative, nonnegative definite tempered measures  satisfies Assumption \ref{assumption on f}. For example, one may consider $f(dx) = f(x)dx$, where $f(x)$ can be the Riesz kernel $f(x)= |x|^{-\alpha}$ with $\alpha\in (0,2\wedge d)$, the Ornstein-Uhlenbeck-type kernel $f(x)= \exp(-|x|^\beta)$ with $\beta\in(0,2]$, and the Brownian motion $f(x) \equiv 1$. Additionally, a continuous, nonnegative and nonnegative definite function $f$ with compact support satisfies \eqref{reinforced Dalang's condition}.

\end{enumerate}
\end{remark}

Below Assumptions \ref{assumptions on coefficients} and \ref{assumptions on h} describe conditions on coefficients. 

\begin{assumption}
\label{assumptions on coefficients}
\begin{enumerate}[(i)]
\item 
The coefficients $a^{ij} = a^{ij}(t,x)$, $b^i = b^i(t,x)$ and $c = c(t,x)$ are $\cP\times\cB(\dR^d)$-measurable. 

\item 
The functions $a^{ij}_{x^ix^j}$, $a^i_{x^i}$, $a^i$, $b^i_{x^i}$, $b^i$, and $c$ are continuous in $x$.

\item
There exists a finite constant $K\geq 1$ such that
\begin{equation}
\label{ellipticity}
K^{-1}|\xi|^2\leq  a^{ij}\xi^i \xi^j\leq K|\xi|^2\quad \mbox{for all}\quad \omega\in \Omega,\,\,t>0,\,\,x\in\dR^d,\,\,\xi\in\dR^d
\end{equation}
and
\begin{equation*}
|a^{ij}(t,\cdot)|_{C^2{(\dR^d)}} +  |b^{i}(t,\cdot)|_{C^1{(\dR^d)}} + |c(t,\cdot)|_{C{(\dR^d)}} \leq K\quad\mbox{for all}\quad \omega\in\Omega,\,\,t>0,
\end{equation*} 
where $C^k(\dR^d)$ $(k=1,2)$ is the set of all $k$ times continuously differentiable functions on $\dR^d$ with finite norm 
\begin{equation*}
	\| u \|_{C^k(\dR^d)} := \sum_{|\alpha|\leq k}\sup_{x\in\dR^d}|D^\alpha u(x)|.
\end{equation*}

\end{enumerate}
\end{assumption}

\begin{assumption}
\label{assumptions on h}
Let $\lambda\in (0,1)$.
\begin{enumerate}[(i)]
\item 
The function $h(t,x,u)$ is $\cP\times\cB(\dR^d)\times\cB(\dR)$-measurable and $h(t,x,\cdot):\dR \rightarrow \dR$ is continuous for every $\omega \in \Omega, t>0$ and $x\in\dR^d$.  

\item The function $h$ satisfies 
\begin{equation}
	h(t,x,u) = 0 \quad\text{for all}\quad   \omega\in\Omega,\,\, t>0,\,\, x\in\dR^d,\,\, u \leq 0.
\end{equation}

\item 
There exists a finite constant $K\geq 1$ such that
\begin{equation}
\label{condition of h} K^{-1} u^\lambda \leq h(t, x, u)  
\leq K ( 1 + u ) 
\quad \text{for all}\quad \omega\in\Omega,\,\, t>0,\,\, x\in\dR^d,\,\, u \geq 0.
\end{equation}

\end{enumerate}
\end{assumption}

\begin{remark}

\begin{enumerate}[(i)]

\item 
The coefficients $a^{ij}$, $b^i$, $c$, and $h$ are random functions.

\item Without loss of generality, we assume the same constant $K$ in Assumptions \ref{assumptions on coefficients} and \ref{assumptions on h}.

\item 
In \eqref{condition of h}, the upper bound of $h$ is used to obtain the existence and the H\"older regularity of a solution. On the other hand, the lower bound of $h$ is employed to calculate the probability estimate of the integral of the solution on circles $\{ x:|x| = R \}$ and $\{ x:|x| = R + r \}$ $(R,r>0)$, which is essential to obtain the compact support property of a solution $u$; see Lemma \ref{ito_prop} (ii).

\end{enumerate}
\end{remark}

Next we introduce the assumption on the initial data $u_0$. 

\begin{assumption}
\label{assumption on initial data}
\begin{enumerate}[(i)]
\item 
The function $u_0(\cdot)$ is nonrandom, nonnegative, and $u_0(\cdot)\in C^\eta(\dR^d)$, where $\eta$ is the constant introduced in \eqref{reinforced Dalang's condition}.

\item 
The function $u_0(\cdot)$ has a compact support on $\dR^d$. In other words, there exists $R_0\in(0,\infty)$ such that support of $u_0$ is in $\{ x\in\dR^d: |x|<R_0 \}$.
\end{enumerate}
\end{assumption}

We now present the definition of a solution. First, we define
$$C_{tem} := \left\{ u\in C(\dR^d):\sup_{x\in\dR^d}|u(x)|e^{-a |x|}<\infty\text{ for any } a>0 \right\}.
$$ 

In the rest of this section, $\tau$ is any given bounded stopping time, i.e., $\tau\leq  T$ for some nonrandom number $T>0$ almost surely. 

\begin{definition} 
\label{definition_of_solution}
A continuous $C_{tem}$-valued function $u = u(t,\cdot)$ defined on $\Omega\times[0,\tau]$ is said to be a solution to \eqref{original_equation}  on $[0,\tau]$ if for any $\phi\in \cS$
\begin{enumerate}[(i)]
\item 
the process $\int_{\dR^d} \phi(x)u(t\wedge \tau,x) dx$ is well-defined, 
$\cF_t$-adapted, and continuous;

\item 
the process 
$$ \int_{\dR^d}\int_{\dR^d} h(t\wedge \tau,y,u(t \wedge \tau,y))\phi(y) h(t \wedge \tau,y+x,u(t\wedge \tau,y+x))\phi(y+x) f(dx)dy$$ 
is well-defined, $\cF_t$-adapted, measurable with respect to $(\omega,t)$, and 
\begin{equation}
\label{quadratic variation of stochastic part}
\begin{gathered}
 \int_0^\tau \int_{\dR^d}\int_{\dR^d}h(t,y,u(t,y))\phi(y)h(t,y+x,u(t,y+x))\phi(y+x)f(dx) dy dt <\infty
\end{gathered}
\end{equation}
almost surely;

\item
the equation
\begin{equation} 
\label{eq:sol_int_eq_form}
(u(t,\cdot),\phi) = (u_0,\phi)+\int_0^t(u(s,\cdot), (\cL^*\phi)(s,\cdot)) ds + \int_0^t\int_{\dR^d}h(s,x,u(s,x))\phi(x) F(ds,dx)
\end{equation}
holds for all $t\leq \tau$ almost surely, where
\begin{equation}\label{eq:adjoint}
\begin{aligned}
& (\cL^* \phi)(s,x) = \left( a^{ij}_{x^ix^j} -  b^i_{x^i} + c \right)\phi  + \left( 2 a^{ij}_{x^j} - b^i \right) \phi_{x^i} +  a^{ij}  \phi_{x^ix^j}.
\end{aligned}
\end{equation}

\end{enumerate}

\end{definition}

\begin{remark}
\label{random measure}
\begin{enumerate}[(i)]
\item 
\label{random measure-1}
In \eqref{eq:sol_int_eq_form}, the Dalang-Walsh theory  is employed   to define a stochastic integral with respect to $F(ds,dx)$ and we call it Walsh's stochastic integral; see \cite{dalang1998stochastic,dalang1999extending}. As in \cite{ferrante2006spdes,choi2021regularity}, Walsh's stochastic integral can be written as an infinite summation of It\^o's stochastic integral; for example, if $X(t,\cdot)$ is a predictable process such that
$$ X(t,x) = \zeta(x)1_{ \opar\tau_1,\tau_2\cbrk}(t),
$$
where $\tau_1$, $\tau_2$ are bounded stopping times, $\opar\tau_1,\tau_2\cbrk:=\{ (\omega,t):\tau_1(\omega)<t\leq \tau_2(\omega) \}$, and $\zeta\in C_c^\infty$, then we have 
\begin{equation} 
\label{decomposition of walsh integral}
\int_{0}^t\int_{\dR^d}X(s,x)F(ds,dx)=\sum_{k=1}^{\infty}\int_0^t \int_{\dR^d} X(s,x) \, (f\ast e_k)(x) \, dxdw^k_s,
\end{equation}
where $\{w_t^k, k \in \dN\}$ is a collection of  one-dimensional independent Wiener processes and $\{ e_k, k \in \dN\}\subseteq \cS$ is a complete orthonormal system of a Hilbert space $\cH$ induced by $f$; see \cite{ferrante2006spdes,choi2021regularity}. The construction and properties of $\cH$ and $\{e_k, k\in\dN \}$ are described in \cite[Remark 2.6]{choi2021regularity}.

\item 
Thanks to \eqref{quadratic variation of stochastic part} and \eqref{decomposition of walsh integral}, the stochastic integral term in \eqref{eq:sol_int_eq_form} can be written as 
\begin{equation*}
\begin{aligned}
\int_0^t\int_{\dR^d}h(s,x,u(s,x))\phi(x) F(ds,dx)= \sum_{k=1}^{\infty}\int_0^t \int_{\dR^d}h(s,x,u(s,x))\phi(x)   \, (f\ast e_k)(x) \, dxdw^k_s.
\end{aligned}
\end{equation*}

\end{enumerate}
\end{remark}

Throughout this paper, we always assume that Assumptions \ref{assumption on f}, \ref{assumptions on coefficients}, \ref{assumptions on h}, and  \ref{assumption on initial data} hold. Now we show  the  existence and H\"older regularity of a solution. Since the proof of the following theorem is  rather classical, we include the proof at  the end (see Section \ref{sec:proof of holder conti of sol}).    

\begin{theorem}[\bf Weak existence and H\"older regularity of the solution]
\label{thm:existence of solution} 
There exists a stochastically weak solution $u \in C([0,\tau];C_{tem}(\dR^d))$ to \eqref{original_equation} satisfying $u\geq0$. In addition, for any solution $u \in C([0,\tau];C_{tem}(\dR^d))$ to \eqref{original_equation} and any $\gamma\in\left(0,\eta/2\right)$, we have that for every $a>0$,
\begin{equation}
\label{holder regularity of the solution}
\| \Psi_{a} u \|_{C^{\gamma}([0,\tau]\times\dR^d)} <\infty\quad\text{almost surely,} 
\end{equation}
where $\eta$ is the constant introduced in \eqref{reinforced Dalang's condition} and $\Psi_a(x) = \Psi_a(|x|) = \frac{1}{\cosh(a|x|)}$.

\end{theorem}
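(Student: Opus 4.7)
The plan is to prove the H\"older estimate first, since it applies to any solution and depends only on the upper bound $h\leq K(1+u)$ and the reinforced Dalang's condition, and then to produce a nonnegative solution by approximating $h$ with globally Lipschitz nonlinearities and passing to a subsequential limit via a compactness argument. The overall structure parallels the classical scheme of \cite{ferrante2006spdes, choi2021regularity} for globally Lipschitz coefficients, except that near $u=0$ the coefficient $h(u)\sim u^\lambda$ is only H\"older, so uniqueness is lost and a compactness argument in $C([0,\tau];C_{tem}(\dR^d))$ replaces contraction-type arguments.

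For the H\"older regularity of an arbitrary solution $u$, I would first combine the weak formulation \eqref{eq:sol_int_eq_form} with Krylov's $L_p$-theory \cite{kry1999analytic,kry1996} to obtain the mild representation
\begin{equation*}
u(t,x)=G^{\cL}u_0(t,x)+\int_0^t\!\!\int_{\dR^d} G^{\cL}(t,s,x,y)\,h(s,y,u(s,y))\,F(ds,dy),
\end{equation*}
where $G^{\cL}$ is the fundamental solution of $\partial_t-\cL$. Applying the Burkholder-Davis-Gundy inequality together with the covariance formula \eqref{def of covaiance functional}, the $p$-th moment of space-time increments of the stochastic convolution reduces to a bound that, under \eqref{reinforced Dalang's condition}, is controlled by the finite spectral integral $\int(1+|\xi|^2)^{-(1-\eta)}\mu(d\xi)$. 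A Kolmogorov-type argument then yields $\gamma$-H\"older continuity for every $\gamma<\eta/2$. The exponential weight $\Psi_a$ is handled by invoking the Gaussian upper bound on $G^{\cL}$ guaranteed by Assumption \ref{assumptions on coefficients} to move $\Psi_a(x)$ inside the Duhamel convolution at the cost of a slight adjustment of $a$, and only then closing the estimate via Gr\"onwall's inequality, which absorbs the linear growth of $h(s,y,u)\leq K(1+u)$.

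For existence, I introduce a smooth Lipschitz approximation $h_n(t,x,u)$ that agrees with $h$ for $u\in[1/n,n]$, vanishes on $\{u\leq 0\}$, is capped at $n$, and still satisfies the uniform upper bound $h_n\leq K(1+u)$. For each $n$, extending \cite[Theorem 6]{ferrante2006spdes} to variable coefficients via Krylov's $L_p$-theory produces a unique H\"older continuous solution $u_n$. Nonnegativity $u_n\geq 0$ follows from applying It\^o's formula to a smoothed version of $(u_n)^-$ and using that $h_n\equiv 0$ on $\{u\leq 0\}$ together with $u_0\geq 0$. The estimate from the previous paragraph applied uniformly in $n$ (the constants depend only on $K$, not on the Lipschitz norm of $h_n$) gives
\begin{equation*}
\sup_{n}\mathbb{E}\,\|\Psi_a u_n\|^p_{C^{\gamma}([0,\tau]\times\dR^d)}<\infty
\end{equation*}
for every $p\geq 1$, $a>0$ and $\gamma<\eta/2$. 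Tightness of the laws of $\{u_n\}$ in $C([0,\tau];C_{tem}(\dR^d))$ then follows by Arzel\`a-Ascoli applied to each seminorm $\|\Psi_a\,\cdot\,\|_\infty$. Passing to a subsequence and using Skorokhod's representation, one may assume $u_n\to u$ almost surely; the pointwise convergence $h_n(u_n)\to h(u)$ together with the uniform $L^p$ bounds allows one to pass to the limit in every term of \eqref{eq:sol_int_eq_form}, using the series representation \eqref{decomposition of walsh integral} and martingale convergence for the stochastic integral.

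The main obstacle is deriving the uniform weighted moment bounds at the level of the exponential weight $\Psi_a$ rather than a polynomial weight: since $\Psi_a$ decays exponentially while $h$ only has linear growth, a naive moment estimate on $\mathbb{E}[u_n(t,x)^p]$ would blow up in $x$, and the fix is to use the Gaussian bound on $G^{\cL}$ \emph{before} taking moments, which translates the weight at a controlled loss in the parameter $a$. A secondary delicate point is the pathwise nonnegativity of the approximate solutions under colored noise, which, while standard, relies crucially on the structural choice that $h_n\equiv 0$ on $\{u<0\}$.
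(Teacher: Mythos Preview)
Your high-level existence scheme (Lipschitz approximation $h_n\to h$, uniform weighted H\"older bounds on $u_n$, tightness in $C([0,\tau];C_{tem})$, Skorokhod representation, passage to the limit) matches the paper's. The gap is in the regularity argument itself.

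You propose to derive from the weak formulation a mild/Duhamel representation
\[
u(t,x)=G^{\cL}u_0(t,x)+\int_0^t\!\!\int_{\dR^d} G^{\cL}(t,s,x,y)\,h(s,y,u(s,y))\,F(ds,dy)
\]
and then run a BDG/Kolmogorov argument on the stochastic convolution. But by Assumption~\ref{assumptions on coefficients}(i) the coefficients $a^{ij},b^i,c$ are $\cP\times\cB(\dR^d)$-measurable, i.e.\ genuinely random and predictable. The propagator $G^{\cL}(t,s,x,y)$ depends on the coefficients over the whole interval $[s,t]$ and is therefore \emph{not} $\cF_s$-measurable; the integrand in your stochastic convolution fails to be predictable and the integral is not a legitimate It\^o/Walsh integral. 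This is not a technicality one can patch: it is exactly the obstruction that forces Krylov's analytic $L_p$-theory in place of the mild formulation for random second-order operators. (Your sentence ``combine the weak formulation with Krylov's $L_p$-theory to obtain the mild representation'' conflates the two approaches; Krylov's theory never passes through a mild form.)

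What the paper does instead, both for the uniform bounds on $u_n$ and for the a posteriori regularity of an arbitrary solution, is to set $v:=\Psi\,u$ with $\Psi=\Psi_{a/p}$ (respectively $\Psi_{2a}$), compute the SPDE satisfied by $v$---the weight produces an additional drift $g$ of lower order and multiplies the diffusion coefficient by $\Psi$---and show directly that $v\in\cH_p^{\eta}(\tau)$ via Krylov's a priori estimate \cite[Theorem~5.1]{kry1999analytic}. The colored-noise input enters through the bound
\[
\|h(u)\Psi\,\bm\nu\|_{H_p^{\eta-1}(\ell_2)}^p\;\leq\; N\,\|h(u)\Psi\|_{L_p}^p\Bigl(\int_{\dR^d}R_{2-2\eta}(y)\,f(dy)\Bigr)^{p/2},
\]
obtained from the kernel representation of $(1-\Delta)^{-(1-\eta)/2}$ and finite precisely under the reinforced Dalang's condition \eqref{reinforced Dalang's condition}. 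A Gr\"onwall in $t$ closes the $\cH_p^\eta$ estimate (the constants depend only on $K$, not on the Lipschitz norm of $h_n$, which is what gives uniformity in $n$), and the embedding $\cH_p^\eta(\tau)\hookrightarrow C^{\alpha-1/p}([0,\tau];C^{\eta-2\beta-d/p})$ yields the H\"older regularity. The exponential weight is thus handled \emph{exactly} by rewriting the equation for $\Psi u$, not by Gaussian bounds on a (non-adapted) propagator.
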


\begin{remark}
When $\mathcal{L}=\Delta$ in \eqref{original_equation},  
Mytnik-Perkins-Sturm (\cite{mytnik2006pathwise}) showed the existence of a stochastically weak solution by using a compactness argument.  Indeed, they also showed the pathwise uniqueness of  a solution under certain conditions on $h$ and $f$. Here, we also use a compactness argument but in a different way. That is, we use the $L_p$-theory  to get tightness conditions (see  Section \ref{sec:proof of holder conti of sol}). However, since we have the random second order differential operator in \eqref{original_equation}, it is not easy to show the pathwise uniqueness by  applying the arguments in \cite{mytnik2006pathwise}. Thus, we do not assume  the solution is unique, but this does not affect our proof since we only care about the compact support property of any solution.  
\end{remark}

Now we introduce the main result of this paper, which states the compact support property holds for \eqref{original_equation}.

\begin{theorem}[\bf Compact support property]
\label{thm:cpt_support}
Suppose $u$ is a nonnegative solution to \eqref{original_equation} and $R_0$ is the constant introduced in Assumption \ref{assumption on initial data}. Then, for almost every $\omega$, there exists $R=R(\omega)\in(R_0,\infty)$ such that $u(t,x) = 0$ for all $t\leq \tau$ and $x\in \{ x\in \dR^d: |x| > R \}$.
\end{theorem}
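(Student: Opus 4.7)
My plan is to follow Krylov's weak-solution strategy from \cite{krylov1997result}, replacing his one-dimensional white-noise computations with multidimensional colored-noise analogues made possible by the auxiliary lemmas advertised in the introduction. Fix $R>R_0$ and pick a smooth cutoff $\phi_R\geq 0$ that vanishes on $\{|x|\leq R\}$ and is positive outside. Since $\mathrm{supp}(u_0)\subset\{|x|<R_0\}\subset\{|x|<R\}$, the localized mass $V_R(t):=(u(t,\cdot),\phi_R)$ starts at zero. The plan is to show that the sub-linear lower bound $h\geq K^{-1}u^\lambda$ with $\lambda\in(0,1)$ produces enough noise-induced damping to trap $V_R$ at zero, and then iterate over a sequence $R_n\uparrow R_\infty<\infty$ to obtain compact support.

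The central computation is to apply It\^o's formula to the concave function $V_R(t)^{1-\lambda}$. From the weak formulation \eqref{eq:sol_int_eq_form}, the drift of $V_R$ is $(u(t),\cL^*\phi_R)$, which I will control using Assumption \ref{assumptions on coefficients} together with Lemma \ref{L_1 bound with weight} to handle the mass near the boundary $\{|x|=R\}$ on which $\cL^*\phi_R$ is supported. The quadratic variation of the martingale part takes the form
\[ \int_0^t \int_{\dR^d}\int_{\dR^d} h(s,y,u(s,y))\phi_R(y)\,h(s,y+x,u(s,y+x))\phi_R(y+x)\, f(dx)\,dy\,ds, \]
and the It\^o correction $-\tfrac{1}{2}\lambda(1-\lambda)V_R^{-\lambda-1}\,d\langle V_R\rangle$ supplies the damping. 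Using the lower bound on $h$, the comparison Lemma \ref{lemma:lower bound of covariance} (dominating $f$ from below by a convolution $g*\tilde{g}$), and Lemma \ref{lemma:integral inequality} for the exponent adjustment (which invokes the H\"older regularity from Theorem \ref{thm:existence of solution}), I expect the quadratic variation density to be bounded below by $c\,V_R(s)^{1+\lambda}$, so that the It\^o correction delivers a constant negative drift on $\{V_R>0\}$. This is precisely the content of Lemma \ref{ito_prop}.

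With Lemma \ref{ito_prop} in hand, the iteration picks radii $R_n=R_0+\sum_{k\leq n}\delta_k$ with $\sum_k\delta_k<\infty$ and shows via stopping-time arguments that the probability that $V_{R_n}$ fails to be absorbed at zero by time $\tau$ is summable in $n$; a Borel--Cantelli step then produces the almost-sure finite $R(\omega)$ of the theorem. The main obstacle is the colored-noise lower bound on the quadratic variation: for white noise one has immediately $\int h(u)^2\phi_R^2\,dx\geq K^{-2}\int u^{2\lambda}\phi_R^2\,dx$, which a Jensen-type inequality turns into a power of $V_R$, but for a general correlation $f$ the off-diagonal interactions obstruct this, and it is Lemma \ref{lemma:lower bound of covariance} combined with the exponent bookkeeping of Lemma \ref{lemma:integral inequality} that bridges the gap. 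A secondary challenge is the boundary control of Lemma \ref{L_1 bound with weight}, needed because in $d\geq 2$ the sphere $\{|x|=R\}$ is no longer a pair of points and the derivatives of $\phi_R$ appearing in $\cL^*\phi_R$ require a genuine codimension-one integral estimate on $u$ that has no analogue in Krylov's one-dimensional setting.
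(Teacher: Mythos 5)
Your plan names the right supporting lemmas, and the broad Krylov flavor of localizing mass outside a ball and exploiting $h\geq K^{-1}u^\lambda$ is on target, but the core computation you propose --- It\^o on $V_R^{1-\lambda}$ with a pathwise bound $d\langle V_R\rangle\geq c\,V_R^{1+\lambda}\,dt$ giving a constant negative drift --- is not what the paper does and has two genuine gaps. First, Lemma~\ref{ito_prop} is not a quadratic-variation lower bound; it is a probability comparison: part \eqref{key_prop_1} bounds $\dP(\Omega_R)$ from below by $\dP\bigl(\int_0^\tau(\int_{\partial Q_R}u\,d\sigma_R)^l\,ds=0\bigr)$, and part \eqref{key_prop_2} relates this between consecutive spheres $\partial Q_R$ and $\partial Q_{R+r}$ with an explicit $R$- and $r$-dependent remainder. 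Second, the reverse-Jensen inequality (Lemma~\ref{lemma:integral inequality}\eqref{item:integral inequality 1}) combined with Lemma~\ref{lemma:lower bound of covariance} does \emph{not} deliver a clean $c\,V^{1+\lambda}$: what one actually gets is $\int(g*\tilde g)\,f\gtrsim R^{-\mathrm{const}}\,(\int g)^{2l}$ with $l=\frac{\gamma\lambda+d}{\gamma+d}\in(\lambda,1)$, where the hidden constant degrades polynomially in $R$ and blows up as the annulus width $r\to0$. A damping that weakens as $R\to\infty$ cannot trap $V_R$ at zero for all large $R$, which is exactly why the paper does not argue via a concave transform of the mass.

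The paper's route stays at the level of probabilities. After a stopping-time reduction so that $\|\Psi_a u\|_{C^\gamma([0,\tau]\times\dR^d)}\leq H$, Lemma~\ref{ito_prop}\eqref{key_prop_2} is iterated over annuli of width $r_k\sim\epsilon(k+1)^{-2}$ with thresholds $p_k=\xi e^{-k}$; the accumulated errors form a convergent series (precisely because $L=\frac{\gamma+1}{\gamma l+1}>1$) bounded by $N_1\,R^{c_1}e^{c_2 R}\xi^{\alpha l(L-1)}$. Choosing $\xi=e^{-\delta R}$ with $\delta$ large enough kills this term as $R\to\infty$, while the leading probability $\dP\bigl(\int_0^\tau(\int_{\partial Q_R}u\,d\sigma_R)^l ds\geq\xi^l\bigr)$ is handled by Chebyshev, Jensen, and Lemma~\ref{L_1 bound with weight}, which gives $\dE\int_0^\tau e^{\delta R}\int_{\partial Q_R}u\,d\sigma_R\,dt\to0$. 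Since $\Omega_R$ is increasing in $R$ and $\dP(\Omega_R)\to1$, the theorem follows with no Borel--Cantelli. Note also that your Borel--Cantelli step over a convergent $R_n\uparrow R_\infty<\infty$ would, if it worked, yield a.s.\ containment of the support in the \emph{deterministic} ball $\{|x|\leq R_\infty\}$, which is stronger than the theorem asserts and cannot hold; the argument only produces $\lim_{R\to\infty}\dP(\Omega_R)=1$, not summability of failure probabilities along a fixed bounded sequence.
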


%%%%%%%%%%%%%%%%%%%%%%%%%%%%%%%%%%%%%%%%%%%%%%%%%%%%%%%%%%%%%%%%%%%%%%%%%%%%%%%%%%%%%

\section{Proof of Theorem \ref{thm:cpt_support}}
\label{sec:proof of main thm}

 In this section, we assume the results of Theorem \ref{thm:existence of solution} and provide the proof of Theorem \ref{thm:cpt_support}. Our proof  is based on the approach of Krylov  \cite{krylov1997result}. We first present two lemmas (Lemmas \ref{L_1 bound with weight} and \ref{ito_prop}), which are  
essential tools for the proof of Theorem  \ref{thm:cpt_support} and whose proofs are given in Sections \ref{sec:proof of L_1 bound} and \ref{subsec:proof of props} respectively.  Throughout this section, $u$ is referred to any solution to \eqref{original_equation} that is  nonnegative and continuous almost surely. We also use the following notations through  the remaining part of the paper: For $a>0$, define
\begin{equation}
\label{def of Psi}
	\Psi_a(x) := \Psi_a(|x|) := \frac{1}{\cosh(a|x|)}.
\end{equation}
For $r>0$, set
$$Q_{r} := \left\{ x  \in \dR^d : |x|>r  \right\}.
$$
The measure $d\sigma_R$ denotes surface measure on $\partial Q_R$. Let $T>0$ be an arbitrary fixed real constant and $R_0>0$ be the constant introduced in Assumption \ref{assumption on initial data},  i.e., $supp(u_0) \subseteq Q_{R_0}^c$. The following lemma shows the limit behavior of $u$ on $\partial Q_R$ as $R\to\infty$.

\begin{lemma}
\label{L_1 bound with weight} 
Let $\tau\leq T$ be a bounded stopping time. Suppose that there exist $a,H\in (0,\infty)$ 
such that 
\begin{equation}
\label{holder reg for ito ineq0}
\sup_{t\leq\tau}\sup_{x\in\dR^d} \Psi_a(x)u(t,x)\leq H \quad \text{almost surely},
\end{equation}
where $\Psi_a$ is the function introduced in \eqref{def of Psi}. Then, for every $\delta >0$, we have
\begin{equation}
\label{eq:integral bound with weight}
\limsup_{R\to\infty}\dE\left[\int_0^\tau e^{\delta R}\int_{\partial Q_R} u(t,\sigma) d\sigma_R  dt\right]= 0.
\end{equation}

\end{lemma}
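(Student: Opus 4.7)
The plan is to show that for each $\delta > 0$, the quantity
$$\tilde H(R) := \dE\Bigl[\int_0^\tau \int_{\partial Q_R} u(t,\sigma) \, d\sigma_R \, dt\Bigr]$$
tends to zero faster than $e^{-\delta R}$. The strategy has two main steps: (i) derive an exponentially decaying bound, at arbitrary rate, on the volumetric analog $\dE[\int_0^\tau \int_{\{R-1<|x|<R+1\}} u \, dx \, dt]$ using the weak formulation with a radial exponential test function; (ii) upgrade this volumetric bound to the surface integral via the H\"older continuity provided by Theorem~\ref{thm:existence of solution}.

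For step (i), I would fix $c>0$ and $x_R \in \dR^d$ with $|x_R|=R>R_0$, and take the Schwartz test function $\phi(x) := \exp(-c \sqrt{1+|x-x_R|^2})$. A direct computation gives $|\cL^*\phi| \le N_1(1+c)^2 \phi$ for some $N_1 = N_1(K,d)$. Substituting $\phi$ into \eqref{eq:sol_int_eq_form}, taking expectations so that the stochastic integral drops out, and applying Gronwall's inequality, yields $\dE[\int u(t\wedge\tau, x) \phi(x) \, dx] \le N_2 \exp(-c(R-R_0) + N_1(1+c)^2 T)$. Since $\phi \ge e^{-\sqrt{2}\,c}$ on the unit ball around $x_R$, this gives $\dE[\int_0^\tau \int_{B_1(x_R)} u \, dx \, dt] \le N_{c,T}\, e^{-cR}$ for any $c > 0$, and covering the annulus $\{R-1 < |x| < R+1\}$ by $O(R^{d-1})$ unit balls centered on $\partial Q_R$ produces
$$\dE\Bigl[\int_0^\tau \int_{\{R-1 < |x| < R+1\}} u(t,x) \, dx \, dt\Bigr] \le N_{c,T}\, R^{d-1}\, e^{-c R}.$$

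For step (ii), by Theorem~\ref{thm:existence of solution} together with the $L_p$-moment estimates from the $L_p$-theory used in its proof, $\dE[\|\Psi_a u\|_{C^\gamma([0,\tau]\times \dR^d)}] < \infty$. Combined with the hypothesis $\Psi_a u \le H$ and the smoothness of $\Psi_a$, a short computation gives, for $\theta\in S^{d-1}$ and $r\in[R, R+1]$,
$$|u(t, R\theta) - u(t, r\theta)| \le N\, e^{aR} \bigl(\|\Psi_a u\|_{C^\gamma} + H\bigr) |r-R|^\gamma.$$
Averaging $u(t, R\theta)$ over $r\in[R, R+h]$ (with $h \in (0,1]$), integrating over $\theta\in S^{d-1}$ and $t\in[0,\tau]$, and taking expectation yields
$$\tilde H(R) \le \frac{N_{c,T}\, R^{d-1}\, e^{-cR}}{h} + N_3\, T\, R^{d-1}\, e^{aR}\, h^\gamma \bigl(\dE[\|\Psi_a u\|_{C^\gamma}] + H\bigr).$$
Optimizing in $h$ produces $\tilde H(R) \le N\, R^{d-1}\, e^{-\beta R}$ with $\beta = (c\gamma - a)/(1+\gamma)$; choosing $c$ large enough that $\beta > \delta$ gives $e^{\delta R} \tilde H(R) \to 0$ as $R\to\infty$, which is stronger than the stated conclusion. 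The main technical obstacle will be confirming $\dE[\|\Psi_a u\|_{C^\gamma}]<\infty$, since Theorem~\ref{thm:existence of solution} only asserts almost sure finiteness; this should follow from Kolmogorov's continuity criterion applied to the $L_p$-moment estimates from Krylov's $L_p$-theory in Section~\ref{sec:proof of holder conti of sol}. Failing that, a truncation on the event $\{\|\Psi_a u\|_{C^\gamma} \le M\}$, combined with the pointwise bound $u(t,x) \le H \cosh(a|x|)$ to handle the complementary event, should suffice once $c$ and $M$ are chosen suitably large.
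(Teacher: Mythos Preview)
Your plan is sound and the argument goes through, but it is a genuinely different route from the paper's. The paper never passes through H\"older regularity: instead it applies the weak formulation to test functions approximating $M(x)e^{K_1 M(x)}\mathbb{1}_{Q_R}(x)$ (with $M(x)=(|x|-R)_+$ and $K_1>0$ large), mirroring the computation of Lemma~\ref{prop:estimation of quadratic variation} but with the signs of $K_1,K_2$ reversed. The surface integral $\int_{\partial Q_R} u\,d\sigma_R$ then emerges from the limiting behavior of the derivatives of the radial cutoff $\phi_m$ at the boundary, and one obtains directly
\[
e^{K_1R}\,\dE\Bigl[\int_0^\tau\!\!\int_{\partial Q_R} u\,d\sigma_R\,ds\Bigr]\le N\,\dE\Bigl[\int_{Q_R}|x|e^{K_1|x|}u(\tau,x)\,dx\Bigr],
\]
after which a separate lemma (Lemma~\ref{lemma:integral bound on solution with weight}) shows the right-hand side tends to $0$. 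This uses only continuity and the sup bound \eqref{holder reg for ito ineq0}, never the H\"older norm.

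Your approach is conceptually lighter (exponential test function plus Gronwall for the volumetric decay, then H\"older averaging to reach the sphere), and it yields an explicit decay rate, which the paper's method does not. The price is the moment bound $\dE\|\Psi_b u\|_{C^\gamma}<\infty$; your first suggestion for this is correct and is precisely what the $L_p$ machinery in Section~\ref{sec:proof of holder conti of sol} supplies: under \eqref{holder reg for ito ineq0} the stopping time $\tau_n$ in Step~2 of the proof of Theorem~\ref{thm:existence of solution} can be taken equal to $\tau$ (for $n\ge H\vee T$), and one obtains $\Psi_{2a}u\in\cH_p^\eta(\tau)$ with an explicit norm bound, hence $\dE\|\Psi_{2a}u\|_{C^\gamma}^p<\infty$ by Theorem~\ref{embedding theorems for cH}. (Note you will need $\Psi_b$ with $b>a$, not $\Psi_a$ itself, so the H\"older step picks up $e^{bR}$ rather than $e^{aR}$; this is harmless since $c$ is still free.) Your fallback truncation, however, does not work as stated: on $\{\|\Psi_a u\|_{C^\gamma}>M\}$ the pointwise bound only gives $\int_{\partial Q_R}u\,d\sigma_R\lesssim R^{d-1}e^{aR}$, and without a tail estimate on $\|\Psi_a u\|_{C^\gamma}$ (i.e., without the moment bound you were trying to avoid) you cannot make $e^{\delta R}R^{d-1}e^{aR}\,\dP(\|\Psi_a u\|_{C^\gamma}>M)\to 0$.
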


\vspace{2mm}

In Lemma \ref{ito_prop} below,  a version of the maximum principle for $u$ is introduced. In other words, we show that if $u$ is zero on $\partial Q_R$ almost surely, $u$ is also zero on $Q_R$.  The last part of the lemma, which has a crucial role in the proof of Theorem \ref{thm:cpt_support}, shows a probability estimate of the integral value of the solution on circles.

\begin{lemma}
\label{ito_prop}
Let $\tau\leq T$ be a bounded stopping time and $K$ be the constant introduced in Assumptions \ref{assumptions on coefficients} and \ref{assumptions on h}. Set $ l := \frac{\gamma\lambda+d}{\gamma+d}\in(0,1)$, where $\gamma$ is the constant introduced in Theorem \ref{thm:existence of solution}. Suppose that there exist $a,H\in(0,\infty)$ such that 
\begin{equation}
\label{holder reg for ito ineq}
\|\Psi_a u\|_{C^\gamma([0,\tau]\times\dR^d)}\leq H \quad \text{almost surely},
\end{equation} 
where $\Psi_a$ is the function introduced in \eqref{def of Psi}. Let $R_0$ be the constant introduced in Assumption \ref{assumption on initial data}. Then, we have the following:
\begin{enumerate}[(i)]
\item \label{key_prop_1}
For $R>R_0\vee1$,
\begin{equation*}
    \dP \left(  \int_0^\tau \left( \int_{\partial Q_R}  u(s,\sigma) d\sigma_R \right)^ l ds = 0 \right) \leq \dP \Big( u(s,x) =0 \quad \text{for all }s\in[0,\tau] \text{ and } x\in Q_R\Big).
\end{equation*}

\item \label{key_prop_2}
For $R>R_0 \vee1$, $\fp,\fq>0$, 
\begin{equation}
\label{condition of r for decay prop}
0 < \fr <  \frac{R}{2}\wedge \left[  \left(2^{-\frac{(\gamma+1)}{\gamma}}H^{-\frac{d}{\gamma}} R^{d-1} \left( \frac{d\pi^{d/2}}{\Gamma(d/2+1)} \right) \right)^{\frac{1}{\gamma+d-1}}\right] \wedge 1,
\end{equation}
and $\alpha\in (0,1)$, there exists a point $ r \in (\fr,2\fr)$ such that 
\begin{equation}
\label{ito ineq}
\begin{aligned}
\dP& \left( \int_0^{\tau} \left( \int_{ \partial Q_{R+r} }   u(s,\sigma) d\sigma_{R+r}  \right)^ l ds \geq \fp^ l \right) \\
&\leq \dP\left( \int_0^{\tau}  \left( \int_{\partial Q_R} u(s,\sigma) d\sigma_R \right)^ l ds \geq \fq^ l \right) \\
&\quad\quad+ N  R^{\alpha\left(\frac{d}{2} + \frac{d(d-1)}{\gamma+d} + \frac{L  (d-1)}{\gamma}\right)} e^{\alpha  a R\left( l - \lambda + \frac{L}{\gamma} \right)} \fr^{-\alpha\left( 1+ l+\frac{d(\gamma+d-1)}{\gamma+d} \right)} \left(\frac{\fq^L }{\fp}\right)^{\alpha l},
\end{aligned}
\end{equation}
where $N=N(\alpha,\gamma,\lambda,a,d,f,H,K,T)>0$ and $L := \frac{\gamma+1}{\gamma l+1} = \frac{\gamma(\gamma+d)+\gamma+d}{\gamma(\gamma\lambda+d)+\gamma+d}>1$.
\end{enumerate}

\end{lemma}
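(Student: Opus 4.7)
Since $u$ is continuous (Theorem~\ref{thm:existence of solution}) and nonnegative, the hypothesis $\int_0^\tau \bigl(\int_{\partial Q_R} u\,d\sigma_R\bigr)^l ds = 0$ is, on the event $A$ under consideration, equivalent to $u\equiv 0$ on $[0,\tau]\times\partial Q_R$; combined with $u_0\equiv 0$ on $Q_R$ (from $R>R_0$), this gives vanishing Cauchy--Dirichlet data on $[0,\tau]\times Q_R$. I would promote this to $u\equiv 0$ on the whole cylinder by a stochastic maximum principle: for each nonnegative $\chi\in C_c^\infty(Q_R)$, solve the exterior Dirichlet problem $\cL^*\varphi_\chi=-\chi$ in $Q_R$ with $\varphi_\chi|_{\partial Q_R}=0$ (the zeroth-order $c$ absorbed by an exponential time weight, with $\varphi_\chi\geq 0$ by the elliptic maximum principle). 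Testing \eqref{eq:sol_int_eq_form} against a smooth truncation of $\varphi_\chi$ and using the boundary vanishing of $u$ to kill the would-be flux term through integration by parts, then running a stopping-time argument at $\sigma_\epsilon:=\inf\{t\leq \tau:\max_{\partial Q_R}u(t,\cdot)>\epsilon\}$ (which equals $\tau$ on $A$), one takes expectation and lets $\epsilon\downarrow 0$ to obtain $\int_0^\tau\dE[(u(s),\chi)\mathbf{1}_A]\,ds=0$. Varying $\chi$ over a countable dense family yields $u\equiv 0$ on $Q_R\times[0,\tau]$ almost surely on $A$.

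\textbf{Part (ii) -- reduction to an annular shell.} For the quantitative estimate, first select $r\in(\fr,2\fr)$ via the coarea identity
\[
\int_\fr^{2\fr}\int_{\partial Q_{R+\rho}}u(t,\sigma)\,d\sigma_{R+\rho}\,d\rho=\int_{\{R+\fr<|x|<R+2\fr\}}u(t,x)\,dx,
\]
from which a mean-value argument supplies an $r$ with $\int_{\partial Q_{R+r}}u\,d\sigma_{R+r}\lesssim \fr^{-1}\int_{\{R+\fr<|x|<R+2\fr\}}u\,dx$. Introduce a smooth radial cutoff $\psi_r$ supported in $\{R<|x|<R+2\fr\}$, equal to $1$ on $\{R+r<|x|<R+2r\}$, with $|D^k\psi_r|\lesssim \fr^{-k}$. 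Setting $\Phi(t):=(u(t),\psi_r)$, the weak form \eqref{eq:sol_int_eq_form} gives $\Phi(t)=\int_0^t(u(s),\cL^*\psi_r)\,ds+M_t$ with quadratic variation $\langle M\rangle_t$ involving $\iint h(u)h(u)\psi_r\psi_r f(x-y)\,dxdyds$, reducing the boundary comparison to a bound on $\Phi$.

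\textbf{Part (ii) -- concave It\^o and conclusion.} I would then apply It\^o's formula to the concave transform $(\Phi+\epsilon)^l$ with $l<1$: concavity produces a nonpositive second-order term whose magnitude is bounded below, using $h\geq K^{-1}u^\lambda$ together with the covariance comparison Lemma~\ref{lemma:lower bound of covariance}, by an integral of $u^{2\lambda}\psi_r^2$ against a convolution kernel. The H\"older bound \eqref{holder reg for ito ineq} compares $u(t,x)$ for $x$ in the shell to its trace on $\partial Q_R$ up to an error of order $\fr^\gamma H e^{aR}$ (the exponential coming from $1/\Psi_a(x)\sim e^{a|x|}$ on the shell), while the shell volume $\lesssim \fr R^{d-1}$ produces the $R$- and $\fr$-power prefactors. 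Lemma~\ref{lemma:integral inequality} converts the natural $L^\lambda$-in-time estimate arising from $h^\lambda$ into the $L^l$-in-time estimate of the statement, and a final Markov inequality applied to the resulting relation between $\int_0^\tau\Phi(s)^l ds$ and $\int_0^\tau\bigl(\int_{\partial Q_R}u\,d\sigma_R\bigr)^l ds$ yields \eqref{ito ineq}.

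\textbf{Main obstacle.} The delicate step is the concave It\^o transform: the exponent must be chosen so that (a) the concavity correction dominates the drift term $(u,\cL^*\psi_r)$ uniformly in $R$, (b) the lower bound on $\langle M\rangle$ coming from $h\geq K^{-1}u^\lambda$ meshes with the target $L^l$-in-time norm through the interpolation of Lemma~\ref{lemma:integral inequality}, and (c) the resulting balance reproduces exactly the exponent $L=(\gamma+1)/(\gamma l+1)$ and the factor $(\fq^L/\fp)^{\alpha l}$ appearing in \eqref{ito ineq}. Tracking the dependence on $R$ and $\fr$ through these estimates while keeping all prefactors sharp enough for the application to Theorem~\ref{thm:cpt_support} is where the technical weight of the argument lies.
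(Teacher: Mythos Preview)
Your approach diverges from the paper's in both parts, and Part~(i) has a genuine gap.

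\textbf{Part (i).} You propose solving $\cL^*\varphi_\chi=-\chi$ in $Q_R$ and testing the weak formulation against $\varphi_\chi$. But $a^{ij},b^i,c$ are $\cP\times\cB(\dR^d)$-measurable (Assumption~\ref{assumptions on coefficients}), so $\varphi_\chi$ would have to be random and time-dependent, whereas Definition~\ref{definition_of_solution} only admits deterministic $\phi\in\cS$. Even if one extends the formulation, $Q_R$ is unbounded and $\varphi_\chi$ will not lie in $\cS$, so truncation at infinity introduces flux terms your sketch does not address. The paper avoids all of this: it relies on Lemma~\ref{prop:estimation of quadratic variation}, whose proof is an explicit It\^o computation with the concrete test function $M^{(\ep)}(x)\psi_n(x)\phi_m(x)e^{-K_1M^{(\ep_1)}(x)}$, to show that a vanishing boundary integral forces the quadratic-variation functional $\int_0^\tau\int(U_R*\tilde U_R)(x)\,f(dx)\,ds$ to vanish; then Lemma~\ref{lemma:lower bound of covariance} converts this into $\int_0^\tau\int|U_R*\varphi|^2\,dx\,ds=0$, and positivity of $(\varphi*\tilde\varphi)(0)$ yields $u\equiv 0$ on $Q_R$. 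Note that the noise lower bound $h\geq K^{-1}u^\lambda$ enters through this quadratic variation; your maximum-principle outline does not invoke it at all.

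\textbf{Part (ii).} The paper does \emph{not} apply a concave It\^o transform to $(\Phi+\epsilon)^l$. It works at the level of probabilities via Chebyshev and Jensen: introduce the stopping time $\kappa$ of \eqref{definition of kappa}, average the target probability over $z\in(\fr,2\fr)$ (your mean-value selection of $r$ is correct in spirit), reduce to bounding $\dE\bigl[(\int_0^\kappa\int_{Q_{R+\fr}\setminus Q_{R+2\fr}}u^\lambda\,dx\,ds)^\alpha\bigr]$ via Lemma~\ref{lemma:integral inequality}\eqref{item:integral inequality 1}, and then bound this by $\dE\bigl[(\int_0^\kappa\int_{\partial Q_R}u\,d\sigma_R\,ds)^\alpha\bigr]$ using Lemma~\ref{lemma:lower bound of covariance} and Lemma~\ref{prop:estimation of quadratic variation}. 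A final application of Lemma~\ref{lemma:integral inequality}\eqref{item:integral inequality 2} converts the time integral to the $l$-th-power form and produces $L=(\gamma+1)/(\gamma l+1)$. The specific exponents in \eqref{ito ineq}---for instance the $R^{\alpha d/2}$ from the Cauchy--Schwarz step \eqref{integral ineq of a function with compact support} and the $\fr^{-\alpha}$ from \eqref{ineq:key ineq in the proof of key lemma 3}---emerge from this chain and would not fall out of a concave-It\^o computation on $\Phi^l$. Thus the engine common to both parts is Lemma~\ref{prop:estimation of quadratic variation}, which you have not identified.
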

\begin{remark}

Using Lemma \ref{ito_prop} \eqref{key_prop_1}, we have that for all $R>R_0 \vee 1 $ and $\theta>\theta'>0$, 
\begin{equation}\label{eq:monotonicity}
	 \dP \left(  \int_0^\tau \left( \int_{\partial Q_{R+\theta}} u(s,\sigma) d\sigma_{R+\theta} \right)^ l ds > 0 \right) \leq  \dP \left(  \int_0^\tau \left( \int_{\partial Q_{R+\theta'}}  u(s,\sigma) d\sigma_{R+\theta'} \right)^ l ds > 0 \right).
\end{equation} Indeed, Lemma \ref{ito_prop} \eqref{key_prop_1} implies that 
\begin{equation*}
\begin{aligned}
	\dP \left(  \int_0^\tau \left( \int_{\partial Q_{R+\theta'}}  u(s,\sigma) d\sigma_{R+\theta'} \right)^ l ds = 0 \right) &\leq \dP \Big( u(s,x) =0 \quad \text{for all }s\in[0,\tau] \text{ and } x\in Q_{R+\theta'}\Big)\\
	&\leq \dP \left(  \int_0^\tau \left( \int_{\partial Q_{R+\theta}} u(s,\sigma) d\sigma_{R+\theta} \right)^ l ds = 0 \right),
\end{aligned}
	\end{equation*} which proves \eqref{eq:monotonicity}.

\end{remark}
\vspace{2mm}

We now provide  the proof of Theorem \ref{thm:cpt_support}. 

\begin{proof}[\bf Proof of Theorem \ref{thm:cpt_support}]
Let $\tau\leq T$ be the given bounded stopping time and $u$ be the nonnegative solution introduced in Definition \ref{definition_of_solution}. For $R > 0$, we set 
\begin{equation*}
\Omega_R := \{ \omega:u(\omega,t,x) = 0\quad \mbox{for all}\quad t\leq \tau,\quad x\in Q_R \}.
\end{equation*}
We show that $\dP(\cup_R \Omega_R) = 1$. Since $\Omega_{R_1}\subset \Omega_{R_2}$ for $R_1 \leq R_2$, it suffices to show
\begin{equation*}
\lim_{R\to\infty}\dP(\Omega_R) = 1.
\end{equation*}
By Lemma \ref{ito_prop} \eqref{key_prop_1}, we have
\begin{equation*}
\dP\left(   \int_0^{\tau}\left( \int_{\partial Q_R}  u(s,\sigma)  d\sigma_R \right)^ l ds = 0 \right) \leq \dP(\Omega_R).
\end{equation*}
Therefore, it suffices to show that
\begin{equation}
\limsup_{R\to\infty} \dP\left( \int_0^{\tau} \left( \int_{\partial Q_R}  u(s,\sigma)  d\sigma_R \right)^ l ds >0 \right) = 0,
\end{equation}
Now, let $\gamma \in (0,\eta/2)$ be a constant where $\eta$ is the constant in Assumption~\ref{assumption on f} and $a > 0$ be fixed. For $n\in\dN$, define a bounded stopping time $\tau_n$ such that
\begin{equation*}
\tau_n := n \,\wedge\, \tau \wedge \, \inf\left\{ t\geq0:\| \Psi_a u \|_{C^\gamma([0,t]\times\dR^d)} \geq n \right\},
\end{equation*}
where $\Psi_a$ is the function introduced in \eqref{def of Psi}. Note that stopping $\tau_n$ is well-defined due to Theorem \ref{thm:existence of solution}. Additionally, notice that $\tau_n\to\tau$ almost surely due to \eqref{holder regularity of the solution}.
Observe that
\begin{equation*}
	\begin{aligned}
		&\dP\left( \int_0^{\tau} \left( \int_{\partial Q_R}  u(s,\sigma)  d\sigma_R \right)^ l ds >0 \right) \\
		& \quad \leq \dP\left( \int_0^{\tau_n} \left( \int_{\partial Q_R}  u(s,\sigma)  d\sigma_R \right)^ l ds >0 \right) + 
		\dP\left( \tau_n < \tau\right).
	\end{aligned}
\end{equation*}
Since $\tau_n\to\tau$ almost surely, it is enough to prove that
\begin{equation}\label{eq: claim in lemma}
\limsup_{R\to\infty} \dP\left( \int_0^{\tau_n} \left( \int_{\partial Q_R}  u(s,\sigma)  d\sigma_R \right)^ l ds >0 \right) = 0.
\end{equation}
For proof of \eqref{eq: claim in lemma}, let us denote $\tau$ instead of $\tau_n$ for simplicity. In addition, we may assume $H>0$  satisfies
\begin{equation}\label{eq:Boundedness of time and the solution}
	\| \Psi_a u\|_{C^\gamma([0,\tau]\times\dR^d)} \leq H.
\end{equation}

Let $\xi\in (0,1)$  be a constant that will be specified later.  Choose $\ep>0$ and set $r_k := \ep (k+1)^{-2}$, $p_k := \xi e^{-k}$ for $k = 0,1,2,\dots$ such that $r_0$ satisfies the condition on $\fr$ in \eqref{condition of r for decay prop}. Let $\alpha \in (0,1 )$. By Lemma \ref{ito_prop} \eqref{key_prop_2} for all $R> R_0 \vee 1 $, there exists $\theta_1\in (r_0,2r_0)$ such that 
\begin{equation*}
\begin{aligned}
&\dP\left( \int_0^{\tau} \left( \int_{\partial Q_{R+\theta_1}}  u(s,\sigma)  d\sigma_{R+\theta_1} \right)^l ds \geq p_1^ l \right) \\
&\quad\leq \dP\left(  \int_0^{\tau}\left( \int_{\partial Q_R}  u(s,\sigma)  d\sigma_R \right)^ l ds \geq p_{0}^ l \right) \\
&\quad\quad+ NR^{\alpha\left(\frac{d}{2} + \frac{d(d-1)}{\gamma+d} + \frac{L  (d-1)}{\gamma}\right)} e^{\alpha  a R\left( l - \lambda + \frac{L}{\gamma} \right)} r_0^{-\alpha\left( 1+ l+\frac{d(\gamma+d-1)}{\gamma+d} \right)}\left(\frac{p_{0}^{L}}{p_{1}}\right)^{\alpha l},
\end{aligned}
\end{equation*}
where $N = N(\alpha,\gamma,\lambda,a, d,f,H, K, T)$ and $L := \frac{\gamma+1}{\gamma l+1} = \frac{\gamma(\gamma+d)+\gamma+d}{\gamma(\gamma\lambda+d)+\gamma+d}>1$. Now, we proceed the iteration as follows: For $k=2,3,\dots$, we take $R+\theta_{k-1}$ instead of $R$ and again apply Lemma \ref{ito_prop} \eqref{key_prop_2} to find $\theta_k$ such that $\theta_{k} - \theta_{k-1}\in (r_{k-1},2r_{k-1})$ and
\begin{equation}
    \label{iteration}
\begin{aligned}
&\dP\left( \int_0^{\tau} \left( \int_{\partial Q_{R+\theta_k}} u(s,\sigma) d\sigma_{R+\theta_k} \right)^ l ds \geq p_k^ l \right) \\
&\quad\leq \dP\left(  \int_0^{\tau} \left( \int_{\partial Q_{R+\theta_{k-1}}} u(s,\sigma) d\sigma_{R+\theta_{k-1}} \right)^ l ds \geq p_{k-1}^ l \right) \\
&\quad\quad+ NR^{\alpha\left(\frac{d}{2} + \frac{d(d-1)}{\gamma+d} + \frac{L  (d-1)}{\gamma}\right)} e^{\alpha  a R\left( l - \lambda + \frac{L}{\gamma} \right)} r_{k-1}^{-\alpha\left( 1+ l+\frac{d(\gamma+d-1)}{\gamma+d} \right)}\left(\frac{p_{k-1}^{L}}{p_k}\right)^{\alpha l},
\end{aligned}
\end{equation}
where $N = N(\alpha,\gamma,\lambda,a,d,f,H, K, T)$. 
Here, we used the fact that 
\begin{equation}\label{eq:upper bound of theta_k}
	0 < R+ \theta_{k} \leq R + 2\sum_{j=0}^k r_j \leq R + \frac{\pi^2\ep}{3} < 5R
\end{equation}
for all $k\geq 1$. Due to $|\theta_k-\theta_{k-1}|\leq 2r_{k-1}=2\ep k^{-2}$ for all $k=1,2,\dots$, there exists $\theta$ such that $\theta_k \rightarrow \theta$ as $k\rightarrow \infty$. From \eqref{iteration}, by summing over $k=1,2,\dots$, we have  
\begin{equation}\label{eq: sum of iteration}
\begin{aligned}
&\dP\left( \int_0^{\tau} \left( \int_{\partial Q_{R+\theta}} u(s,\sigma)  d\sigma_{R+\theta} \right)^ l ds > 0 \right) \\
&\quad \leq \dP\left(  \int_0^{\tau}\left( \int_{\partial Q_{R}} u(s,\sigma) d\sigma_R \right)^ l ds \geq \xi^ l \right) + N_1 R^{\alpha\left(\frac{d}{2} + \frac{d(d-1)}{\gamma+d} + \frac{L  (d-1)}{\gamma}\right)} e^{\alpha  a R\left( l - \lambda + \frac{L}{\gamma} \right)} \xi^{\alpha l(L-1)},
\end{aligned}
\end{equation}
where
$$N_1 := N \sum_{k=1}^\infty  r_{k-1}^{-\alpha\left( 1+ l+\frac{d(\gamma+d-1)}{\gamma+d} \right)}e^{\alpha l(k-(k-1)L)}<\infty.
$$ 
The last inequality holds thanks to $L>1$.
Choose $\delta>\frac{a\left( l-\lambda+\frac{L}{\gamma} \right)}{l(L-1)}$ and set $\xi := e^{-\delta R}$, where $K$ is the constant introduced in Assumptions \ref{assumptions on coefficients} and \ref{assumptions on h}. Then, by Chebyshev's and Jensen's inequalities,
\begin{equation*}
\begin{aligned}
\dP\left( \int_0^\tau\left( \int_{\partial Q_R} u(s,\sigma) d\sigma_R \right)^ l ds \geq \xi^ l \right) &\leq \xi^{- l}\dE \left[\int_0^\tau\left( \int_{\partial Q_R} u(s,\sigma) d\sigma_R \right)^ l ds  \right]\\
&\leq \xi^{- l}N\left(\dE \left[\int_0^\tau \int_{\partial Q_R} u(s,\sigma) d\sigma_R  ds \right]\right)^ l \\
&\leq N\left(\dE \left[\int_0^\tau e^{\delta R}\int_{\partial Q_R}  u(s,\sigma) d\sigma_R ds \right]\right)^ l,
\end{aligned}
\end{equation*}
where $N = N(\gamma,\lambda,d,T)$. Thus, by Lemma \ref{L_1 bound with weight}  (see \eqref{eq:integral bound with weight}), 
\begin{equation}
\label{to control limit1}
\limsup_{R\to\infty}\dP\left( \int_0^\tau\left( \int_{\partial Q_R} u(s,\sigma) d\sigma_R \right)^ l ds \geq \xi^ l \right) = 0.
\end{equation}
In addition, since  $L >1$, we have 
\begin{equation}
\label{to control limit2}
\begin{aligned}
&\limsup_{R\to\infty}R^{\alpha\left(\frac{d}{2} + \frac{d(d-1)}{\gamma+d} + \frac{L  (d-1)}{\gamma}\right)} e^{\alpha  a R\left( l - \lambda + \frac{L}{\gamma} \right)}\xi^{(L-1)\alpha l} \\
&\quad= \limsup_{R\to\infty}R^{\alpha\left(\frac{d}{2} + \frac{d(d-1)}{\gamma+d} + \frac{L  (d-1)}{\gamma}  \right)}  e^{\alpha  a R\left( l - \lambda + \frac{L}{\gamma} \right) - \delta \alpha l (L-1)R} \\
&\quad  = 0.
\end{aligned}
\end{equation}
Therefore, by applying \eqref{to control limit1} and \eqref{to control limit2} to \eqref{eq: sum of iteration}, we have
\begin{equation*}
\begin{aligned}
&\limsup_{R\to\infty}\dP\left( \int_0^\tau\left( \int_{\partial Q_R} u(s,\sigma) d\sigma_R \right)^ l ds > 0 \right) \\
&=\limsup_{R\to\infty}\dP\left( \int_0^\tau\left( \int_{\partial Q_{R+\theta}} u(s,\sigma) d\sigma_R \right)^ l ds > 0 \right) \\
&\leq \limsup_{R\to\infty}\left\{\dP\left( \int_0^\tau\left( \int_{\partial Q_R} u(s,\sigma) d\sigma_R \right)^ l ds \geq \xi^ l \right) + N_1 R^{\alpha\left(\frac{d}{2} + \frac{d(d-1)}{\gamma+d} + \frac{L  (d-1)}{\gamma}\right)} e^{\alpha  a R\left( l - \lambda + \frac{L}{\gamma} \right)} \xi^{\alpha l(L-1)}\right\} \\
&= 0,
\end{aligned}
\end{equation*} 
which completes the proof.

\end{proof}

%%%%%%%%%%%%%%%%%%%%%%%%%%%%%%%%%%%%%%%%%%%%%%%%%%%%%%%%

\section{Proof of Lemma \ref{ito_prop}}\label{subsec:proof of props}

In this section, we provide the proof of Lemma \ref{ito_prop}. To prove Lemma \ref{ito_prop}, we assume the results of Theorem \ref{thm:existence of solution} and Lemma \ref{L_1 bound with weight}, and introduce three auxiliary lemmas; Lemmas \ref{prop:estimation of quadratic variation}, \ref{lemma:integral inequality}, and \ref{lemma:lower bound of covariance}.  Lemma \ref{prop:estimation of quadratic variation} shows that the quadratic variation of the stochastic part in \eqref{eq:sol_int_eq_form} on $Q_R$ is controlled by the integration of the solution $u$ on $\partial Q_R$.

Throughout this section, we assume that there exist $a,H\in(0,\infty)$ satisfying \eqref{holder reg for ito ineq}. For $R>1$, we define 
\begin{equation}
\label{def of M(x)}
M(x) := M_R(x) := (|x| - R)_+
\end{equation}
and choose $K_1$ and $K_2$ such that
\begin{equation}
\label{ineq:condition_of_K1_and_K2}
\begin{aligned}
K_1  >  2K^2\vee a   \quad \text{and} \quad K_2 \geq  K + 4KK_1 + K_1^2K =: N_1.
\end{aligned}
\end{equation} where $K$ is the constant in Assumption~\ref{assumptions on coefficients}.
\begin{lemma}
\label{prop:estimation of quadratic variation}
Let $\tau\leq T$ be a bounded stopping time and $u$ be a nonnegative solution to \eqref{original_equation} introduced in Definition \ref{definition_of_solution}. Suppose $R> R_0 \vee 1$ is a constant, where $R_0$ is the constant introduced in Assumption \ref{assumption on initial data}. Then, for every  $\alpha\in(0,1)$, we have
\begin{equation}
\label{eq:estimation of quadratic variation}
\begin{aligned}
&\dE\left[\left( \int_0^\tau\int_{\dR^d}\int_{\dR^d} e^{-2K_2s-K_1(M(x+y)+M(y))}M(x+y)M(y)(u(s,x+y)u(s,y))^\lambda dy f(dx) ds \right)^{\alpha/2} \right]\\
&\quad \leq N \dE\left[\left(\int_0^\tau \int_{ \partial Q_R } e^{-K_2s}u(s,\sigma) d\sigma_R ds\right)^\alpha\right],\\
\end{aligned}
\end{equation}
where $N := N(\alpha,d,K)$.
\end{lemma}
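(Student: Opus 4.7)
My plan is to apply Itô's formula (via the weak formulation \eqref{eq:sol_int_eq_form} and a Schwartz-class mollification) to the semimartingale $X_t := e^{-K_2 t}(u(t,\cdot), G)$ with the nonnegative exponentially weighted test function
\[ G(y) := e^{-K_1 M(y)}\, M(y), \qquad M(y)=(|y|-R)_+. \]
Since $G$ is supported in $\overline{Q_R}$ and $R>R_0$, one has $X_0 = 0$. Setting $G_\ep := \rho_\ep * G$ for a standard mollifier $\rho_\ep$, I would insert $G_\ep$ into \eqref{eq:sol_int_eq_form} and pass $\ep\downarrow 0$, using the exponential decay of $G$ together with the weighted bound $\Psi_a u\le H$ (made useful by $K_1>a$) to justify dominated convergence in the drift and convergence of the Walsh stochastic integral through the representation \eqref{decomposition of walsh integral}. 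The outcome is the decomposition
\[ X_t = D_t + \tilde M_t,\qquad D_t = \int_0^t e^{-K_2 s}\bigl(u(s,\cdot),\, \cL^* G - K_2 G\bigr)ds, \]
with $\tilde M_t := \int_0^t\!\!\int_{\dR^d} e^{-K_2 s} h(s,y,u(s,y))\, G(y)\, F(ds,dy)$ a continuous local martingale.

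The key computation will be the distributional identity for $\cL^* G$. Because $G\in C^\infty$ off $\partial Q_R$ and $\nabla G$ jumps across $\partial Q_R$ by the outward unit normal $\hat\sigma$ (while $G$ itself is continuous), the distributional Hessian equals $G_{y^i y^j} = \{G_{y^i y^j}\} + \hat\sigma^i \hat\sigma^j\, \delta_{\partial Q_R}$, where $\{\cdot\}$ denotes the classical pointwise derivative on $\dR^d\setminus\partial Q_R$. Since the first-order terms in $\cL^*$ act on a $C^0$ function, no boundary delta arises from them, hence
\[ \cL^* G = \{\cL^* G\} + a^{ij}(s,\sigma)\,\hat\sigma^i\hat\sigma^j\, \delta_{\partial Q_R}. \]
A direct calculation on $Q_R$ using ellipticity and $|a^{ij}|_{C^2}+|b^i|_{C^1}+|c|_C\le K$ will show that the leading interior term $K_1^2 K M e^{-K_1 M}$ is dominated by $K_2 G = K_2 M e^{-K_1 M}$ as soon as $K_2\ge K_1^2 K$, and the remaining lower-order pieces (bounded by constants times $K, K K_1, K K_1^2$) are absorbed by the choice $K_2\ge K+4KK_1+K_1^2 K$ in \eqref{ineq:condition_of_K1_and_K2}, giving $\{\cL^* G\}\le K_2 G$ pointwise. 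Since $a^{ij}\hat\sigma^i\hat\sigma^j\le K$, pairing against $u(s,\cdot)\ge 0$ yields
\[ (u(s,\cdot), \cL^* G-K_2 G)\le K\int_{\partial Q_R} u(s,\sigma)\, d\sigma_R, \]
so that, with $A_t := K\int_0^t e^{-K_2 s}\int_{\partial Q_R} u(s,\sigma)\,d\sigma_R\,ds$, we have $D_t\le A_t$.

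Next I would introduce $Y_t := \tilde M_t + A_t = X_t + (A_t - D_t)\ge 0$, which vanishes at $t=0$. Because $\tilde M_t\ge -A_t$ and $A$ is integrable on $[0,\tau]$, the local martingale $\tilde M$ has integrable negative part and is therefore a supermartingale, so $\dE[Y_\tau]\le\dE[A_\tau]$. Lenglart's domination inequality applied to the nonnegative process $Y$ then gives, for $\alpha\in(0,1)$,
\[ \dE\Bigl[\sup_{t\le\tau} Y_t^\alpha\Bigr]\le \tfrac{2-\alpha}{1-\alpha}\,\dE[A_\tau^\alpha], \]
and combining with $|\tilde M_t|\le Y_t+A_t$ yields $\dE[\sup_{t\le\tau}|\tilde M_t|^\alpha]\le N_\alpha\dE[A_\tau^\alpha]$. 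The Burkholder--Davis--Gundy inequality for continuous local martingales with exponent $\alpha\in(0,1)$ produces $\dE[\langle\tilde M\rangle_\tau^{\alpha/2}]\le N_\alpha\dE[\sup_t|\tilde M_t|^\alpha]\le N_\alpha\dE[A_\tau^\alpha]$. Finally, the Walsh--Dalang quadratic variation formula
\[ \langle\tilde M\rangle_\tau = \int_0^\tau\!\!\int_{\dR^d}\!\!\int_{\dR^d} e^{-2K_2 s}\,h(s,x+y,u)\,G(x+y)\,h(s,y,u)\,G(y)\,f(dx)\,dy\,ds \]
together with the lower bound $h(s,y,u)\ge K^{-1} u^\lambda$ from \eqref{condition of h} gives $\langle\tilde M\rangle_\tau\ge K^{-2}\cdot(\text{integrand of }\eqref{eq:estimation of quadratic variation})$, and taking the $\alpha/2$-moment concludes the estimate with $N=N(\alpha,d,K)$ after absorbing the prefactor $K^\alpha$ in $A_\tau^\alpha$.

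The main obstacle will be the rigorous passage to the limit in the mollification step, in particular showing that as $\ep\downarrow 0$ the integral of $a^{ij}(G_\ep)_{y^i y^j}$ against the continuous nonnegative field $u(s,\cdot)$ really concentrates on $\partial Q_R$ and recovers the surface integral $\int_{\partial Q_R} a^{ij}\hat\sigma^i\hat\sigma^j u(s,\sigma)\,d\sigma_R$ (rather than an average in the normal direction). The H\"older continuity of $u$ guaranteed by Theorem \ref{thm:existence of solution} is what makes this limit valid. A secondary but delicate bookkeeping issue will be ensuring that the interior calculation of $\{\cL^* G\}$ really is dominated by $K_2 G$ under the explicit choice of $K_1,K_2$ in \eqref{ineq:condition_of_K1_and_K2} without introducing $K_1$- or $a$-dependent constants into the final $N$.
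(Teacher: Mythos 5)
Your proposal follows essentially the same route as the paper's proof. Both start from the weak formulation of $X_t = e^{-K_2 t}(u(t,\cdot),G)$ with $G(y)=M(y)e^{-K_1M(y)}$, extract a drift that is dominated by the boundary integral plus a sign-definite interior term, obtain a one-sided domination of the local martingale $\tilde M$ by an increasing process, invoke the Lenglart/Krylov domination inequality for $\alpha\in(0,1)$ (the paper cites Theorem III.6.8 of Krylov's diffusion book, which is the same lemma), and finish with the Burkholder--Davis--Gundy inequality and the lower bound $h\ge K^{-1}u^\lambda$.

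The main presentational difference is how the boundary contribution on $\partial Q_R$ is extracted. You compute the distributional Hessian of $G$ directly and read off a surface delta $\hat\sigma^i\hat\sigma^j\,\delta_{\partial Q_R}$ from the gradient jump, then bound $a^{ij}\hat\sigma^i\hat\sigma^j\le K$. The paper instead mollifies in several layers ($M^{(\ep)}$, $e^{-K_1M^{(\ep_1)}}$, a smoothed radial cutoff $\phi_m$, and a large-scale cutoff $\psi_n$) and passes to the limits one at a time, recovering the same surface integral through an explicit radial change of variables in the terms $\mathbf{A}_3$, $\mathbf{B}_3$, $\mathbf{B}_5$; see \eqref{eq:estimate of B_3} and \eqref{eq:estimate of B_5}, whose sum gives exactly the single boundary term $R^{-2}\int_{\partial Q_R}a^{ij}\sigma^i\sigma^j u\,d\sigma_R\le K\int_{\partial Q_R}u\,d\sigma_R$. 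You correctly flag that making your distributional shortcut rigorous (showing the mollified Hessian paired with $u$ converges to a genuine surface integral and not a normal-averaged quantity) is where the real work lies; the paper spends Steps 1--3 on precisely this. One small imprecision in your interior estimate $\{\cL^*G\}\le K_2G$: the $O(e^{-K_1M})$ terms (constant in $M$, coming from $-2K_1a^{ij}M_{x^i}M_{x^j}e^{-K_1M}$ together with the first-order pieces) are \emph{not} dominated by $K_2 G$ near $\partial Q_R$, where $G\to 0$; they must instead cancel among themselves, and this is where the ellipticity lower bound $a^{ij}M_{x^i}M_{x^j}\ge K^{-1}$ and the choice $K_1>2K^2$ in \eqref{ineq:condition_of_K1_and_K2} enter. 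You do gesture at ellipticity, but the decomposition of $\{\cL^*G\}$ into an $O(G)$ piece controlled by $K_2\ge N_1$ and an $O(e^{-K_1M})$ piece made nonpositive by $K_1>2K^2$ should be stated explicitly. With that repair your argument matches the paper's.
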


\begin{proof}

To consider a weak solution $u$ of \eqref{original_equation} on $Q_R$, we need to choose appropriate test functions. Let $\psi\in C_c^\infty(\dR)$ be a nonnegative symmetric function satisfying $\psi(z) = 1$ on $|z|<1$ and $\psi(z) = 0$ on $|z|\geq2$ and set $\psi_n(x) := \psi_n(|x|) := \psi(|x|/n)$ for $n\in \dN$. Take a nonnegative function $\zeta\in C_c^\infty(\dR)$ satisfying $\int_{\dR}\zeta(z)dz = 1$, symmetric, and $\zeta = 0$ on $|z|\geq1$. Define for $m \in \dN$,
\begin{equation}\label{eq:defintion of phi_m}
	\phi_m (x) := m\int_{\dR} \bi_{|z|>R + \frac{1}{m}}(z) \zeta \left(m \left(|x| - z\right)\right) dz = \int_{\dR} \bi_{\left||x|-\frac{1}{m}z\right|>R + \frac{1}{m}}(z) \zeta \left(z\right) dz.
\end{equation} 
It should be noted that $\phi_m(x)=  0$ on $|x|\leq R$ and $\phi_m(x) \to \bi_{|x|>R}(x)$ as $m\to\infty$. Additionally, observe that
\begin{equation}
\label{phi m = 1}
\phi_m(x) = 1
\end{equation}
on $|x|>R+\frac{2}{m}$. Indeed, for $|x| > R+\frac{2}{m}$ and $|z|<1$, we have $\left| |x| - \frac{1}{m}z \right|>|x| - \frac{|z|}{m}>R+\frac{2}{m} - \frac{|z|}{m}>R+\frac{1}{m}$. Therefore, by the definition of $\phi_m$, \eqref{eq:defintion of phi_m} implies $\eqref{phi m = 1}.$ Moreover, \eqref{phi m = 1} ensures that 
\begin{equation}
\label{derivatives of phi m}
\phi_{mx^i}(x) = \phi_{mx^ix^j}(x) = 0 \quad \text{for $|x|>R+\frac{2}{m}$. }
\end{equation}
Furthermore, if $\left| |x| - \frac{z}{m} \right|>R+\frac{1}{m}$ and $|z|<1$, then we have $R  < \left| |x|-\frac{z}{m} \right| - \frac{1}{m} \leq |x| + \frac{|z|}{m} - \frac{1}{m} \leq  |x|.$ Thus,
\begin{equation}
\label{domain control}
\phi_m(x)\leq \bi_{|x|>R}.
\end{equation}
For $g\in L_{1,loc}(\dR^d)$ and $\ep>0$, set 
\begin{equation}
	\label{def of mollification}
	g^{(\ep)}(x) := \ep^{-d}\int_{\dR^d} g(y)\zeta(\ep^{-1}(x-y)) dy
\end{equation}
Then, for $m,n\in\dN$, $\ep>0$, and $\ep_1>0$, It\^o's formula yields
\begin{equation}
\label{ineq:applying ito's formula}
\begin{aligned}
& \int_{\dR^d} e^{-K_2\tau}u(\tau,x)M^{(\ep)}(x)\psi_n(x)\phi_m(x)e^{-K_1M^{(\ep_1)}(x)} dx \\
& = \int_0^\tau\int_{\dR^d} e^{-K_2s}u(s,x) \cL^* \left(  M^{(\ep)}(x)\psi_n(x) \phi_m(x)e^{-K_1M^{(\ep_1)}(x)} \right)  dxds \\
& \quad -K_2 \int_0^\tau\int_{\dR^d} e^{-K_2s}u(s,x)M^{(\ep)}(x)\psi_n(x) \phi_m(x)e^{-K_1M^{(\ep_1)}(x)}dxds + \fm^{m,n,\ep,\ep_1}_\tau,
\end{aligned}
\end{equation}
where
$$ \fm^{m,n,\ep,\ep_1}_\tau := \int_0^\tau\int_{\dR^d} 
 e^{-K_2s}h(s,x,u(s,x))M^{(\ep)}(x)\psi_n(x) \phi_m(x)e^{-K_1M^{(\ep_1)}(x)} F(ds,dx).
$$ 
For the properties of $\fm^{m,n,\ep,\ep_1}_\tau$, see Remark \ref{random measure} and \eqref{def of covaiance functional}. Note that the quadratic variation of $\fm^{m,n,\ep,\ep_1}_t$ is related to the LHS of \eqref{eq:estimation of quadratic variation}. We will get the inequality \eqref{eq:estimation of quadratic variation} by taking various limits in \eqref{ineq:applying ito's formula} and using some martingale theory. The remaining part of the proof is separated into four steps. From \textbf{Step 1} to \textbf{Step 3}, the limits are taken in $\ep_1\to0$, $\ep\to0$, and then  $m\to\infty$. In \textbf{Step 4}, we take the limit $n\to\infty$ \eqref{ineq:applying ito's formula} and obtain \eqref{eq:estimation of quadratic variation}.

\vspace{1mm}

\textbf{(Step 1)} We consider the case $\ep_1\downarrow0$. For convenience, set 
\begin{equation}
\label{notation Phi for convenience}
\Phi^{m,n,\ep}(x) : =  M^{(\ep)}(x)\psi_n(x)\phi_m(x).
\end{equation}
Then, from \eqref{ineq:applying ito's formula} and \eqref{eq:adjoint}, we have
\begin{equation}
\label{applying adjoint operator to test function}
\begin{aligned}
& \cL^* \left( \Phi^{m,n,\ep}(x)e^{-K_1M^{(\ep_1)}(x)} \right) \\
& = \left( a^{ij}_{x^ix^j} -  b^i_{x^i} + c \right)\Phi^{m,n,\ep}(x)e^{-K_1M^{(\ep_1)}(x)} + \left( 2 a^{ij}_{x^j} - b^i \right)\left( \Phi^{m,n,\ep}(x)e^{-K_1M^{(\ep_1)}(x)} \right)_{x^i}\\
&\quad\quad +  a^{ij} \left( \Phi^{m,n,\ep}(x)e^{-K_1M^{(\ep_1)}(x)} \right)_{x^ix^j} \\
& = \left[ 2 a^{ij}_{x^j} - b^i -2 K_1  a^{ij}M^{(\ep_1)}_{x^j}(x)  \right]\left( \Phi^{m,n,\ep}(x) \right)_{x^i}  e^{-K_1M^{(\ep_1)}(x)} \\
&\quad\quad  + a^{ij} \left(\Phi^{m,n,\ep}(x) \right)_{x^ix^j} e^{-K_1M^{(\ep_1)}(x)} + {\bf D}^{\ep_1}_1(s,x)  \Phi^{m,n,\ep}(x)e^{-K_1M^{(\ep_1)}(x)},
\end{aligned}
\end{equation} 
where 
\begin{equation}
\label{def of D1ep}
\begin{aligned}
{\bf D}^{\ep_1}_1(s,x) &= a^{ij}_{x^ix^j} -  b^i_{x^i} + c -K_1  \left( 2 a^{ij}_{x^j} - b^i \right) M^{(\ep_1)}_{x^i}(x) - K_1  a^{ij} M^{(\ep_1)}_{x^ix^j}(x) + K_1^2  a^{ij} M^{(\ep_1)}_{x^i}(x)M^{(\ep_1)}_{x^j}(x).
\end{aligned}
\end{equation}
To deal with ${\bf D}^{\ep_1}_1(s,x)$, notice that if we let $\ep_1\downarrow0$, then on $|x|>R$,
\begin{equation}\label{eq:convergence of M(x)}
M^{(\ep_1)}(x)\rightarrow M(x), \quad M^{(\ep_1)}_{x^i}(x) \rightarrow M_{x^i}(x), \quad  M^{(\ep_1)}_{x^ix^j}(x)\rightarrow M_{x^ix^j}(x),
\end{equation} 
where 
\begin{equation}\label{eq:derivative of M(x)}
	 M_{x^i}(x) = \frac{x^i}{|x|}, \quad M_{x^ix^j}(x) = \begin{cases}
	 \frac{\delta^{ij}}{|x|} - \frac{x^ix^j}{|x|^3} \quad&\text{if}\quad d \geq 2, \\
	 \hfil 0 \quad&\text{if}\quad d = 1,
	 \end{cases}
\end{equation} 
and $\delta^{ij}$ is the Kronecker delta.
Thus, by applying \eqref{eq:convergence of M(x)} and \eqref{eq:derivative of M(x)} to \eqref{def of D1ep}, on $|x|>R$, we have 
\begin{equation}
\label{limit of D1ep}
\begin{aligned}
&\limsup_{\ep_1\to0}{\bf D}^{\ep_1}_1(s,x)\\
&\quad=a^{ij}_{x^ix^j}  -  b^i_{x^i} + c -K_1 \left( 2 a^{ij}_{x^j} - b^i \right) M_{x^i} - K_1  a^{ij} M_{x^ix^j} + K_1^2  a^{ij} M_{x^i}M_{x^j} \leq N_1,
\end{aligned}
\end{equation}
where $N_1$ is the constant introduced in \eqref{ineq:condition_of_K1_and_K2}.
Therefore, if we take the limit in probability as $\ep_1\rightarrow 0$ in \eqref{ineq:applying ito's formula}, \eqref{limit of D1ep} yields
\begin{equation}\label{nonnegativity of A+B+m}
\begin{aligned}
&\int_{\dR^d} e^{-K_2\tau}u(\tau,x)M^{(\ep)}(x)\psi_n(x)\phi_m(x)e^{-K_1M(x)} dx \\
&\quad \leq \int_0^\tau e^{-K_2 s} \left({\bf{A}}(m,n,\ep) + {\bf{B}}(m,n,\ep) \right) \, ds + \fm_\tau^{m,n,\ep} \,
	\end{aligned}
\end{equation}
where 
\begin{align}
{\bf{A}}(m,n,\ep) &:=  \int_{\dR^d}\left[ 2 a^{ij}_{x^j} - b^i - 2K_1 a^{ij}M_{x^j}(x) \right]  u(s,x)\left(\Phi^{m,n,\ep}(x)\right)_{x^i}e^{-K_1M(x)} dx, \label{def of A} \\
{\bf{B}}(m,n,\ep)&:= \int_{\dR^d} u(s,x)  \left(a^{ij}\left(\Phi^{m,n,\ep}(x)\right)_{x^ix^j} - ( K_2 - N_1 )\Phi^{m,n,\ep}(x) \right)e^{-K_1M(x)} dx,  \label{def of B}\\
\fm_\tau^{m,n,\ep} &:= \int_0^\tau\int_{\dR^d} e^{-K_2s}h(s,x,u(s,x))M^{(\ep)}(x)\psi_n(x) \phi_m(x)e^{-K_1M(x)} F(ds,dx). \label{def of m}
\end{align}
Note that one can show that  $\fm_\tau^{m,n,\ep,\ep_1}$ converges in probability to $\fm_\tau^{m,n,\ep}$ as $\ep_1\downarrow 0$  by the Burkholder-Davis-Gundy inequality, the bounded convergence theorem, and the fact that $\psi_n \in C^\infty_c(\dR^d)$.

\vspace{1mm}

\textbf{(Step 2)} In this step, our aim is to show that 
\begin{equation}\label{eq:estimation of A}
\begin{aligned}
		\limsup_{m\rightarrow \infty}\limsup_{\ep \rightarrow 0} {\bf A}(m,n,\ep) \leq - &K^{-1}K_1  \int_{Q_R}u(s,x) \psi_n(x) e^{-K_1M(x)} dx\\
		&+N \sum_i\int_{Q_R}  u(s,x)M(x)\psi_{nx^i}(x) e^{-K_1M(x)} dx,
\end{aligned}
\end{equation}
where ${\bf A}(m,n,\ep)$ is  in \eqref{def of A}  and $N = N(K)$. We separate ${\bf A}(m,n,\ep)$ into three parts:
\begin{equation*}
	\begin{aligned}
		{\bf A}(m,n,\ep) = {\bf A}_1(m,n,\ep)+{\bf A}_2(m,n,\ep)+{\bf A}_3(m,n,\ep),
	\end{aligned}
\end{equation*}
where
\begin{align*}
&{\bf A}_1(m,n,\ep) := \int_{\dR^d} \left( 2 a^{ij}_{x^j} - b^i - 2K_1 a^{ij}M_{x^j}(x)\right) u(s,x)M_{x^i}^{(\ep)}(x)\psi_n(x) \phi_m(x)e^{-K_1M(x)} dx, \\
&{\bf A}_2(m,n,\ep) :=  \int_{\dR^d} \left( 2 a^{ij}_{x^j} - b^i - 2K_1 a^{ij}M_{x^j}(x)\right) u(s,x)M^{(\ep)}(x)\psi_{nx^i}(x) \phi_m(x)e^{-K_1M(x)} dx, \\
&{\bf A}_3(m,n,\ep) :=  \int_{\dR^d} \left( 2 a^{ij}_{x^j} - b^i - 2K_1 a^{ij}M_{x^j}(x)\right) u(s,x)M^{(\ep)}(x)\psi_n(x) \phi_{mx^i}(x)e^{-K_1M(x)} dx. \\
\end{align*}
We use  \eqref{eq:derivative of M(x)}, Assumption \ref{assumptions on coefficients}, and \eqref{ineq:condition_of_K1_and_K2} to get that 
\begin{equation}\label{eq:estimation of A_1}
\begin{aligned}
&\limsup_{m\to\infty}\limsup_{\ep\rightarrow 0}{\bf A}_1(m,n,\ep) \\
&\quad = \limsup_{m\to\infty}\int_{\dR^d}u(s,x) \left[ 2 a^{ij}_{x^j}\frac{x^i}{|x|} -  b^i\frac{x^i}{|x|} - 2K_1 a^{ij}\frac{x^ix^j}{|x|^2} \right]\psi_n(x) \phi_{m}(x) e^{-K_1M(x)}  d x \\
&\quad\leq \left[ 2K - 2K^{-1} K_1 \right] \limsup_{m\to\infty}\int_{\dR^d}u(s,x) \psi_n(x) \phi_m(x)e^{-K_1M(x)} dx \\
&\quad\leq - K^{-1}K_1  \int_{Q_R}u(s,x) \psi_n(x) e^{-K_1M(x)} dx.
\end{aligned}
\end{equation} 
In addition, since $|M_{x^i}| \leq 1 $ (see \eqref{eq:derivative of M(x)}), we have
\begin{equation}\label{eq:estimation of A_2}
	\begin{aligned}
		\limsup_{m\to\infty}&\limsup_{\ep\rightarrow 0}{\bf A}_2(m,n,\ep)\\
		& = \limsup_{m\to\infty}\int_{\dR^d}\left[ 2 a^{ij}_{x^j} - b^i - 2K_1 a^{ij}M_{x^j}(x) \right]  u(s,x)M(x)\psi_{nx^i}(x) \phi_m(x)e^{-K_1M(x)} dx\\
		& \leq [2K+2KK_1]\limsup_{m\to\infty} \int_{\dR^d}  u(s,x)M(x) \left| \psi_{nx^i}(x) \right| \phi_m(x)e^{-K_1M(x)} dx \\
		& \leq N \int_{Q_R}  u(s,x)M(x)  \left| \psi_{nx^i}(x) \right| e^{-K_1M(x)} dx,
	\end{aligned}
\end{equation} 
where $N = N(K)$.
To control ${\bf A}_3(m,n,\ep)$, observe that \eqref{derivatives of phi m} and \eqref{domain control} imply
\begin{equation}
\label{eq:derivation of estimation of A_3} 
	\begin{aligned}
&\limsup_{\ep \rightarrow 0} \left| {\bf A}_3(m,n,\ep) \right| \\
% &\quad\leq [2K+2KK_1] \int_{\dR^d} u(s,x)M(x)\psi_n(x)  |\phi_{mx^i}(x)|   e^{-K_1M(x)} dx \\
&\quad \leq [2K+2KK_1]m\int_{|x|<R+\frac{2}{m}} u(s,x) M(x)\psi_n(x) \left|\frac{x^i}{|x|}\int_{\dR}\bi_{||x|-\frac{1}{m}z|>R+\frac{1}{m}}\zeta'(z)dz\right| e^{-K_1 M(x)}dx \\
&\quad\leq Nm \int_{|x|<R+\frac{2}{m}} u(s,x)M(x) \psi_n(x)\bi_{|x|>R}e^{-K_1M(x)}dx \\
% &\quad= Nm \int_R^{R+\frac{2}{m}}\int_{\partial Q_\rho} u(s,\sigma) (\rho-R) \psi_n(\rho) e^{-K_1(\rho-R)} d\sigma_\rho d\rho \\
&\quad= Nm^{-1}\int_0^2\int_{\partial Q_{R+m^{-1}\rho}} u(s,\sigma)\, \rho \, \psi_n(R+m^{-1}\rho)  e^{-m^{-1}K_1\rho} d\sigma_{R+m^{-1}\rho} d\rho,
	\end{aligned}
\end{equation} 
where $N = N(d,K)$. Since all terms in the last integral are bounded and continuous almost surely, we have 
\begin{equation}\label{eq:estimation of A_3}
	\limsup_{m\rightarrow \infty} \limsup_{\ep \rightarrow 0}\left| {\bf A}_3(m,n,\ep) \right| =0.
\end{equation} 
By combining \eqref{eq:estimation of A_1}, \eqref{eq:estimation of A_2}, and \eqref{eq:estimation of A_3}, we have \eqref{eq:estimation of A}. 

\vspace{1mm}

\textbf{(Step 3)} Now we control ${\bf B}(m,n,\ep)$. In this step, we show that for fixed $n > R$,
\begin{equation}\label{eq:estimation of B}
\begin{aligned}
&\limsup_{m\rightarrow \infty}\limsup_{\ep \rightarrow 0} {\bf B}(m,n,\ep) \\
&\quad\leq N \int_{Q_R} u(s,x) \left(|M_{x^i}(x)\psi_{nx^j}(x)| + M(x)|\psi_{nx^ix^j}(x)| \right) e^{-K_1M(x)}dx \\
& \quad\quad + 2K\int_{Q_R} u(s,x) \psi_n(x) e^{-K_1M(x)}dx + K\int_{\partial Q_R} u(s,\sigma)d\sigma_R,
\end{aligned}
\end{equation} 
where $N:=N(d,K)$ and ${\bf B}(m,n,\ep)$ are introduced in \eqref{def of B}.
We decompose ${\bf B}(m,n,\ep)$ as 
\begin{equation}
	\begin{aligned}
		{\bf B}(m,n,\ep) = {\bf B}_1(m,n,\ep)+{\bf B}_2(m,n,\ep)+{\bf B}_3(m,n,\ep)+{\bf B}_4(m,n,\ep)+{\bf B}_5(m,n,\ep)+{\bf B}_6(m,n,\ep),
	\end{aligned}
\end{equation} 
where
\begin{equation*}
\begin{aligned}
&{\bf B}_1(m,n,\ep) := \left( N_1 - K_2 \right)\int_{\dR^d} u(s,x)  M^{(\ep)}(x) \psi_n(x)\phi_m(x) e^{-K_1M(x)}dx, \\
&{\bf B}_2(m,n,\ep) :=  \int_{\dR^d} u(s,x) a^{ij}(s,x) M^{(\ep)}_{x^ix^j}(x) \psi_n(x) \phi_m(x)e^{-K_1M(x)}dx, \\
&{\bf B}_3(m,n,\ep) := 2\int_{\dR^d} u(s,x)  a^{ij}(s,x) M^{(\ep)}_{x^i}(x)\psi_n(x) \phi_{mx^j}(x)e^{-K_1M(x)} dx, \\
&{\bf B}_4(m,n,\ep) := 2\int_{\dR^d} u(s,x)  a^{ij}(s,x) M^{(\ep)}(x) \psi_{nx^i}(x) \phi_{mx^j}(x)e^{-K_1M(x)} dx, \\
&{\bf B}_5(m,n,\ep) := \int_{\dR^d} u(s,x) a^{ij}(s,x) M^{(\ep)}(x)\psi_n(x) \phi_{mx^ix^j}(x) e^{-K_1M(x)}dx, \\
&{\bf B}_6(m,n,\ep) := \int_{\dR^d} u(s,x) a^{ij}(s,x) \left(2M^{(\ep)}_{x^i}(x)\psi_{nx^j}(x) + M^{(\ep)}(x)\psi_{nx^ix^j}(x) \right)\phi_m(x) e^{-K_1M(x)}dx.
\end{aligned}
\end{equation*}
In the case of ${\bf B}_1(m,n,\ep)$, by the choice of $K_2$ (see \eqref{ineq:condition_of_K1_and_K2}), we have 
\begin{equation}\label{eq:estimate of B_1}
	\begin{aligned}
		\limsup_{m\to\infty}\limsup_{\ep \rightarrow 0}{\bf B}_1(m,n,\ep) 
		\leq 0.
	\end{aligned}
\end{equation}
In the case of ${\bf B}_2(m,n,\ep)$, \eqref{eq:convergence of M(x)} yield
\begin{equation}\label{eq:estimate of B_2}
\begin{aligned}
&\limsup_{m\to\infty}\limsup_{\ep \rightarrow 0} \left|{\bf B}_2(m,n,\ep)\right| \\
&\quad = \limsup_{m\to\infty}\int_{\dR^d}u(s,x)\left| a^{ij}(s,x)\left( \frac{\delta^{ij}}{|x|} - \frac{x^ix^j}{|x|^3}  \right)\right|\psi_n(x)\phi_m(x)e^{-K_1M(x)}dx\\
&\quad \leq  2K \int_{Q_R} u(s,x) \psi_n(x) e^{-K_1M(x)}dx.
\end{aligned}
\end{equation}
To bound ${\bf B}_3(m,n,\ep)$, we mimic the estimation for ${\bf A}_3(m,n,\ep)$; see \eqref{eq:derivation of estimation of A_3}. Then, we have 
\begin{equation*}
\begin{aligned}
& \lim_{\ep \rightarrow 0} {\bf B}_3(m,n,\ep) \\
&\quad =  2 m \int_{|x|<R+\frac{2}{m}} u(s,x) a^{ij}(s,x) \frac{x^ix^j}{|x|^2} \psi_n(x) \int_{\dR} \bi_{\left| |x|-\frac{1}{m}z \right|>R+\frac{1}{m}}\zeta'(z) dz e^{-K_1M(x)}dx\\
&\quad =  2 m \int_0^{R+\frac{2}{m}}\int_{\partial Q_\rho} u(s,\sigma) a^{ij}(s,\sigma) \frac{\sigma^i\sigma^j}{\rho^2} \psi_n(\rho) \int_{\dR} \bi_{\left| \rho-\frac{1}{m}z \right|>R+\frac{1}{m}}\zeta'(z) dz e^{-K_1(\rho-R)}d\sigma_{R+\frac{\rho}{m}}d\rho\\
&\quad =  2  \int_{0}^{2}\int_{\partial Q_{R+\frac{\rho}{m}}} u(s,\sigma) a^{ij}(s,\sigma) \frac{\sigma^i\sigma^j}{(R+\rho m^{-1})^2}   d\sigma_{R+\frac{\rho}{m}} \psi_n\left( R+\frac{\rho}{m} \right) \zeta(\rho-1) e^{-K_1\rho/m} d\rho.
% &\red\quad \leq  N \int_{|x|<R+\frac{2}{m}} u(s,x) |a^{ij} |  \left| M_{x^i}(x) \right| \psi_n(x) \phi_{mx^j}(x) e^{-K_1M(x)}dx\\
% &\quad\leq Nm \int_{|x|<R+\frac{2}{m}} u(s,x)   \bi_{|x|>R}(x) e^{-K_1M(x)}dx \\
% &\quad\leq N\int_0^2 \int_{\partial Q_{R+m^{-1}\rho}} u(s,\sigma) e^{-K_1m^{-1}\rho} d\sigma_{R+m^{-1}\rho} d\rho, 
\end{aligned}
\end{equation*} 
By taking the limit $m\rightarrow \infty$, we have
\begin{equation}\label{eq:estimate of B_3}
\begin{aligned}	
 \lim_{m\rightarrow \infty}\lim_{\ep \rightarrow 0} \, {\bf B}_3(m,n,\ep) &
= 2R^{-2}\psi_n(R)\int_{\partial Q_R} u(s,\sigma) a^{ij}(s,\sigma)\sigma^i\sigma^j d\sigma_R\\
&  = 2R^{-2}\int_{\partial Q_R} u(s,\sigma) a^{ij}(s,\sigma)\sigma^i\sigma^j d\sigma_R,
\end{aligned}	
\end{equation} 
since $\psi_n(R)=1$ for $n > R$.
In the case of ${\bf B}_4(m,n,\ep)$, note that
\begin{equation*}
\begin{aligned}
&\limsup_{\ep \rightarrow 0} \left| {\bf B}_4(m,n,\ep) \right| \\
&\quad \leq  N \int_{|x|<R+\frac{2}{m}} u(s,x) \left| a^{ij}(s,x) \right|  M(x) |\psi_{nx^i}(x)| \left| \phi_{mx^j}(x) \right| e^{-K_1M(x)}dx\\
&\quad\leq Nm \int_{|x|<R+\frac{2}{m}} u(s,x)  M(x) |\psi_{nx^i}(x)| \bi_{|x|>R}(x) e^{-K_1M(x)}dx \\
&\quad\leq Nm^{-1}\int_0^2 \int_{\partial Q_{R+m^{-1}\rho}} u(s,\sigma)  d\sigma_{R+m^{-1}\rho} \left| \psi_{nx^i}(R+m^{-1}\rho) \right|\, \rho e^{-K_1m^{-1}\rho}  d\rho, 
\end{aligned}
\end{equation*} 
where $N:=N(K)$. By taking the limit $m\rightarrow \infty$, we have
\begin{equation}\label{eq:estimate of B_4}
	\begin{aligned}	
		\limsup_{m\rightarrow \infty}\limsup_{\ep \rightarrow 0} &\, \left| {\bf B}_4(m,n,\ep) \right|
		= 0.
			\end{aligned}	
\end{equation} 
To control ${\bf B}_5(m,n,\ep)$, recall that $\phi_{mx^ix^j}(x) =0 $ for  $|x|>R+\frac{2}{m}$ (see \eqref{derivatives of phi m} and \eqref{domain control}).   Thus, similar to the estimation for  ${\bf B}_3(m,  n, \epsilon)$,  we get 
\begin{equation*}
\begin{aligned}
&\lim_{\ep\rightarrow 0}{\bf B}_5(m,n,\ep) \\
& = m^2 \int_{|x|<R+\frac{2}{m}} u(s,x) a^{ij}(s,x)\frac{x^ix^j}{|x|^2} M(x) \psi_n(x) \int_{\dR} \bi_{\left| |x|-\frac{1}{m}z \right|>R+\frac{1}{m}} \zeta''(z) dz e^{-K_1M(x)}dx\\
&\quad + m \int_{|x|<R+\frac{2}{m}} u(s,x) a^{ij}(s,x) \left( \frac{\delta^{ij}}{|x|} - \frac{x^ix^j}{|x|^3} \right) M(x) \psi_n(x) \int_{\dR} \bi_{\left| |x|-\frac{1}{m}z \right|>R+\frac{1}{m}} \zeta'(z) dz e^{-K_1M(x)}dx\\
& = m^2 \int_0^{R+\frac{2}{m}}\int_{\partial Q_\rho} u(s,\sigma) a^{ij}(s,\sigma)\frac{\sigma^i \sigma^j}{\rho^2} d\sigma_{\rho} M(\rho) \psi_n(\rho) \int_{\dR} \bi_{\left| \rho-\frac{1}{m}z \right|>R+\frac{1}{m}} \zeta''(z) dz e^{-K_1M(\rho)}d\rho\\
&\quad + m \int_0^{R+\frac{2}{m}}\int_{\partial Q_\rho} u(s,\sigma) a^{ij}(s,\sigma) \left( \frac{\delta^{ij}}{\rho} - \frac{\sigma^i\sigma^j}{\rho^3} \right) d\sigma_{\rho} M(\rho) \psi_n(\rho) \int_{\dR} \bi_{\left| \rho-\frac{1}{m}z \right|>R+\frac{1}{m}} \zeta'(z) dz e^{-K_1M(\rho)}d\rho\\
& = \int_{0}^2\int_{\partial Q_{R+\frac{1}{m}\rho} } u(s,\sigma) a^{ij}(s,\sigma)\sigma^i \sigma^j d\sigma_{R+m^{-1}\rho} \psi_n\left(R+m^{-1}\rho\right) \zeta'(\rho-1) \frac{\rho}{(R+m^{-1}\rho)^2} e^{-K_1M(R+m^{-1}\rho)} d\rho\\
&\quad + m^{-1} \int_{0}^{2}\int_{\partial Q_{R+\frac{1}{m}\rho}} u(s,\sigma) a^{ij}(s,\sigma) \left( \frac{\delta^{ij}}{R+m^{-1}\rho} - \frac{\sigma^i\sigma^j}{(R+m^{-1}\rho)^3} \right) d\sigma_{\rho} \\
&\quad\quad\quad\times \psi_n\left( R+m^{-1}\rho \right) \zeta(\rho-1)  \rho  e^{-K_1M(R+m^{-1}\rho)} d\rho.\\
\end{aligned}
\end{equation*} 
The bounded convergence theorem implies that 
\begin{equation}\label{eq:estimate of B_5}
\begin{aligned}
\lim_{m\rightarrow \infty } \lim_{\ep\rightarrow 0} {\bf B}_5(m,n,\ep) &= - R^{-2} \psi_n(R) \int_{\partial Q_R}u(s,\sigma) a^{ij}(s,\sigma)\sigma^i\sigma^j d\sigma_R\\
&= - R^{-2}  \int_{\partial Q_R}u(s,\sigma) a^{ij}(s,\sigma)\sigma^i\sigma^j d\sigma_R.
\end{aligned}
\end{equation} 
In the case of ${\bf B}_6(m,n,\ep)$, by \eqref{eq:convergence of M(x)}, we have
\begin{equation}\label{eq:estimate of B_6}
	\begin{aligned}
		&\limsup_{m\rightarrow \infty } \limsup_{\ep\rightarrow 0} \left|{\bf B}_6(m,n,\ep)\right|\\
		&\quad\leq N \int_{Q_R} u(s,x) \left(2|M_{x^i}(x)\psi_{nx^j}(x)| + M(x)|\psi_{nx^ix^j}(x)| \right) e^{-K_1M(x)}dx, 
	\end{aligned}
\end{equation} where $N:=N(K)$. Collecting \eqref{eq:estimate of B_1}, \eqref{eq:estimate of B_2}, \eqref{eq:estimate of B_3}, \eqref{eq:estimate of B_2}, \eqref{eq:estimate of B_4}, \eqref{eq:estimate of B_5} and \eqref{eq:estimate of B_6}, we complete the proof of \eqref{eq:estimation of B}.

\vspace{1mm}

\textbf{(Step 4)} In this step, we consider the limit $n\to\infty$. Before we let $n\to\infty$, notice that $\fm_\tau^{m,n,\ep}$ converges in probability to a local martingale 
\begin{equation*}
\begin{aligned}
\fm^{n}_\tau := \int_0^\tau\int_{Q_R} e^{-K_2s}h(s,x,u(s,x))M(x)\psi_n(x) e^{-K_1M(x)} F(ds,dx)
\end{aligned}
\end{equation*}
as $\ep\downarrow 0$ and $m\to\infty$ due to the same reason as in the end of {\bf  (Step 1)}. 

By the choice of $K_1$, we get 
\begin{equation}
 	(-K^{-1}K_1 + 2K) \int_{Q_R}u(s,x) \psi_n(x) e^{-K_1M(x)} dx \leq 0;
 \end{equation}
 see \eqref{ineq:condition_of_K1_and_K2}. Then, by applying \eqref{eq:estimation of A} and \eqref{eq:estimation of B} to \eqref{nonnegativity of A+B+m}, we have
\begin{equation}\label{eq:nonnegativity of sum}
	\begin{aligned}
		0\leq \int_{Q_R} e^{-K_2\tau}u(\tau,x)M(x)\psi_n(x)e^{-K_1M(x)} dx \leq \fA^n_\tau+\fB_\tau+ \fm^{n}_\tau,
	\end{aligned}
\end{equation} 
where 
\begin{equation}
\label{int frA} 
	\begin{aligned}
		\fA^n_\tau := &Nn^{-1}\int_0^\tau\int_{Q_R} e^{-K_2s}u(s,x)\bigg[ |\psi_{x^i}(x/n)| M(x)\\
		&\quad\quad\quad\quad\quad\quad
		 + \left(|M_{x^i}(x)\psi_{x^j}(x/n)| + M(x)|\psi_{x^ix^j}(x/n)| \right) \bigg] e^{-K_1M(x)}dxds
	\end{aligned}
\end{equation} 
and
\begin{equation*}
	\begin{aligned}
		\fB_t:=  N(d,K) \int_0^\tau e^{-K_2s}\int_{\partial Q_R} u(s,\sigma)d\sigma_Rds.
	\end{aligned}
\end{equation*} 
Recall that $\psi_n(x)=\psi(|x|/n)$. Since $\psi_{x^i}, \psi_{x^ix^j}$, and $\sup_{s\leq \tau,x\in\dR^d} \Psi_{a}(x)u(s,x)$ are bounded, and $e^{-(K_1-a)M(x)}$ has exponential decay as $|x|\to\infty$, the integral \eqref{int frA} is finite. 
Note that for any arbitrary stopping time $\kappa \leq \tau$, we have $(\fm^n_\kappa)_- \leq \fA^n_\kappa + \fB_\kappa $ from \eqref{eq:nonnegativity of sum}. Since $\fm^n_t$ is a local martingale, for any fixed $n$, one can choose a sequence of stopping times $\{\tau^i\}_i$ such that $\tau^i\rightarrow \infty$ and $\fm^n_{t \wedge\tau^i}$ is a martingale on $[0,T]$ for each $i\in \dN$. Therefore, $\fm^n_{t\wedge\tau^i \wedge \kappa }$ is a martingale by the optional sampling theorem, and thus implies that $\dE[\fm^n_{t\wedge \tau^i \wedge \kappa}]=0$ for any $t\leq T$. This ensures that $\dE[(\fm^n_{t\wedge \tau^i \wedge \kappa})_-] = \dE[(\fm^n_{t\wedge \tau^i \wedge \kappa})_+]$. Thus, we have 
\begin{equation*}
\dE[|\fm^n_{ \tau^i \wedge \kappa} |] \leq 2 \dE[\fA^n_\kappa + \fB_\kappa],
\end{equation*} recalling that $\fA^n_t+\fB_t$ is nondecreasing in $t$. Therefore, Fatou's lemma shows that 
\begin{equation*}
	\dE[|\fm^n_{ \kappa} |] \leq 2 \dE[\fA^n_{\kappa}+\fB_{\kappa}].
\end{equation*} Note that the above inequality holds for an arbitrary stopping time $\kappa\leq \tau$. Since $\fA^n_{\kappa}$ is nonnegative, by It\^o's inequality (e.g. \cite[Theorem III.6.8]{diffusion}), for $\alpha\in(0,1)$, we have
\begin{equation}
	\begin{aligned}
		\dE \left[\sup_{t\leq \tau} |\fm^n_t|^\alpha \right] \leq 2^\alpha \frac{2-\alpha}{1-\alpha} \dE\left[ \sup_{t\leq \tau} | \fA^n_t + \fB_t|^\alpha \right].
	\end{aligned}
\end{equation} 
Again recall that $\fA^n_t$ and $\fB_t$ are nondecreasing in $t$, and $\sup_{t\leq \tau, x\in \dR^d}\Psi_{a}(x)u(t,x)$ is bounded almost surely. These show that 
\begin{equation}
	\limsup_{n\rightarrow \infty} \dE\left[ \sup_{t\leq \tau} | \fA^n_t + \fB_t|^\alpha \right] \leq \dE \left [ |\fB_\tau|^\alpha \right].
\end{equation} On the other hand, by the Burkholder-Davis-Gundy inequality (e.g. \cite[Theorem IV.4.1]{diffusion}) with \eqref{condition of h} and passing to the limit as $n\rightarrow \infty$, we can see that
\begin{equation*}
\begin{aligned}
&\dE\left[\left( \int_0^\tau\int_{Q_R}\int_{Q_R} e^{-2K_2s-K_1(M(x+y)+M(y))}M(x+y)M(y)(u(s,x+y)u(s,y))^\lambda  dyf(dx)ds \right)^{\alpha/2} \right]\\
&\quad\leq N \cdot \limsup_{n\rightarrow \infty}\dE \left[\sup_{t\leq \tau} |\fm^n_t|^\alpha \right]\leq N\dE \left [ |\fB_\tau|^\alpha \right],
\end{aligned}
\end{equation*}
 where $N = N(\alpha,d,K)$.
This completes the proof of \eqref{eq:estimation of quadratic variation}.

\end{proof}

Next we present integral inequalities. In the proof of Lemma \ref{ito_prop}, these inequalities are used to adjust the exponent of integrals.

\begin{lemma}\label{lemma:integral inequality}

Let $\gamma,\lambda\in(0,1)$, $0\leq a<b<\infty$, and $H>1$. 
\begin{enumerate}[(i)]

\item  \label{item:integral inequality 1}
Suppose $g: \dR^d \to \dR$ is a bounded, nonnegative, and  continuous function such that
\begin{equation}
\label{holder regularity condi for reverse jensen}
g(x)\leq H \quad\text{and}\quad |g(x) - g(y)| \leq H|x-y|^\gamma\quad\text{for}\quad x,y\in \dR^d.
\end{equation}
Then, for $R>1$ and $r>0$ satisfying
\begin{equation}
\label{condition of r for integral inequality}
0 < r < \frac{R}{b}\wedge \left[ \frac{1}{b-a} \left(2^{-\frac{(\gamma+1)}{\gamma}}H^{-\frac{d}{\gamma}} R^{d-1} \left( \frac{d\pi^{d/2}}{\Gamma(d/2+1)} \right) \right)^{\frac{1}{\gamma+d-1}}\right],
\end{equation}
we have 
\begin{equation}\label{eq:integral inequality w.r.t. x}
\begin{aligned}
&\left( \int_{R+ar<|x|<R+br}g(x)dx \right)^{\frac{\gamma\lambda+d}{\gamma+d}} \leq N  R^{\frac{d(d-1)}{\gamma+d}} (r(b-a))^{\frac{-d(\gamma+d-1)}{\gamma+d}} \int_{R+ar<|x|<R+br}(g(x))^\lambda dx,
\end{aligned}
\end{equation}
where $N = N(\gamma,\lambda,H)$.

\item\label{item:integral inequality 2}
Let $T\in(0,\infty)$. Suppose $g:\dR_+ \to \dR$ is a nonnegative and  continuous function on $\dR$ such that
$$ |g(t) - g(s)| \leq H|t-s|^\gamma\quad\text{for}\quad t,s\in (0,\infty).
$$
If $g(0) = 0$, we have 
\begin{equation}\label{eq:integral inequality w.r.t. t}
\left( \int_0^T g(t)dt \right)^{\frac{\gamma\lambda+1}{\gamma+1}} \leq N  H^{1/\gamma}  \int_0^T (g(t))^\lambda \,dt,
\end{equation}
\end{enumerate}
where $N = N(\gamma,\lambda)$.
\end{lemma}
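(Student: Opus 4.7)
The plan is to treat both inequalities as reverse-Jensen bounds that exploit Hölder regularity: if $g$ attains value $M$ at a point, Hölder continuity forces $g \geq M/2$ on a ball of radius $\rho := (M/(2H))^{1/\gamma}$ about that point, which yields a geometric lower bound on $\int g^\lambda$ in terms of $M$. Combining this with the trivial interpolation $\int g \leq M^{1-\lambda}\int g^\lambda$ then eliminates $M$ and produces the claim.

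For part \eqref{item:integral inequality 2}, I let $M := \sup_{[0,T]}g$ (the case $M=0$ is trivial) and fix $t_0 \in [0,T]$ with $g(t_0)=M$. Hölder continuity together with $g(0)=0$ forces $M \leq H t_0^\gamma$, so $t_0 \geq (M/H)^{1/\gamma} > \rho$. Hence $[t_0-\rho,t_0]\subset[0,T]$ and $g \geq M/2$ on this interval, giving
\begin{equation*}
\int_0^T g^\lambda(t)\,dt \;\geq\; (M/2)^\lambda\rho \;=\; c_{\gamma,\lambda}\,M^{(\gamma\lambda+1)/\gamma}H^{-1/\gamma}.
\end{equation*}
Raising $\int_0^T g\,dt \leq M^{1-\lambda}\int_0^T g^\lambda\,dt$ to the exponent $(\gamma\lambda+1)/(\gamma+1)$ and eliminating $M$ via the above lower bound yields the claim (in fact with a sharper factor $H^{(1-\lambda)/(\gamma+1)}$, which is dominated by $H^{1/\gamma}$ since $H\geq 1$).

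For part \eqref{item:integral inequality 1}, the same strategy applies but the geometry is subtler since $g$ has no boundary condition on $\partial A$, with $A := \{R+ar<|x|<R+br\}$. I let $M:=\sup_{\bar A}g$, pick $x_0\in\bar A$ attaining the maximum, and set $\rho := (M/(2H))^{1/\gamma}$, $\rho^* := \min(\rho,\, r(b-a)/2)$. The geometric step I would establish is: $x_0$ can be shifted radially to a point $x_0'\in A$ with $|x_0'-x_0|=\rho^*/2$ such that $B(x_0',\rho^*/2)\subset A\cap B(x_0,\rho^*)$; the hypothesis $r<R/b$ places $A$ inside $\{|x|\leq 2R\}$, so spherical curvature is negligible for this construction, while $\rho^* \leq r(b-a)/2$ guarantees the annulus width accommodates the shifted ball. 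Hölder continuity gives $g\geq M/2$ on $B(x_0',\rho^*/2)$, hence $\int_A g^\lambda\,dx \geq c_d\, M^\lambda (\rho^*)^d$. A case split on whether $\rho^*=\rho$ or $\rho^*=r(b-a)/2$ produces two lower bounds, one proportional to $M^{(\gamma\lambda+d)/\gamma}H^{-d/\gamma}$ and the other to $M^\lambda(r(b-a))^d$. Combining each with the trivial upper bound $\int_A g\,dx \leq M|A| \leq C_d\, M R^{d-1}r(b-a)$ and the interpolation $\int g \leq M^{1-\lambda}\int g^\lambda$, one eliminates $M$ to obtain the stated inequality.

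The main obstacle is the algebraic bookkeeping required to merge the two regimes into the single bound with exponents $d(d-1)/(\gamma+d)$ on $R$ and $-d(\gamma+d-1)/(\gamma+d)$ on $r(b-a)$. Condition \eqref{condition of r for integral inequality}, which can be rewritten as $H^{d/\gamma}(r(b-a))^{\gamma+d-1} \leq C_d\,R^{d-1}$, is precisely the compatibility hypothesis ensuring that the two lower-bound regimes fit together and that any extraneous cross term involving $H$, $R$, and $r(b-a)$ is absorbed into the constant $N(\gamma,\lambda,H)$.
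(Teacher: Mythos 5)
Your plan shares the paper's key idea (H\"older regularity forces a quantitative local lower bound near the supremum, which reverse-Jensens into a lower bound on $\int g^\lambda$), but it replaces the paper's scaling change of variables with a direct argument in terms of $M:=\sup g$. The paper instead first normalizes to $\int g=1$ (Step 1), splits into $\sup g<1$ and $\sup g\geq 1$, and then applies the normalized estimate to the rescaled function $\cG(x):=G^{-\gamma}g(Gx)$ with $G:=(\int g)^{1/(\gamma+d)}$ (Step 2); the $\wedge/\vee$ between $(4H)^{-1/\gamma}$ and $r(b-a)/G$ plays the role of your two regimes. Your part (ii) is carried out correctly. Two comments on part (i).

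First, the worry about spherical curvature is unnecessary. Setting $\rho^{**}:=\rho^*/2$ and letting $x_0'$ be the radial projection of $x_0$ onto the nonempty set $\{R+ar+\rho^{**}\leq |x|\leq R+br-\rho^{**}\}$ (nonempty since $\rho^*\leq r(b-a)/2$), the inclusion $B(x_0',\rho^{**})\subset A$ follows directly from $\big||x|-|x_0'|\big|\leq|x-x_0'|$; no curvature estimate and no use of $r<R/b$ is needed for this geometric step.

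Second, the elimination of $M$ is not actually demonstrated, and the route you describe has a pitfall. In the regime $\rho^*=r(b-a)/2$, the bound $\int_A g\leq C M R^{d-1}r(b-a)$ combined with $\int_A g^\lambda\geq cM^\lambda(r(b-a))^d$ gives
\begin{equation*}
\left(\int_A g\right)^{l}\leq C\,M^{l-\lambda}R^{(d-1)l}\,(r(b-a))^{l-d}\int_A g^\lambda,
\end{equation*}
and comparing exponents with the target shows an excess factor $(R^{d-1}r(b-a))^{\gamma\lambda/(\gamma+d)}$, which is \emph{not} bounded under \eqref{condition of r for integral inequality} (it can grow like a positive power of $R$). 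The correct path in this regime is to use the interpolation $\int g\leq M^{1-\lambda}\int g^\lambda$ together with the a priori bound $M\leq H$, not the trivial volume bound. This yields $\left(\int_A g\right)^{l}\leq C_H\,(r(b-a))^{-d\gamma(1-\lambda)/(\gamma+d)}\int_A g^\lambda$, and the deficit factor $(r(b-a))^{d(d-1+\gamma\lambda)/(\gamma+d)}$ is controlled by \eqref{condition of r for integral inequality}, $R>1$, and the observation that $d-1+\gamma\lambda<\gamma+d-1$ since $\lambda<1$. Likewise, in the regime $\rho^*=\rho$ the interpolation gives $\left(\int_A g\right)^{l}\leq C H^{d(1-\lambda)/(\gamma+d)}\int_A g^\lambda$ with no $R,r$ dependence; to conclude, you still need to observe that \eqref{condition of r for integral inequality} forces $R^{d(d-1)/(\gamma+d)}(r(b-a))^{-d(\gamma+d-1)/(\gamma+d)}$ to be bounded below by a positive $H$-dependent constant, so that the $H$-factor can be absorbed into $N$. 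Once these choices are made explicit the proof closes; as written, the bookkeeping claim is asserted rather than verified, and the suggested use of the volume bound in the second regime is a step that would actually fail.
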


\begin{proof}
The idea of proof follows from  \cite[Theorem 4]{burdzy2010nonuniqueness} and \cite[Lemma 10.4.2]{rippl2013pathwise}.
    To prove Lemma~\ref{lemma:integral inequality} \eqref{item:integral inequality 1}, we separate the proof into two steps.

\textbf{ (Step 1)} In this step, we consider the case
\begin{equation}
\label{eq:first assumption of the proof of integral estimate}
\int_{R+ar<|x|<R+br} g(x)\,dx = 1.
\end{equation}
First we assume $\sup_{ R+ar<|x|<R+br }g(x)<1$. Notice that $|g(x)| < |g(x)|^\lambda$ for $x\in \{ x : R+ar<|x|<R+br \}$. Therefore, \eqref{eq:first assumption of the proof of integral estimate} yields
\begin{equation}\label{eq:integral inequality case 1}
\begin{aligned}
\int_{ R+ar<|x|<R+br } (g(x))^\lambda\, dx \geq 1.
\end{aligned}
\end{equation}

Now we consider the case where  $\sup_{R+ar<|x|<R+br}g(x)\geq1$. Due to the continuity of $g$, there exists $x_0\in \{ x : R+ar<|x|<R+br \}$ such that $g(x_0)>1/2 $. Then, since $g$ satisfies \eqref{holder regularity condi for reverse jensen}, for any $x\in\{ x : R+ar<|x|<R+br \}$ with $|x-x_0|\leq (4H)^{-1/\gamma}$, we have 
\begin{equation*}
g(x) \geq g(x_0) - |g(x) - g(x_0)| \geq \frac{1}{2} - H|x-x_0|^{\gamma}\geq \frac{1}{2} - H(4H)^{-1} = 1/4
\end{equation*}
Therefore, we have
\begin{equation}\label{eq:integral inequality case 2}
\begin{aligned}
\int_{ R+ar<|x|<R+br } (g(x))^\lambda\,dx
&\geq \int_{ R+ar<|x|<R+br } (g(x))^\lambda\bi_{\{|x-x_0|<(4H)^{-1/\gamma}\}}\,dx \\
&\geq 4^{-\lambda}\int_{ R+ar<|x|<R+br }\bi_{\{|x-x_0|<(4H)^{-1/\gamma}\}}\,dx \\
&\geq 4^{-\lambda}\prod_{i}\int_{ \frac{R+ar}{\sqrt{d}}<|x^i|<\frac{R+br}{\sqrt{d}} }\bi_{\{|x^i-x_0^i|<\frac{(4H)^{-1/\gamma}}{\sqrt{d}}\}}\,dx^i \\
&\geq N(\lambda,d) \left(  (4H)^{-1/\gamma} \wedge (b-a)r\right)^{d}.
\end{aligned}
\end{equation}
The last inequality follows from \cite[Lemma 10.4.1]{rippl2013pathwise}. Combining \eqref{eq:integral inequality case 1} and \eqref{eq:integral inequality case 2}, we get 
\begin{equation}\label{eq:integral inequality step 1}
\int_{ R+ar<|x|<R+br } (g(x))^\lambda\,dx \geq N\left(  (4H)^{-1/\gamma} \wedge (b-a)r\right)^{d},
\end{equation}
where $N = N(\lambda,d)$ is a positive constant.

\textbf{ (Step 2)} For general $g$, we set 
\begin{equation}
\label{def of G}
G := \left( \int_{R+ar<|x|<R+br} g(x)\,dx \right)^{\frac{1}{\gamma+d}}\quad\text{and}\quad\cG(x) := G^{-\gamma}g(Gx).
\end{equation} Then, by the change of variable, we have 
\begin{equation*}
\begin{aligned}
\int_{\frac{R+ar}{G}<|x|<\frac{R+br}{G}} \cG(x) \, dx 
&= G^{-(\gamma+d)}\int_{R+ar<|x|<R+br} g(x)\,  dx = 1.
\end{aligned}
\end{equation*}
Furthermore, since 
$$ |\cG(x) - \cG(y)| \leq G^{-\gamma}|g(Gx) - g(Gy)| \leq HG^{-\gamma}|Gx-Gy|^\gamma = H|x-y|^{\gamma},$$
\eqref{eq:integral inequality step 1} yields
\begin{equation}\label{eq:integral inequality general case}
\begin{aligned}
N(\lambda,d)\left(  (4H)^{-1/\gamma} \wedge \frac{(b-a)r}{G}\right)^{d}
&\leq\int_{\frac{R+ar}{G}<|x|<\frac{R+br}{G}} (\cG(x))^\lambda \, dx \\
&= G^{-\gamma\lambda-d}\int_{R+ar<|x|<R+br}(g(x))^\lambda \,dx.
\end{aligned}
\end{equation} 
Observe that
\begin{equation}
\begin{aligned}\label{eq:bound of A}
\frac{G}{r(b-a)}
&= \frac{1}{r(b-a)}\left( \int_{R+ar < |x| < R+br} g(x)dx \right)^{\frac{1}{\gamma+d}} \\
&\leq \frac{1}{r(b-a)}\left( \frac{\pi^{d/2}}{\Gamma(d/2+1)} \right)^{\frac{1}{\gamma+d}} \left( (R+br)^d - (R+ar)^d \right)^{\frac{1}{\gamma+d}}H^{\frac{1}{\gamma+d}}\\
&\leq \left( \frac{d\,H\,2^{d-1}\,\pi^{d/2}}{\Gamma(d/2+1)}  \right)^{\frac{1}{\gamma+d}} R^{\frac{d-1}{\gamma+d}} \left( r(b-a) \right)^{-\frac{\gamma+d-1}{\gamma+d}}.
\end{aligned}
\end{equation}
Also, note that for any $a,b>0$, if $a \wedge b  \leq N_0$ for some $N_0\in(0,\infty)$, then 
\begin{equation}
\label{min max ineq}
1 \leq N_0\left( \frac{1}{a} \vee \frac{1}{b}  \right) \leq N_0\left( \frac{1}{a} + \frac{1}{b}  \right).
\end{equation}
Thus, since $R,H>1$ and $r$ satisfies \eqref{condition of r for integral inequality}, if we multiply $G^{\gamma\lambda+d}$ both sides of \eqref{eq:integral inequality general case}, \eqref{eq:bound of A} and \eqref{min max ineq} imply
\begin{equation}
\label{int ineq}
\begin{aligned}
&\left( \int_{R+ar<|x|<R+br} g(x)\,dx \right)^{\frac{\gamma \lambda +d}{\gamma+d}}\\
&\leq N\left(  (4H)^{1/\gamma} \vee \frac{G}{r(b-a)}\right)^{d} \int_{R+ar<|x|<R+br} (g(x))^\lambda \, dx\\
&\leq N\left(  (4H)^{1/\gamma} + R^{\frac{d-1}{\gamma+d}}\left( r(b-a) \right)^{-\frac{\gamma+d-1}{\gamma+d}}\left( \frac{d\,H\,2^{d-1}\,\pi^{d/2}}{\Gamma(d/2+1)}  \right)^{\frac{1}{\gamma+d}}  \right)^{d} \int_{R+ar<|x|<R+br} (g(x))^\lambda \, dx\\
&\leq N R^{\frac{d(d-1)}{\gamma+d}} (r(b-a))^{\frac{-d(\gamma+d-1)}{\gamma+d}} \int_{R+ar<|x|<R+br} (g(x))^\lambda \, dx,
\end{aligned}
\end{equation}
where $N = N(\gamma,\lambda,d, H)$.
% It should be noted that \eqref{condition of r for integral inequality} is employed to obtain the last inequality. 
Thus, Lemma~\ref{lemma:integral inequality} \eqref{item:integral inequality 1} is proved.

Finally, we prove Lemma \ref{lemma:integral inequality} \eqref{item:integral inequality 2}. Since the proof is similar to the one of Lemma \ref{lemma:integral inequality} \eqref{item:integral inequality 1} with $d = 1$, we only point out the differences. Instead of \eqref{int ineq}, we have
\begin{equation}
\label{proof of integral ineq in t}
\left( \int_0^T  g(t) \,dt \right)^{\frac{\gamma \lambda +1}{\gamma +1}} 
\leq N\left( (4H)^{1/\gamma} \vee \frac{G}{T}\right)\int_0^T (g(t))^\lambda \,dt,
\end{equation} 
where $G:=\left(\int_0^T g(t) \, dt\right)^{1/(\gamma+1)}$. Since we assume $g(0) = 0$, we have $g(t)= g(t)-g(0)\leq H|t-0|^\gamma \leq H T^\gamma$ for all $t\in(0,T)$, and thus $G\leq H^{1/(\gamma+1)} T$. Therefore, from \eqref{proof of integral ineq in t} and \eqref{min max ineq},
\begin{equation*}
\left( \int_0^T  g(t) \,dt \right)^{\frac{\gamma \lambda +1}{\gamma +1}} \leq N\left( (8H)^{\frac{1}{\gamma}} + H^{\frac{1}{\gamma+1}}\right)\int_0^T (g(t))^\lambda \,dt,
\end{equation*} 
which completes the proof. 
 \end{proof}

\vspace{2mm}

Now we introduce an integral inequality that produces  a lower bound of correlation measure $f$ in \eqref{def of covaiance functional}. The motivation of the proof follows from \cite{CH2019,CK2019}.

\begin{lemma}\label{lemma:lower bound of covariance} 
For the given correlation measure $f$, there exists a nonnegative function $\varphi \in C^\infty_c(\dR^d)$ with $\int_{\dR^d} \varphi(x) dx >0$ such that for any bounded nonnegative measurable function $g$ with compact support on $\dR^d$, we have
	\begin{equation}
	\label{lower bound ineq}
		\int_{\dR^d} (g\ast\tilde g)(x) f(dx) \geq  \int_{\dR^d} |(g \ast \varphi)(x)|^2 dx,
	\end{equation}
	where $\tilde g(x) = g(-x)$.
\end{lemma}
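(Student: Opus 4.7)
My plan is to reduce the inequality to a comparison of measures on the Fourier side. Combining the covariance identity \eqref{def of covaiance functional} (applied with $\phi=\psi=g$ and using that $g$ is real) with the Plancherel identity and $\widehat{g\ast\varphi}(\xi) = (2\pi)^{d/2}\hat g(\xi)\hat\varphi(\xi)$, the two sides of \eqref{lower bound ineq} become
\[
\int_{\dR^d} (g\ast\tilde g)(x)\, f(dx) = \int_{\dR^d} |\hat g(\xi)|^2\, \mu(d\xi), \qquad \int_{\dR^d}|(g\ast\varphi)(x)|^2\, dx = (2\pi)^d\int_{\dR^d} |\hat g(\xi)|^2|\hat\varphi(\xi)|^2\, d\xi.
\]
Thus \eqref{lower bound ineq} is equivalent to
\[
\int_{\dR^d} |\hat g(\xi)|^2 \bigl[\mu(d\xi) - (2\pi)^d |\hat\varphi(\xi)|^2\, d\xi\bigr]\ge 0, \tag{$\star$}
\]
and I will construct $\varphi$ so that $(\star)$ holds for every bounded nonnegative compactly supported $g$.

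The crucial use of $g\ge 0$ is the uniform bound $|\hat g(\xi)|\le \hat g(0)=(2\pi)^{-d/2}\|g\|_{L^1}$ for every $\xi\in\dR^d$, which yields the universal upper estimate
\[
(2\pi)^d \int |\hat g|^2|\hat\varphi|^2\, d\xi \le (2\pi)^d\,\hat g(0)^2\,\|\varphi\|_{L^2}^2.
\]
It therefore suffices to exhibit a matching lower bound of the form $\int|\hat g|^2\, d\mu \ge c(f)\,\hat g(0)^2$, and then to take $\varphi = \epsilon\chi$ for a fixed nonnegative radial bump $\chi \in C_c^\infty(\dR^d)$ and $\epsilon>0$ so small that $(2\pi)^d\epsilon^2\|\chi\|_{L^2}^2 \le c(f)$. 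Such a choice automatically gives $\int\varphi>0$ and validates $(\star)$.

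For the lower bound I intend to exploit that $f$ is a nonzero nonnegative tempered measure whose Fourier transform $\mu$ is itself a nonnegative measure. This dual nonnegativity is rigid: it forces $\mu$ to place positive mass $\kappa_0$ on some ball $B_{R_0}$ around the origin (one cannot have $\mu$ supported entirely away from $0$ without losing nonnegativity of $f$), and the reinforced Dalang condition (Assumption \ref{assumption on f}) supplies the integrability needed to make everything quantitative. From there I split $\int|\hat g|^2\, d\mu \ge \int_{B_{R_0}}|\hat g|^2 \,d\mu$ and try to compare $|\hat g(\xi)|^2$ with $\hat g(0)^2$ on $B_{R_0}$.

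The main obstacle lies precisely in this last comparison: the naive modulus-of-continuity estimate for $\hat g$ near $\xi=0$ depends on the diameter of the support of $g$, which is not a priori controlled by the data $(f,\varphi)$. Producing a constant $c(f)>0$ that is truly uniform over all bounded nonnegative $g$ of compact support is the delicate structural point, and it is here that I will need to follow the comparison argument developed in \cite{CH2019,CK2019}, which relies critically on the nonnegativity of both $f$ and $\mu$ (not just $f$) and constitutes the technical heart of the lemma.
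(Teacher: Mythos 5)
Your Fourier reduction to $(\star)$ is valid, and the bound $|\hat g(\xi)|\le\hat g(0)$ for nonnegative $g$ is correct, but the lower bound $\int_{\dR^d}|\hat g|^2\,d\mu\ge c(f)\,\hat g(0)^2$ that your plan requires, with $c(f)>0$ uniform over all admissible $g$, is \emph{false}. Fix a smooth nonnegative bump $g$ and set $g_R(x):=g(x/R)$. Then $\hat g_R(\xi)=R^d\hat g(R\xi)$, so
\begin{equation*}
\frac{\int_{\dR^d}|\hat g_R(\xi)|^2\,\mu(d\xi)}{\hat g_R(0)^2}
=\frac{\int_{\dR^d}|\hat g(R\xi)|^2\,\mu(d\xi)}{|\hat g(0)|^2}
\xrightarrow[R\to\infty]{}\mu(\{0\})
\end{equation*}
by dominated convergence (using that $\hat g$ decays rapidly and $\mu$ is tempered and finite near the origin). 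This limit vanishes whenever $\mu$ has no atom at $0$, e.g.\ for the Riesz kernel $f(x)=|x|^{-\alpha}$ with $\mu(d\xi)=c\,|\xi|^{-(d-\alpha)}\,d\xi$, which the lemma must cover. You correctly flag this as the delicate point and defer to the comparison argument of \cite{CH2019,CK2019}, but that argument does not establish any bound of this form --- it cannot, since the bound is false --- so the gap in your plan is not a missing detail but a step that would fail.

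The paper's proof uses the nonnegativity of $g$ in $x$-space, not merely through $|\hat g|\le\hat g(0)$. It fixes a compactly supported, nonnegative, nonnegative-definite kernel $\psi$ with $\int\psi=1$ (a tensor product of triangle functions, so $|\hat\psi_\ep|\le 1$), and sets $f_\ep:=\psi_\ep\ast\psi_\ep\ast f$, which is a $C^2$ nonnegative, nonnegative-definite \emph{function}. Hence $f_\ep$ attains its maximum at the origin, $f_\ep(0)>0$, and $f_\ep\ge c>0$ on some ball $\{|x|\le r\}$. One then chooses $\varphi\in C_c^\infty(\dR^d)$, $\varphi\ge 0$, supported near the origin with $\|\varphi\|_{L_2}$ small enough that $\varphi\ast\tilde\varphi\le f_\ep$ pointwise. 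Because $g\ge 0$, this pointwise inequality of $x$-space kernels yields
\begin{equation*}
\iint g(x)g(y)\,(\varphi\ast\tilde\varphi)(x-y)\,dx\,dy \le \iint g(x)g(y)\,f_\ep(x-y)\,dx\,dy,
\end{equation*}
and Plancherel together with $|\hat\psi_\ep|\le 1$ then gives $\iint g g\,f_\ep=\int|\hat g|^2\,|\hat\psi_\ep|^2\,\mu(d\xi)\le\int|\hat g|^2\,\mu(d\xi)=\int(g\ast\tilde g)\,f(dx)$. The essential idea your proposal is missing is the intermediate mollification $f_\ep$ and the pointwise inequality $\varphi\ast\tilde\varphi\le f_\ep$; passing to the uniform bound $|\hat g|\le\hat g(0)$ discards too much of the structure of $\hat g$ for any fixed $\varphi$ to work across the whole class of test functions $g$.
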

\begin{proof}
Let us  first  construct $\varphi$ by approximating $f$ with a continuous nonnegative function. Define 
\begin{equation*}
\psi(x) : = 4^{-d} \prod_{i=1}^d (2-|x^i|)\bi_{|x^i|\leq 2}(x).
\end{equation*}
Observe that $\psi$ is a continuous, nonnegative and nonnegative definite function on $\dR^d$. In addition, $\psi$ has a compact support and $\int_{\dR^d} \psi(x) dx=1.$ For $\ep>0$, define $\psi_\ep(x) : = \ep^{-d}\psi(x/\ep)$. Then, we have
\begin{equation}
\label{eq:upper bound of fourier transform of mollifier}
|\hat{\psi}_\ep(\xi)|^2 = |\hat{\psi}(\ep \xi)|^2 = \left| \int_{\dR^d} e^{-i\ep\langle \xi,x\rangle } \psi(x) dx\right|^2 \leq \left|\int_{\dR^d}  \psi(x) dx\right|^2 =1,
\end{equation} where $\hat{\psi}(\xi) : = \cF(\psi)(\xi)$.
With given $\psi_\ep$, set $f_{\ep}(x) : = (\psi_\ep\ast\psi_\ep\ast f)(x)$. Then,  $f_\ep$ is a nonnegative and nonnegative definite $C^2(\dR^d)$ \textit{function} that is not identically 0. Thus,  $f_\ep(0) >0$  since $f_\ep$ achieves its maximum at the origin (thanks to the fact that $f_\ep$ is  a nonnegative definite function). Thus, there exists a constant $r>0$ satisfying $\inf_{|x|\leq r} f_\ep(x) \geq c>0$ for some $c>0$, which implies that  there exists $\varphi\in C^\infty_c(\dR^d)$ such that $\varphi\geq 0$, $supp(\varphi) \subseteq \{|x| \leq r\}$, and $0< \sup_{x\in\dR^d}(\varphi\ast\tilde{\varphi})(x) = \lVert \varphi\rVert_{L_2} \leq c/2$.

Next, we approximate $g$ by $C_c(\dR^d)$ functions to apply \eqref{def of covaiance functional}. Choose $\zeta\in C^\infty_c(\dR^d)$ such that $\int_{\dR^d} \zeta(x) dx =1$ and set  $\zeta_\delta(x):= \delta^{-d} \zeta(x/\delta)$ with $\delta>0$. Recall $g^{(\delta)}(x)= (g\ast \zeta_\delta)(x)$ in \eqref{def of mollification}. Since $g^{(\delta)}(\cdot)\in C_c(\dR^d)$, by Fatou's lemma and \eqref{def of covaiance functional}, we have  
\begin{equation}
\label{proof of lower bound ineq}
	\begin{aligned}
		\int_{\dR^d} |(g\ast \varphi)(x)|^2 dx & = \int_{\dR^d}\int_{\dR^d} g(x) g(y) (\varphi\ast\tilde{\varphi})(x-y) dxdy \\
		&\leq   \int_{\dR^d}\int_{\dR^d} g(x) g(y) f_\ep(x-y) dxdy \\
		&\leq \liminf_{\delta\downarrow 0}  \int_{\dR^d}\int_{\dR^d} g^{(\delta)}(x) g^{(\delta)}(y) f_\ep(x-y) dxdy\\
		&= \liminf_{\delta\downarrow 0} \int_{\dR^d} \left(g^{(\delta)}\ast \tilde{g}^{(\delta)}\right)(x) f_\ep(x) dx \\
		&= \liminf_{\delta\downarrow 0} \int_{\dR^d} \left| \hat{g}^{(\delta)}(\xi) \right|^2 \hat{f}_\ep(\xi) d\xi\\
		&= \liminf_{\delta\downarrow 0} \int_{\dR^d} \left| \hat{g}^{(\delta)}(\xi)\right|^2 \left|\hat{\psi}_\ep(\xi) \right|^2 \hat{f}(d\xi).
	\end{aligned} 
\end{equation} 
The last equality follows from $f_\ep: = \psi_\ep\ast\psi_\ep\ast f$. By applying \eqref{eq:upper bound of fourier transform of mollifier} to \eqref{proof of lower bound ineq}, we have
\begin{equation*}
	\begin{aligned}
		\int_{\dR^d} |(g\ast \varphi)(x)|^2 dx &\leq \liminf_{\delta\downarrow 0} \int_{\dR^d} \left| \hat{g}^{(\delta)}(\xi) \right|^2 \hat{f}(d\xi)\\
		&= \liminf_{\delta\downarrow 0}\int_{\dR^d} \left( g^{(\delta)}\ast\tilde g^{(\delta)} \right)(x) f(dx)\\
		&= \int_{\dR^d} (g\ast\tilde g)(x) f(dx),
	\end{aligned}
\end{equation*} 
where the last equality is justified by the bounded convergence theorem. This completes the proof. 
\end{proof}

We are now ready to prove Lemma \ref{ito_prop}.
\begin{proof}[\textbf{Proof of Lemma \ref{ito_prop}}]

First we prove \eqref{key_prop_1}. Let $\fp,\fq>0, \alpha \in (0,1)$, and recall $ l = \frac{\gamma\lambda+d}{\gamma+d}\in(0,1)$, where $\gamma$ is the constant introduced in Theorem \ref{thm:existence of solution}. Define
\begin{equation}
\label{definition of kappa}
\kappa := \tau\wedge\inf\left\{ t\geq0: \int_0^t \left( \int_{\partial Q_R}  u(s,\sigma) d\sigma_\sigma \right)^{ l} ds \geq \fq^ l \right\},
\end{equation}
and 
\begin{equation}
	U_R(s,x) := e^{-K_2s}e^{-K_1M(x)}M(x)(u(s,x))^\lambda,
\end{equation}
where $M(x)$ is the function introduced in \eqref{def of M(x)}.
By Chebyshev's inequality, \eqref{eq:estimation of quadratic variation}, and the definition of $\kappa$, we have
\begin{equation}
\label{ineq to prove maximum principle}
\begin{aligned}
&\dP\left( \int_0^\tau \int_{\dR^d} (U_R(s,\cdot)\ast \tilde U_R(s,\cdot))(x)  f(dx)ds \geq \fp^ l \right) \\
&\leq  \dP\left( \kappa < \tau \right) + \dP\left( \int_0^\kappa \int_{\dR^d} (U_R(s,\cdot)\ast \tilde U_R(s,\cdot))(x) f(dx) ds \geq \fp^ l \right) \\
&\leq \dP\left( \int_0^\tau \left( \int_{\partial Q_R}  u(s,\sigma) d\sigma_R \right)^{ l} ds \geq \fq^ l \right) + \fp^{- l\alpha/2}N\dE\left[\left(\int_0^\kappa \int_{\partial Q_R} u(s,\sigma) d\sigma_R ds\right)^\alpha\right],
\end{aligned}
\end{equation}
where $N = N(\alpha,d,K)$. Now we control the last term of \eqref{ineq to prove maximum principle}. 
 Due to \eqref{holder reg for ito ineq}, we have
\begin{equation*}
\begin{aligned}
\left| \int_{\partial Q_R} u(t,\sigma)d\sigma_R - \int_{\partial Q_R} u(s,\sigma) d\sigma_R \right| \leq \int_{\partial Q_R} \left| u(t,\sigma) - u(s,\sigma)\right| d\sigma_R  \leq N(d) R^{d-1} e^{aR}H  |t-s|^\gamma.
\end{aligned}
\end{equation*}
for $t,s\in(0,T)$. Additionally, note that $\int_{\partial Q_R} u(0,\sigma) d\sigma_R = \int_{\partial Q_R} u_0(\sigma) d\sigma_R = 0$. Therefore, Lemma \ref{lemma:integral inequality} \eqref{item:integral inequality 2} implies
\begin{equation}
\label{apply integral ineq in t}
\int_0^{\kappa} \int_{ \partial Q_R } u(s,\sigma) d\sigma_R ds \leq N\left(\int_0^{\kappa} \left( \int_{ \partial Q_R } u(s,\sigma) d\sigma_R \right)^ l ds \right)^{\frac{\gamma+1}{\gamma l+1}},
\end{equation}
where $N = N(\gamma,a,d,R,H)$. Thus, by definition of $\kappa$ and applying \eqref{apply integral ineq in t} to the last term of \eqref{ineq to prove maximum principle}, we have
\begin{equation}
\label{in the pf of maximum prin, controlling the second term}
\begin{aligned}
\dE\left[\left( \int_0^{\kappa} \int_{ \partial Q_R } u(s,\sigma) d\sigma_R ds \right)^{\alpha} \right]\leq N\dE\left[\left( \int_0^{\kappa} \left( \int_{ \partial Q_R } u(s,\sigma) d\sigma_R \right)^ l ds \right)^{\frac{\gamma+1}{\gamma l+1}\alpha} \right]\leq N\fq^{\frac{(\gamma+1) l\alpha}{\gamma l+1}},
\end{aligned}
\end{equation}
where $N = N(\alpha,\gamma,a,d,H,R)$. Therefore, by applying \eqref{in the pf of maximum prin, controlling the second term} to the last term of \eqref{ineq to prove maximum principle},
\begin{equation}
\label{before letting p,q goes to zero}
\begin{aligned}
&\dP\left( \int_0^\tau \int_{\dR^d} (U_R(s,\cdot)\ast \tilde U_R(s,\cdot))(x) f(dx) ds \geq \fp^ l \right) \\
&\quad\quad\leq \dP\left( \int_0^\tau \left( \int_{\partial Q_R}  u(s,\sigma) d\sigma_R \right)^{ l} ds \geq \fq^ l \right) + N \left(\frac{\fq^{\frac{\gamma+1}{\gamma l+1}}}{\fp^{\frac{1}{2}}}\right)^{ l\alpha},
\end{aligned}
\end{equation}
where $N = N(\alpha,\gamma,\lambda,a,d,R,H,K)$. By letting $\fq\downarrow0$ and $\fp\downarrow0$ in order, \eqref{before letting p,q goes to zero} implies
\begin{equation}
\label{proof of ito prop first assertion}
\begin{aligned}
&\dP\left( \int_0^\tau \left( \int_{\partial Q_R}  u(s,\sigma) d\sigma_R \right)^ l ds = 0 \right) 
% &\quad = 1 - \dP\left( \int_0^\tau \left( \int_{\partial Q_R} u(s,\sigma) d\sigma_R \right)^ l ds > 0 \right) \\
% &\quad \leq 1 - \dP\left( \int_0^\tau \int_{\dR^d} (U_R(s,\cdot)\ast \tilde U_R(s,\cdot))(x) f(dx)  ds > 0 \right) \\
\leq \dP\left( \int_0^\tau \int_{\dR^d} (U_R(s,\cdot)\ast \tilde U_R(s,\cdot))(x) f(dx)  ds = 0 \right).
\end{aligned}
\end{equation}
Next, we claim that
\begin{equation}
\label{proof of ito prop first assertion 2}
	\begin{aligned}
		&\left\{ \omega\in\Omega :  \int_0^\tau \int_{\dR^d} (U_R(s,\cdot)\ast \tilde U_R(s,\cdot))(x) f(dx)  ds = 0 \right\} \\
		&\quad \subset \left\{ \omega\in\Omega :  u(s,x) =0 \quad \text{for all }s\in[0,\tau] \text{ and } x\in Q_R \right\}.
	\end{aligned}
\end{equation}
Indeed, for $S>R$, let us set
\begin{equation*}
	g_{R,S}(s,x) :=  M(x)e^{-K_1M(x)}(u(s,x))^\lambda\bi_{Q_{R}\setminus Q_{S}}(x)
\end{equation*} 
and choose $\varphi \in C^\infty_c(\dR^d)$ introduced in Lemma \ref{lemma:lower bound of covariance}. Since $g_{R,S}$ is a bounded nonnegative measurable function with compact support on $\dR^d$, we have
\begin{equation}
\label{cut-off decomposition}
\begin{aligned}
\int_{\dR^d} \left( g_{R,S}(s,\cdot)\ast \tilde g_{R,S}(s,\cdot) \right)(x)f(dx) &\geq \int_{\dR^d} |(g_{R,S}(s,\cdot) \ast \varphi)(x)|^2 d x \\
&= \int_{\dR^d}\int_{\dR^d} g_{R,S}(s,x)g_{R,S}(s,y) (\varphi\ast \tilde{\varphi})(x-y)dydx.
\end{aligned}
\end{equation} 
Therefore, if we take $\omega\in\Omega$ satisfying $\int_0^\tau\int_{\dR^d} \left( U_{R}(s,\cdot)\ast \tilde U_{R}(s,\cdot) \right)(x)f(dx)ds=0$, then $U_R \geq g_{R,S}$, nonnegativity of $f$, and \eqref{cut-off decomposition} yield
$$\int_0^\tau\int_{Q_R\setminus Q_S}\int_{Q_R\setminus Q_S} e^{-2K_2s-K_1(M(x)+M(y))}M(x)M(y)(u(t,x))^\lambda(u(t,y))^\lambda (\varphi\ast \tilde{\varphi})(x-y) dydxds  = 0.
$$
Thus,
$$ (u(s,x))^\lambda(u(s,y))^\lambda (\varphi\ast\tilde \varphi)(x-y) = 0
$$ 
almost every $(s,x,y)\in(0,\tau)\times (Q_R\setminus Q_S)\times(Q_R\setminus Q_S)$. Since $ (\varphi\ast\tilde \varphi)(0)>0$, we have $(u(t,x))^{2\lambda} = 0$ for almost every $(t,x)\in(0,\tau)\times (Q_R\setminus Q_S)$. Since $S>R$ is arbitrary, the continuity of $u$ implies \eqref{proof of ito prop first assertion 2}. Then, by \eqref{proof of ito prop first assertion} and \eqref{proof of ito prop first assertion 2}, we have \eqref{key_prop_1}. \\

Next we prove \eqref{key_prop_2}.  Let $\alpha \in (0,1)$, $\fp,\fq>0$, and $ \fr>0$ satisfy \eqref{condition of r for decay prop}. Again, we set 
$$ \kappa := \tau\wedge\inf\left\{ t\geq0: \int_0^t \left( \int_{\partial Q_R} u(s,\sigma) d\sigma_R \right)^ l ds \geq \fq^ l \right\}.
$$
Then, by Chebyshev's inequality and Jensen's inequality, we have
\begin{equation}
\label{eq: upper bound using kappa}
\begin{aligned}
&\fr^{-1}\int_\fr^{2\fr} \dP\left( \int_0^\tau \left( \int_{\partial Q_{R+z}} u(s,\sigma) d\sigma_{R+z} \right)^ l ds \geq \fp^ l \right)dz \\
&\quad\leq  \dP(\kappa<\tau) + \fr^{-1}\int_\fr^{2\fr} \dP\left( \int_0^{\kappa}  \left( \int_{\partial Q_{R+z}} u(s,\sigma) d\sigma_{R+z} \right)^ l ds \geq \fp^ l \right)dz \\
&\quad\leq \dP\left( \int_0^\tau \left( \int_{\partial Q_{R}} u(s,\sigma) d\sigma_{R} \right)^ l ds \geq \fq^ l \right) \\
&\quad\quad + \fr^{-1}\fp^{-\alpha l}\int_\fr^{2\fr} \dE\left[\left( \int_0^{\kappa}  \left( \int_{\partial Q_{R+z}} u(s,\sigma) d\sigma_{R+z} \right)^ l  ds  \right)^\alpha\right] dz.
\end{aligned} 
\end{equation}
To bound the last term of \eqref{eq: upper bound using kappa}, we use Jensen's inequality to get 
\begin{equation}
\label{applying int by parts}
\begin{aligned}
&\int_\fr^{2\fr} \dE\left[\left( \int_0^{\kappa}  \left( \int_{\partial Q_{R+z}} u(s,\sigma) d\sigma_{R+z} \right)^ l  ds  \right)^\alpha \right]dz \\
&\quad \leq \fr^{1-\alpha l} \dE\left[\left( \int_0^{\kappa} \left( \int_\fr^{2\fr} \int_{\partial Q_{R+z}} u(s,\sigma) d\sigma_{R+z}dz \right)^ l  ds  \right)^\alpha \right] \\
&\quad = \fr^{1-\alpha l} \dE\left[\left( \int_0^{\kappa} \left( \int_{R+\fr<|x|<R+2\fr} u(s,x) dx  \right)^ l  ds  \right)^\alpha \right].
\end{aligned}
\end{equation}
Thanks  to \eqref{holder reg for ito ineq}, we can use  Lemma \ref{lemma:integral inequality}  \eqref{item:integral inequality 1} to see that  the last term of \eqref{applying int by parts} is dominated by
\begin{equation}
\label{apply integral ineq in x}
\begin{aligned}
&\dE\left[\left( \int_0^{\kappa}  \left( \int_{R+\fr < |x| < R+2\fr}  u(s,x) dx\right)^ l   ds \right)^{\alpha}\right] \\
&\quad \leq Ne^{\alpha l aR}\dE\left[\left( \int_0^{\kappa}  \left( \int_{R+\fr < |x| < R+2\fr} \Psi_a(x) u(s,x) dx\right)^ l   ds \right)^{\alpha}\right] \\
&\quad\leq N R^{\frac{\alpha d(d-1)}{\gamma+d}} e^{\alpha (l-\lambda) a R} \fr^{\frac{-\alpha d(\gamma+d-1)}{\gamma+d}}\dE\left[\left( \int_0^{\kappa} \int_{R+\fr < |x| < R+2\fr} (u(s,x))^\lambda dx   ds \right)^{\alpha}\right],
\end{aligned}
\end{equation}
where $l = \frac{\gamma\lambda+d}{\gamma+d}$ and $N = N(\alpha,\gamma,\lambda,a,d,H)$. Since the difference between $Q_{R+\fr}\setminus Q_{R+2\fr}$ and $ \{x:R+\fr < |x| < R+2\fr\}$ has measure zero, by combining \eqref{applying int by parts} and \eqref{apply integral ineq in x}, we have
\begin{equation}
\label{ineq:key ineq in the proof of key lemma 2}
\begin{aligned}
&\fr^{-1}\int_\fr^{2\fr}\dE\left[\left( \int_0^{\kappa}  \left( \int_{\partial Q_{R+z}} u(s,\sigma) d\sigma_{R+z} \right)^ l  ds  \right)^\alpha \right]dz\\
&\quad \leq NR^{\frac{\alpha d(d-1)}{\gamma+d}} e^{\alpha (l-\lambda) a R} \fr^{\frac{-\alpha d(\gamma+d-1)}{\gamma+d}-\alpha  l}\dE\left[\left( \int_0^{\kappa} \int_{Q_{R+\fr} \setminus Q_{R+2\fr}} \left( u(s,x) \right)^\lambda dx ds \right)^{\alpha}\right],
\end{aligned}
\end{equation}
where $N = N(\alpha,\gamma,\lambda,a,d,H,K)$.

To proceed further, we set 
$$g(s,x) := M(x)e^{-K_1M(x)}(u(s,x))^\lambda\bi_{Q_{R+\fr}\setminus Q_{R+2\fr}}(x).
$$ and let $\varphi\in C_c(\dR^d)$ be the function introduced   in Lemma  \ref{lemma:lower bound of covariance}. Without loss of generality, we may assume that $supp(\varphi) \subseteq Q_R^c$. Then, note that
\begin{equation}
\label{integral ineq of a function with compact support}
\begin{aligned}
\left( \int_{\dR^d} g(s,x) dx \right)^2 &\leq N\left( \int_{\dR^d} (g(s,\cdot)\ast\varphi)(x) dx \right)^2  \\
&= N\left( \int_{|x|<2(R+2\fr)} (g(s,\cdot)\ast\varphi)(x) dx \right)^2  \\
&\leq N R^{d}\int_{|x|<2(R+2\fr)} \left| (g(s,\cdot)\ast\varphi)(x) \right|^2 dx,
\end{aligned}
\end{equation}
where $N = N(d,f)$.
The equality in the second line holds since $supp(g(s,\cdot))+supp(\phi) \subseteq \{|x| <2(R+2\fr)\}.$ Thus, by applying Lemma \ref{lemma:lower bound of covariance} to \eqref{integral ineq of a function with compact support}, we have
\begin{equation}
\label{ineq:key ineq in the proof of key lemma 4}
\begin{aligned}
\left(\int_{\dR^d} g(s,x) dx \right)^2
\leq NR^{d} \int_{\dR^d} (g(s,\cdot)\ast\tilde g(s,\cdot))(x) f(dx),
\end{aligned}
\end{equation}
where $N = N(d,f)$. We now use  \eqref{ineq:key ineq in the proof of key lemma 4} and Jensen's inequality to get that 
\begin{equation}
\label{ineq:key ineq in the proof of key lemma 3}
\begin{aligned}
R^{-\alpha d/2}&\fr^{\alpha}e^{-2K_1\fr\alpha}\left( \int_0^\tau \int_{Q_{R+\fr}\setminus Q_{R+2\fr}}(u(s,x))^\lambda dx ds\right)^\alpha \\
&\leq R^{-\alpha d/2}\left( \int_0^\tau \int_{Q_{R+\fr}\setminus Q_{R+2\fr}} g(s,x) dx ds \right)^\alpha \\
&\leq N\left( \int_0^\tau \int_{\dR^d} e^{-2K_2s}\left(g(s,\cdot)\ast \tilde g(s,\cdot)\right)(x) f(dx) ds\right)^{\alpha/2},
\end{aligned}
\end{equation}
where $N = N(\alpha,d,f,K,T)$. Then, by combining \eqref{ineq:key ineq in the proof of key lemma 3} and Lemma \ref{prop:estimation of quadratic variation}, since $\fr \leq 1$, we have
\begin{equation}
\label{kernel_calculation}
\begin{aligned}
&\dE\left[\left( \int_0^{\kappa} \int_{Q_{R+\fr} \setminus Q_{R+2\fr}} \left(u(s,x) \right)^\lambda dx ds \right)^{\alpha}\right] \\
&\quad \leq N R^{\alpha d/2}  \fr^{-\alpha}  e^{ 2K_1 \fr \alpha }  \dE\left[\left( \int_0^{\kappa} \int_{ \partial Q_R }e^{-K_2s}u(s,\sigma) d\sigma_R ds \right)^{\alpha}\right], \\
&\quad \leq N R^{\alpha d/2}  \fr^{-\alpha}    \dE\left[\left( \int_0^{\kappa} \int_{ \partial Q_R }e^{-K_2s}u(s,\sigma) d\sigma_R ds \right)^{\alpha}\right], \\
\end{aligned}
\end{equation}
where $N = N(\alpha,\lambda,d,f,H,K,T)$. On the other hand,  \eqref{holder reg for ito ineq} implies
\begin{equation*}
\begin{aligned}
&\left| \int_{\partial Q_R} u(t,\sigma) d\sigma_R - \int_{\partial Q_R} u(s,\sigma) d\sigma_R \right| 
&\leq N(d)R^{d-1}  e^{aR} H  |t-s|^\gamma.
\end{aligned}
\end{equation*}
and $\int_{\partial Q_R} u(0,\sigma) d\sigma_R = 0$. Therefore, by Lemma \ref{lemma:integral inequality} \eqref{item:integral inequality 2}, we have
\begin{equation}
\label{apply def of kappa}
\begin{aligned}
\dE\left[\left( \int_0^{\kappa} \int_{ \partial Q_R } u(s,\sigma) d\sigma_R  ds \right)^{\alpha} \right]
&\leq N  R^{\frac{L \alpha (d-1)}{\gamma}}e^{\frac{\alpha a L R}{\gamma}} \dE\left[\left(\int_0^{\kappa} \left( \int_{ \partial Q_R } u(s,\sigma) d\sigma_R \right)^ l ds \right)^{L \alpha} \right]\\
&\leq N R^{\frac{L \alpha (d-1)}{\gamma}}e^{\frac{\alpha a L R}{\gamma}} \fq^{L\alpha l},
\end{aligned}
\end{equation}
where $L := \frac{\gamma+1}{\gamma l+1} := \frac{\gamma(\gamma+d)+\gamma+d}{\gamma(\gamma\lambda+d)+\gamma+d}$ and $N = N(\alpha,\gamma,\lambda,d,H)$.
Therefore, by combining \eqref{eq: upper bound using kappa}, \eqref{ineq:key ineq in the proof of key lemma 2}, \eqref{kernel_calculation}, and \eqref{apply def of kappa}, we have 
\begin{equation*}
\begin{aligned}
&\fr^{-1}\int_\fr^{2\fr} \dP\left( \int_0^\tau \left( \int_{\partial Q_{R+z}} u(s,\sigma) d\sigma_{R+z} \right)^ l ds \geq \fp^ l \right)dz \\
&\leq \dP\left( \int_0^\tau \left( \int_{\partial Q_{R}} u(s,\sigma) d\sigma_{R} \right)^ l ds \geq \fq^ l \right) \\
&\quad+ N   R^{\alpha\left( \frac{ d}{2}+\frac{ d (d-1)}{\gamma+d}+\frac{L (d-1)}{\gamma} \right)}  e^{\alpha  a R\left( l - \lambda + \frac{L}{\gamma} \right)} \fr^{ -\alpha\left( 1 +  l + \frac{ d (\gamma+d-1)}{\gamma+d}\right)}  \left(\frac{\fq^L }{\fp}\right)^{\alpha l},\\
\end{aligned}
\end{equation*}
where $L := \frac{\gamma+1}{\gamma l+1} := \frac{\gamma(\gamma+d)+\gamma+d}{\gamma(\gamma\lambda+d)+\gamma+d}$ and $N = N(\alpha,\gamma,\lambda,a,d,f,H,K,T)>0$. This implies \eqref{ito ineq} and we complete the proof.

\end{proof}
%%%%%%%%%%%%%%%%%%%%%%%%%%%%%%%%%%%%%%%%%%%%%%%%%%%%%%%%%%%%%%%%%%%%%%%%%%%%%%%%%%%%%%%%%%%%%%%%%%%%%%%%%%%%%%%%%%%%%%%%

\section{Proof of Lemma \ref{L_1 bound with weight}}
\label{sec:proof of L_1 bound}

In this section, we  provide  the proof of Lemma \ref{L_1 bound with weight}. Throughout this section, we assume that there exist $a,H\in(0,\infty)$ satisfying \eqref{holder reg for ito ineq0}. To prove Lemma \ref{L_1 bound with weight}, we need an $L_1$ bound of the solution.

% For $\delta>0$, define 
% \begin{equation}
% \label{def_of_q}
% q(x) := |x|e^{\delta|x|}\quad \text{for $x\in \dR^d$}.
% \end{equation}

\begin{lemma}
\label{lemma:integral bound on solution with weight}
Let $\tau\leq T$ be a bounded stopping time.
Then, for every $R > (R_0\vee 1)+1$ and $\delta>0$, we have
\begin{equation}
\label{L1 bound with weight}
\begin{aligned}
\dE\left[\int_{Q_{R}}|x|e^{\delta |x|}u( \tau,x)dx\right] 
&\leq N
\end{aligned}
\end{equation}
where $N = N(\delta, a,d,m, H, K,R_0,T)$ and $R_0$ is the constant introduced in Assumption \ref{assumption on initial data}. In particular,
\begin{equation}
\label{L1 bound with weight - limit}
	\lim_{R\to\infty}\dE\left[\int_{Q_{R}}|x|e^{\delta |x|}u( \tau ,x)dx\right] = 0.
\end{equation}
\end{lemma}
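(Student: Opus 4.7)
\textbf{Proof plan for Lemma \ref{lemma:integral bound on solution with weight}.}

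The approach is to apply the weak formulation \eqref{eq:sol_int_eq_form} with a carefully chosen test function of the form $\phi_R(x)\approx|x|e^{\delta|x|}\bi_{|x|>R}$, take expectation to kill the Walsh stochastic integral, estimate the resulting drift via $\cL^\ast$, and close the loop by Gronwall. Concretely, fix $R>(R_0\vee1)+1$, pick $N\gg R$, and construct $\phi_{R,N}\in C_c^\infty(\dR^d)$ nonnegative such that $\phi_{R,N}(x)=|x|e^{\delta|x|}$ on $\{R\le|x|\le N\}$, $\phi_{R,N}\equiv0$ on $\{|x|\le R_0\}$ and on $\{|x|\ge2N\}$, with bounded smooth cutoffs near $|x|=R_0$ and $|x|=2N$. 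Since $\text{supp}(\phi_{R,N})\cap\text{supp}(u_0)=\emptyset$, the initial contribution vanishes: $(u_0,\phi_{R,N})=0$.

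Because $\phi_{R,N}$ has compact support and $u\in C_{tem}$ with $u(t,x)\le H\cosh(a|x|)$ on $[0,\tau]$, the integrand $h(s,x,u(s,x))\phi_{R,N}(x)$ is a.s.\ bounded on $[0,\tau]\times\dR^d$; the Dalang--Walsh construction (Remark \ref{random measure}) together with the BDG inequality then shows the stochastic integral in \eqref{eq:sol_int_eq_form} is a true martingale, so its expectation vanishes. This gives
\begin{equation*}
\dE[(u(\tau),\phi_{R,N})]=\dE\Bigl[\int_0^\tau (u(s,\cdot),\cL^\ast\phi_{R,N})\,ds\Bigr].
\end{equation*}
A direct computation using \eqref{eq:adjoint} and the explicit derivatives of $|x|e^{\delta|x|}$ shows that on the core region $\{R_0+1/2\le|x|\le N\}$ one has $|\cL^\ast\phi_{R,N}(x)|\le C_1(\delta,d,K)\,\phi_{R,N}(x)$, since the dominant term $a^{ij}\phi_{x^ix^j}$ contributes $\le K\delta^2|x|e^{\delta|x|}$ plus lower order. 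In the transition annulus $\{R_0<|x|<R_0+1/2\}$, all derivatives of the inner cutoff are bounded by constants depending only on $R_0$, and $u$ there is dominated by the fixed constant $H e^{a(R_0+1)}$.

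Setting $g_N(t):=\dE[(u(t\wedge\tau),\phi_{R,N})]$, one obtains the integral inequality
\begin{equation*}
g_N(t)\le C_1\int_0^t g_N(s)\,ds+C_3+\int_0^t\dE\Bigl[\int_{N<|x|<2N}|\cL^\ast\phi_{R,N}(x)|\,u(s,x)\,dx\Bigr]ds,
\end{equation*}
where $C_3=C_3(\delta,a,d,H,K,R_0,T)$ absorbs the inner annulus contribution. Gronwall then yields $g_N(t)\le e^{C_1 t}(C_3+\mathcal{E}_N)$, and by monotone convergence as $N\to\infty$ we conclude
\begin{equation*}
\dE\Bigl[\int_{Q_R}|x|e^{\delta|x|}u(\tau,x)\,dx\Bigr]\le\liminf_{N\to\infty}g_N(\tau)\le N(\delta,a,d,H,K,R_0,T),
\end{equation*}
which is \eqref{L1 bound with weight}. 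The limit \eqref{L1 bound with weight - limit} follows from the dominated/monotone convergence theorem applied to the family $\bigl\{|x|e^{\delta|x|}u(\tau,x)\bi_{Q_R}\bigr\}_{R}$.

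\textbf{Main obstacle.} The delicate step is controlling the outer error $\mathcal{E}_N$ coming from the annulus $\{N<|x|<2N\}$: the naive pointwise bound $u(s,x)\le He^{a|x|}$ produces $\int_{N<|x|<2N}(1+|x|)e^{(\delta+a)|x|}dx$, which diverges as $N\to\infty$. To overcome this, one invokes the $L_p$-theory (cf.\ the proof of Theorem \ref{thm:existence of solution} in Section \ref{sec:proof of holder conti of sol}) applied to \eqref{original_equation} in weighted Sobolev spaces, which gives bounds on $\dE[u(s,x)^p]$ that decay faster than any exponential in $|x|$ (inherited, via the mild formulation, from Aronson-type Gaussian bounds on the random parabolic Green function associated with $\cL$, combined with $u_0$ having compact support and the growth bound on $h$). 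Thanks to these refined moment bounds, $\mathcal{E}_N\to0$ as $N\to\infty$, closing the argument.
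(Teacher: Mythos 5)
Your overall architecture (test against a cutoff version of $|x|e^{\delta|x|}$ supported in $Q_{R_0}$, take expectations to kill the Walsh integral, control the drift via $\cL^*$) matches the paper's. But there are two substantive differences, one of which is a genuine gap.

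The lesser difference: you close the drift term by Gronwall, whereas the paper multiplies the test function by $e^{-K_0 t}$ and picks $K_0 > (5+6\delta+\delta^2)K$ so that $\cL^*(\text{weight}) - K_0(\text{weight})$ is pointwise nonpositive on the region where the outer cutoff is smooth (see \eqref{calculation to obtain L1 bound 4}--\eqref{calculation to obtain L1 bound 4.5}); the damping trick is not just a cosmetic alternative to Gronwall here, because it is exactly what allows the paper to discard the entire drift contribution after integrating against $u\geq 0$, rather than carry it into an inequality that still contains the outer boundary error.

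The essential difference is your handling of the outer boundary error $\mathcal{E}_N$, which you correctly single out as the delicate point. Your proposed fix is not substantiated. The paper's $L_p$-theory (Appendix A) produces only $\cosh(a|x|)^{-1}$-weighted estimates, i.e.\ exponential control, and these are precisely what you already used and found insufficient. Passing to ``weighted Sobolev spaces'' with super-exponential weights, a mild formulation, and Aronson estimates for the random Green function of $\cL$ is none of it developed in the paper, which works throughout with the weak formulation \eqref{eq:sol_int_eq_form} and never introduces a Green-function representation of \eqref{original_equation}; for a random second-order $\cL$ this would be a substantial side project, not something ``one invokes.'' It is also not logically obvious that the noise term preserves Gaussian spatial decay under the hypotheses of Assumption \ref{assumption on f}, since the noise injects mass at all spatial scales and the bound \eqref{condition of h} only gives linear growth of $h$. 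In contrast, the paper side-steps any estimate in the far annulus: since the $\psi_n$-portion of the drift is claimed to be pointwise $\leq 0$ (through the $K_0$-damping), it is simply dropped after integrating against $u\geq 0$, and the only surviving term after $n\to\infty$ is the inner-boundary term supported in the fixed annulus $\{R_1-2/m<|x|<R_1\}$, where $u$ is bounded uniformly by \eqref{holder reg for ito ineq0}. That inner-boundary term is what produces the $m$-dependent constant $N(\delta,a,d,m,H,K,R_0,T)$ in \eqref{L1 bound with weight}, and it also explains why the constant does not depend on $R$. You should replace the unsubstantiated super-exponential-decay step by the paper's sign argument, or otherwise give a complete proof of the decay you claim.
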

\begin{proof}

For convenience, set
$$q(x) := |x|e^{\delta|x|}
$$
on $\dR^d$. Choose $R_1\in(R_0\vee1,(R_0\vee1)+1)$.
Let $\psi\in C_c^\infty(\dR)$ be a nonnegative symmetric function satisfying $\psi(z) = 1$ on $|z|<1$ and $\psi(z) = 0$ on $|z|\geq2$ and define $\psi_n(x) := \psi(|x|/n)$ for $n\in \dN$. Take a nonnegative function $\zeta\in C_c^\infty(\dR)$ satisfying $\int_{\dR}\zeta(z)dz = 1$, symmetric, and $\zeta(z) = 0$ on $|z|\geq1$. For $m\in \dN$, define
$$\varrho_{m} (x) := m\int_{\dR} \bi_{|z|>R_1 - \frac{1}{m}} (z)\zeta \left(m \left(|x| - z\right)\right) dz = \int_{\dR} \bi_{\left| |x|-\frac{1}{m}z \right|>R_1 - \frac{1}{m}}(z) \zeta \left(z\right) dz.
$$ 
Additionally, for $g\in L_{1,loc}(\dR^d)$ and $\ep>0$, set $g^{(\ep)}(x)$ as in \eqref{def of mollification}. 
Choose $K_0>0$ such that
\begin{equation}
\label{def of K_0}
K_0 > (5 + 6\delta + \delta^2)K
\end{equation}
where $K$ is the constant introduced in Assumption \ref{assumptions on coefficients}.
Then, for any $t\leq T$ and large $m\in\dN$, since $supp(u_0)\subset  \{ |x| \leq R_0 \}$ and $R_0<R_1$,   we have 
\begin{equation}
\label{calculation to obtain L1 bound 1}
\begin{aligned}
\dE &\left[\int_{Q_{R_1}}e^{-K_0(t\wedge \tau)}u(t\wedge \tau,x)q^{(\ep)}(x)\psi_n(x)dx\right] \\
& \leq\dE \left[\int_{\dR^d}e^{-K_0(t\wedge \tau)}u(t\wedge \tau,x)q^{(\ep)}(x)\psi_n(x)\varrho_m(x)dx\right]  \\
& = \dE\left[\int_0^{t\wedge \tau}\int_{\dR^d}e^{-K_0s}u(s,x)\left[ \cL^*\left(q^{(\ep)}\psi_n\varrho_m\right)(s,x)- K_0q^{(\ep)}(x)\psi_n(x)\varrho_m(x) \right] dxds\right]. \\
\end{aligned} 
\end{equation}
Note that
\begin{equation}
\label{calculation to obtain L1 bound 2}
\begin{aligned}
&\left(q^{(\ep)}(x)\psi_n(x)\varrho_m(x)\right)_{x^i} \\
&\quad= q^{(\ep)}_{x^i}(x)\psi_n(x)\varrho_m(x) + q^{(\ep)}(x)\psi_{nx^i}(x)\varrho_m(x) + q^{(\ep)}(x)\psi_n(x)\varrho_{mx^i}(x),\\
&\left(q^{(\ep)}(x)\psi_n(x)\varrho_m(x)\right)_{x^ix^j} \\
&\quad=q^{(\ep)}_{x^ix^j}(x)\psi_n(x)\varrho_m(x) + q^{(\ep)}(x)\psi_{nx^ix^j}(x)\varrho_m(x) + q^{(\ep)}(x)\psi_n(x)\varrho_{mx^ix^j}(x) \\
&\quad\quad + 2q^{(\ep)}_{x^i}(x)\psi_{nx^j}(x)\varrho_m(x) + 2q^{(\ep)}_{x^i}(x)\psi_n(x)\varrho_{mx^j}(x) + 2q^{(\ep)}(x)\psi_{nx^i}(x)\varrho_{mx^j}(x).
\end{aligned}
\end{equation}

Thus, if we apply \eqref{calculation to obtain L1 bound 2} to the last term of \eqref{calculation to obtain L1 bound 1}, we have
\begin{equation}
\label{calculation to obtain L1 bound 3}
\begin{aligned}
 \cL^* &\left( q^{(\ep)}\psi_n\varrho_m \right)(s,x)  - K_0 \left(q^{(\ep)}\psi_n\varrho_m\right)(x)\\
& = \left( a^{ij}_{x^ix^j} -  b^i_{x^i} + c - K_0\right)\left(q^{(\ep)}\psi_n\varrho_m\right)(x) +  \left( 2 a^{ij}_{x^j} - b^i \right)\left( q^{(\ep)} \psi_n\varrho_m \right)_{x^i}(x) \\
&\quad +  a^{ij} \left( q^{(\ep)}\psi_n\varrho_m \right)_{x^ix^j}(x)\\
&\leq \left[ \left( K - K_0\right)\left( q^{(\ep)}\psi_n \right)(x) + \left( 2a^{ij}_{x^j} - b^i \right)(q^{(\ep)}\psi_n)_{x^i}(x) + a^{ij}(q^{(\ep)}\psi_n)_{x^ix^j}(x)  \right]\varrho_m(x) \\
&\quad + \left[ \left( 2 a^{ij}_{x^j} - b^j \right)\left(q^{(\ep)}\psi_n\right)(x) + 2 a^{ij} \left(q^{(\ep)}\psi_n\right)_{x^j}(x) \right]\varrho_{mx^i}(x)  + a^{ij} \left( q^{(\ep)}\psi_n\varrho_{mx^ix^j} \right)(x).
\end{aligned}
\end{equation}
Observe that on $|x|>R_1>1$, by letting $\ep\downarrow0$,
\begin{equation}
\label{eq:1st derivative of q}
\begin{aligned}
q^{(\ep)}(x) \rightarrow q(x), \quad q^{(\ep)}_{x^i}(x)\rightarrow  q_{x^i}(x)= \left( |x|^{-1} + \delta \right)\frac{x^i}{|x|} q(x),\quad |q_{x^i}(x)|\leq (1+\delta)q(x),
\end{aligned} 
\end{equation} and
\begin{equation}\label{eq:2nd derivative of q}
\begin{aligned}
q^{(\ep)}_{x^ix^j}(x) 
\rightarrow  q_{x^ix^j}(x)
&= \left[ \left( \frac{1}{|x|^2}+\frac{\delta}{|x|} \right)\left( \delta^{ij} - \frac{x^ix^j}{|x|^2} \right) + \left(\frac{2\delta}{|x|}+\delta^2 \right)\frac{x^ix^j}{|x|^2} \right]q(x),  \\
|q_{x^ix^j}(x)|&\leq (2+4\delta+\delta^2)q(x),
\end{aligned} 
\end{equation}	
where $\delta^{ij}$ is the Kronecker delta.
Thus, if we let $n$ be large enough and $\ep\downarrow 0$ in the product rule, by \eqref{eq:1st derivative of q}, and \eqref{eq:2nd derivative of q}, we have that for $|x|>R_1$
\begin{equation}
\label{calculation to obtain L1 bound 4}
\begin{aligned}
\left( K - K_0\right)&q(x)\psi_n(x) + \left( 2a^{ij}_{x^j} - b^i \right)(q(x)\psi_n(x))_{x^i} + a^{ij}(q(x)\psi_n(x))_{x^ix^j}  \\
&=  \left( K - K_0\right)q(x)\psi_n(x) + \left( 2a^{ij}_{x^j} - b^i \right)\left(q_{x^i}(x)\psi_n(x) + q(x)\psi_{nx^i}(x)\right) \\
&\quad+ a^{ij} q_{x^ix^j}(x)\psi_n(x)+2a^{ij}q_{x^i}(x)\psi_{nx^j}(x) + a^{ij}q(x)\psi_{nx^ix^j}(x)  \\ 
% &\leq \left[\left(5+6\delta+\delta^2\right)K - K_0\right] q(x)\psi_n(x)+ \frac{2(2+\delta)K}{n} q(x)\psi_{2n}(x)+ \frac{K}{n^2}q(x)\psi_{2n}(x) \\
&\leq \left[\left(5+6\delta+\delta^2\right)K + N(\delta,K)n^{-1} - K_0\right] q(x)\psi_{2n}(x).\\
\end{aligned}
\end{equation}
To obtain the last inequality, we employ 
\begin{equation*}
|\psi(|x|/n)| + |\psi'(|x|/n)| + |\psi''(|x|/n)|\leq N \psi\left( \frac{|x|}{2n} \right),
\end{equation*}
where $N$ is independent of $n$. Thus, if $n$ large enough, 
\begin{equation}
\label{calculation to obtain L1 bound 4.5}
\begin{aligned}
 \left( K - K_0\right)q(x)\psi_n(x) + \left( 2a^{ij}_{x^j} - b^i \right)(q(x)\psi_n(x))_{x^i} + a^{ij}(q(x)\psi_n(x))_{x^ix^j}  \leq 0.
\end{aligned}
\end{equation}
In addition, observe that
\begin{equation}
\label{calculation to obtain L1 bound 5}
\begin{aligned}
&\limsup_{n\to\infty,\ep\downarrow0}\left[ \left( 2 a^{ij}_{x^j} - b^j \right)\left(q^{(\ep)}\psi_n\right)(x) + 2 a^{ij} \left(q^{(\ep)}\psi_n\right)_{x^j}(x) \right]\varrho_{mx^i}(x)  + a^{ij} \left( q^{(\ep)}\psi_n\varrho_{mx^ix^j} \right)(x)\\
&\quad \leq  \left|  \left( 2 a^{ij}_{x^j} - b^j \right)q(x) + 2 a^{ij} q_{x^i}(x)  \right||\varrho_{mx^j}(x)| + a^{ij} q(x) |\varrho_{mx^ix^j}(x)| \\
&\quad \leq N(d,K)q(x)\left( \left| \varrho_{mx^j}(x) \right| + \left|\varrho_{mx^ix^j}(x)\right| \right).
\end{aligned} 
\end{equation}
Thus, by employing \eqref{calculation to obtain L1 bound 3}, \eqref{calculation to obtain L1 bound 4.5}, and \eqref{calculation to obtain L1 bound 5} to \eqref{calculation to obtain L1 bound 1}, and letting $\ep\downarrow0$ and $n\to\infty$ in order, we have
\begin{equation}
\label{calculation to obtain L1 bound 6}
\begin{aligned}
&\dE \left[\int_{Q_{R_1}}e^{-K_0(t\wedge \tau)}u(t\wedge \tau,x)q(x)dx\right]  \\
&\leq N(d,K)\dE \left[ \int_0^{t\wedge \tau} \int_{ R_1-2m^{-1} < |x| < R_1} e^{-K_0 s} u(s,x)  q(x) \left(\left|\varrho_{mx^j}(x)\right| + \left|\varrho_{mx^ix^j}(x)\right| \right) dxds \right] \\
&\leq N(\delta,d,m,K,R_0)\dE \left[ \int_0^{t\wedge \tau} \int_{ R_1-2m^{-1} < |x| < R_1} e^{-K_0 s} u(s,x) dxds \right].
\end{aligned}
\end{equation}
% where $N = N(\delta,d,m,K,R_0)$. 
% The last inequality of \eqref{calculation to obtain L1 bound 6} follows from the facts that $u$ is nonnegative and $\varrho_{mx^i}(x)$ and $\varrho_{mx^ix^i}(x)$ have compact supports. 
Therefore, by \eqref{holder reg for ito ineq0}
\begin{equation}
\label{calculation to obtain L1 bound 7}
\begin{aligned}
\dE\left[\int_{Q_{R_1}}q(x)u(t,x)dx\right] 
&\leq N\dE\left[\int_0^{t\wedge \tau}\int_{ R_1-2m^{-1} < |x| < R_1} e^{-K_0s}u(s,x) dxds\right] \\
&\leq N\int_{ R_1-2m^{-1} < |x| < R_1} e^{a|x|} dx, \\
& \leq N,
\end{aligned}
\end{equation}
where $N = N(\delta, a,d,m, H,K,R_0,T)$. Since $Q_R \subset Q_{R_1}$ for any $R>(R_0\vee1)+1$, we have \eqref{L1 bound with weight}. To obtain \eqref{L1 bound with weight - limit}, we employ the dominated convergence theorem with the fact that for $R > R_1$,
\begin{equation*}
	\bi_{Q_{R}}(x)q(x)u(t,x) \leq \bi_{Q_{R_1}}q(x)u(t,x).
\end{equation*}
The lemma is proved.
\end{proof}

\begin{proof}[\textbf{Proof of Lemma~\ref{L_1 bound with weight}}]

Since the method of proof is similar to the proof of Lemma \ref{prop:estimation of quadratic variation}, we only sketch the proof.

For $m,n\in\dN$, choose $\psi_n$ and $\phi_m$ as in the proof of Lemma \ref{prop:estimation of quadratic variation}. Fix $m,n\in\dN$, $\ep>0$, and $\ep_1>0$. Recall that $K_1,K_2$ and $N_1$ in \eqref{ineq:condition_of_K1_and_K2}. Then, by applying It\^o's formula and taking expectation, we have
\begin{equation}
\label{ineq:applying ito's formula 2}
\begin{aligned}
& \dE\left[ \int_{\dR^d} e^{K_2\tau}u(\tau,x)M^{(\ep)}(x)\psi_n(x)\phi_m(x)e^{K_1M^{(\ep_1)}(x)} dx\right] \\
& = \dE\left[\int_0^\tau\int_{\dR^d} e^{K_2s}u(s,x) \cL^* \left(  M^{(\ep)}(x)\psi_n(x) \phi_m(x)e^{K_1M^{(\ep_1)}(x)} \right)  dxds \right]\\
& \quad + \dE \left[K_2 \int_0^\tau\int_{\dR^d} e^{K_2s}u(s,x)M^{(\ep)}(x)\psi_n(x) \phi_m(x)e^{K_1M^{(\ep_1)}(x)}dxds\right].
\end{aligned}
\end{equation}
We note that $K_1$ and $K_2$ are plugged in the exponent of exponential functions instead of $-K_1$ and $-K_2$.
Then, if $\ep_1\downarrow0$ in \eqref{ineq:applying ito's formula 2}, then by \eqref{applying adjoint operator to test function}, \eqref{def of D1ep}, \eqref{eq:convergence of M(x)}, \eqref{eq:derivative of M(x)}, and 
\begin{equation*}
a^{ij}_{x^ix^j}  -  b^i_{x^i} + c + K_1 \left( 2 a^{ij}_{x^j} - b^i \right) M_{x^i} + K_1  a^{ij} M_{x^ix^j} + K_1^2  a^{ij} M_{x^i}M_{x^j} \geq -N_1,
\end{equation*}
we have
\begin{equation}
\label{bar A + bar b}
\begin{aligned}
&\dE\left[\int_{\dR^d} e^{K_2\tau}u(\tau,x)M^{(\ep)}(x)\psi_n(x)\phi_m(x)e^{K_1M(x)} dx \right]
\geq \dE\left[\int_0^\tau e^{K_2 s} \left( \bar {\bf{A}}(m,n,\ep) + \bar {\bf{B}}(m,n,\ep) \right) \, ds\right],
\end{aligned}
\end{equation}
where
\begin{align}
\bar {\bf{A}}(m,n,\ep) &:=  \int_{\dR^d}\left[ 2 a^{ij}_{x^j} - b^i + 2K_1 a^{ij}M_{x^j}(x) \right]  u(s,x)\left(\Phi^{m,n,\ep}(x)\right)_{x^i}e^{K_1M(x)} dx,  \label{def of bar A}\\
\bar {\bf{B}}(m,n,\ep)&:= \int_{\dR^d} u(s,x)  \left(a^{ij}\left(\Phi^{m,n,\ep}(x)\right)_{x^ix^j} +  (K_2 - N_1) \Phi^{m,n,\ep}(x) \right)e^{K_1M(x)} dx  \label{def of bar B}.
\end{align}
The function $\Phi^{m,n,\ep}(x)$ is introduced in \eqref{notation Phi for convenience} and the constant $N_1$ is introduced in \eqref{ineq:condition_of_K1_and_K2}.

Next, similar to \textbf{(Step 2)} and \textbf{(Step 3)} of the proof of Lemma \ref{prop:estimation of quadratic variation}, we obtain
\begin{align}
	\lim_{n\to\infty}\lim_{m\rightarrow \infty}\lim_{\ep \rightarrow 0} \bar {\bf A}(m,n,\ep) 
	&\geq   K^{-1}K_1  \int_{Q_R}u(s,x)  e^{K_1M(x)} dx,
	\label{eq:estimation of bar A}\\
	\lim_{n\to\infty}\lim_{m\rightarrow \infty}\lim_{\ep \rightarrow 0} \bar {\bf B}(m,n,\ep)
	&\geq  -2K \int_{Q_R} u(s,x)  e^{K_1M(x)}dx + K^{-1}\int_{\partial Q_R} u(s,\sigma)d\sigma_R.
	\label{eq:estimation of bar B}
\end{align}
The only difference is to consider $-K$ as a lower bound of the coefficients $a^{ij},b^i$, and $c$.

Then, by combining \eqref{bar A + bar b}, \eqref{eq:estimation of bar A}, and \eqref{eq:estimation of bar B} with taking $K_1\geq 2K^2$, we have
\begin{equation*}
\begin{aligned}
\dE\left[ \int_0^\tau\int_{\partial Q_R} u(s,\sigma)d\sigma_Rds \right]
&\leq K\dE\left[\int_{Q_R} e^{K_2\tau}u(\tau,x)M(x)e^{K_1M(x)} dx\right] \\
&\leq K e^{-K_1R}\dE\left[\int_{Q_R} e^{K_2\tau}u(\tau,x)|x|e^{K_1|x|} dx\right], \\
\end{aligned}
\end{equation*}
and thus
\begin{equation*}
	e^{K_1R}\dE\left[\int_0^\tau\int_{\partial Q_R} u(s,\sigma)d\sigma_Rds \right]
	\leq N(K,T)\dE\left[\int_{Q_R} u(\tau,x)|x|e^{K_1|x|} dx\right].
\end{equation*}
Thus, by \eqref{L1 bound with weight - limit}, we have \eqref{eq:integral bound with weight} for all $\delta = K_1\geq 2K^2$ (see \eqref{ineq:condition_of_K1_and_K2}). The case of $\delta\in(0,2K^2)$ also follows immediately from the above display. The lemma is proved.

\end{proof}

%%%%%%%%%%%%%%%%%%%%%%%%%%%%%%%%%%%%%%%%%%%%%%%%%%%%%%%%%%%%%%%%%%%%%%%%%%%%%%%%%%%%%%%%%%%%

\appendix

\section{Proof of Theorem \ref{thm:existence of solution}}
\label{sec:proof of holder conti of sol}

This section is devoted to proving Theorem \ref{thm:existence of solution}. The proof is based on the $L_p$-theory for stochastic partial differential equations; see \cite{kry1996, kry1999analytic}. The $L_p$-theory enables  us to show the existence of a solution  to \eqref{original_equation} in some stochastic Banach spaces  and also  H\"older continuity of the solution through the Sobolev embedding theorem.  We briefly  introduce  the definitions of stochastic Banach spaces below.  For more information, see \cite{grafakos2009modern,kry1999analytic}. 

\begin{definition}[Bessel potential space]
Let $p>1$ and $\gamma \in \dR$. The space $H_p^\gamma=H_p^\gamma(\dR^d)$ is the set of all tempered distributions $u$ on $\dR^d$ satisfying

$$ \| u \|_{H_p^\gamma} := \left\| (1-\Delta)^{\gamma/2} u\right\|_{L_p} = \left\| \cF^{-1}\left[ \left(1+|\xi|^2\right)^{\gamma/2}\cF(u)(\xi)\right]\right\|_{L_p}<\infty.
$$
Similarly, $H_p^\gamma(\ell_2) = H_p^\gamma(\dR^d;\ell_2)$ is the space of $\ell_2$-valued functions $g=(g^1,g^2,\cdots)$ satisfying
$$ \|g\|_{H_{p}^\gamma(\ell_2)}:= \left\| \left| \left(1-\Delta\right)^{\gamma/2} g\right|_{l_2}\right\|_{L_p} = \left\| \left|\cF^{-1}\left[ \left(1+|\xi|^2\right)^{\gamma/2}\cF(g)(\xi)\right]\right|_{\ell_2} \right\|_{L_p}
< \infty. 
$$
For $\gamma = 0$, we set $H_p^0:= L_p$ and $ H_p^0(\ell_2):=L_p(\ell_2)$.
\end{definition}

Below lemma is well-know result for the H\"older embedding theorem for the Bessel potential spaces.

\begin{lemma}
Let $p>1$ and $\gamma\in\dR$. If $\gamma - d/p = \nu$ for some $\nu\in(0,1)$, then  we have
\begin{equation} 
\label{holder embedding}
\left| u \right|_{C(\dR^d)} + \left[ u\right]_{C^\nu(\dR^d)} \leq N \| u \|_{H_{p}^\gamma},
\end{equation}
where $N = N(\gamma,d,p)$.
\end{lemma}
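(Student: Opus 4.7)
The plan is to reduce the embedding to kernel estimates for the Bessel potential $G_\gamma := \cF^{-1}\bigl[(1+|\xi|^2)^{-\gamma/2}\bigr]$ and then apply H\"older's inequality. Given $u \in H_p^\gamma$, set $f := (1-\Delta)^{\gamma/2} u \in L_p(\dR^d)$ so that $\|f\|_{L_p} = \|u\|_{H_p^\gamma}$ and $u = G_\gamma \ast f$ (as tempered distributions, interpreted pointwise once the convolution is shown to be well-defined). The two inequalities then reduce to
\begin{equation*}
\|G_\gamma\|_{L_{p'}} < \infty \qquad \text{and} \qquad \|G_\gamma(\cdot-h) - G_\gamma(\cdot)\|_{L_{p'}} \leq N |h|^\nu \text{ for all } h \in \dR^d,
\end{equation*}
where $p'$ is the H\"older conjugate of $p$. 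Indeed, assuming these, H\"older's inequality gives
\begin{equation*}
|u(x)| \leq \|G_\gamma\|_{L_{p'}} \|f\|_{L_p}, \qquad |u(x)-u(y)| \leq \|G_\gamma(\cdot-(x-y)) - G_\gamma(\cdot)\|_{L_{p'}} \|f\|_{L_p},
\end{equation*}
which together yield \eqref{holder embedding} (after verifying that $u$ admits a continuous representative, which will follow from the same estimates applied to a Schwartz approximation of $f$).

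For the first estimate, I would invoke the classical pointwise bounds on the Bessel kernel (see, e.g., Grafakos \cite{grafakos2009modern}): $G_\gamma$ is smooth away from $0$, decays exponentially at infinity, and, when $0<\gamma<d$, satisfies $G_\gamma(x) \leq C|x|^{\gamma-d}$ near the origin. The condition $\gamma = \nu + d/p$ with $\nu \in (0,1)$ forces $\gamma > d/p$, equivalently $(d-\gamma)p' < d$, so the local singularity $|x|^{(\gamma-d)p'}$ is integrable on the unit ball, while the exponential decay handles the tail. Hence $G_\gamma \in L_{p'}(\dR^d)$.

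For the second estimate, I would split the integral defining $\|G_\gamma(\cdot-h) - G_\gamma(\cdot)\|_{L_{p'}}^{p'}$ into the region $\{|z| \leq 2|h|\}$, where one bounds each term crudely using the local $|x|^{\gamma-d}$ singularity, and the region $\{|z| > 2|h|\}$, where the mean value theorem combined with the gradient estimate $|\nabla G_\gamma(z)| \leq C|z|^{\gamma-d-1}$ applies. A direct computation with these bounds yields
\begin{equation*}
\|G_\gamma(\cdot-h) - G_\gamma(\cdot)\|_{L_{p'}} \leq N |h|^{\gamma-d/p} = N|h|^\nu
\end{equation*}
for $|h|\leq 1$; for $|h| \geq 1$ the bound is trivial from the triangle inequality and the first estimate. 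The exponent $\nu$ arises precisely because after the splitting, dimensional balancing of $|h|$ against the radial integral $\int_0^{|h|} r^{(\gamma-d)p'+d-1}dr$ (and its counterpart with the extra $|h|^{p'}$ and $r^{-p'}$ factors on the far region) produces $|h|^{(\gamma-d/p)p'}$.

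The main obstacle is keeping the kernel estimates sharp enough to capture the exact H\"older exponent $\nu = \gamma - d/p$; the arguments above are standard but tedious, and an alternative (cleaner but heavier) route would be to factor through the Besov embedding $H_p^\gamma = F_{p,2}^\gamma \hookrightarrow B_{p,p}^\gamma \hookrightarrow B_{\infty,\infty}^{\gamma-d/p} = C^\nu$ via a Littlewood--Paley decomposition. Since the result is classical, a citation to a standard text (e.g., \cite{grafakos2009modern} or Bergh--L\"ofstr\"om) in lieu of reproducing the proof would suffice for the purposes of the present paper.
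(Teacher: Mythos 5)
The paper does not actually prove this lemma: its ``proof'' consists solely of the reference ``see \cite[Theorem 13.8.1]{krylov2008lectures}.'' Your sketch supplies the argument that the paper delegates --- the classical Bessel-kernel proof. Writing $u = G_\gamma\ast f$ with $f = (1-\Delta)^{\gamma/2}u$ and reducing both inequalities to the kernel bounds $\|G_\gamma\|_{L_{p'}}<\infty$ and $\|G_\gamma(\cdot-h)-G_\gamma\|_{L_{p'}}\le N|h|^\nu$ is the standard route, and the near/far splitting you describe does produce the exponent $\nu=\gamma-d/p$: on $\{|z|\le 2|h|\}$ the local singularity integrates to $|h|^{(\gamma-d)p'+d}$, on $\{|z|>2|h|\}$ the mean value theorem with $|\nabla G_\gamma(z)|\lesssim |z|^{\gamma-d-1}$ gives the matching power, and the exponential decay handles $|z|>1$. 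Your closing remark that a citation would suffice for the paper's purposes is exactly what the authors chose.

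One point to tighten. The hypotheses do not force $\gamma<d$. Since $\gamma=\nu+d/p\in(d/p,\,1+d/p)$ and $p>1$ may be close to $1$, the case $\gamma\ge d$ does occur (for instance $d=1$ with $\nu+1/p\ge1$, or $d\ge 2$ with $p$ close to $1$ and $\nu$ near $1$). There the local estimate $G_\gamma(x)\le C|x|^{\gamma-d}$ you invoke is no longer the relevant one: the kernel is logarithmically singular when $\gamma=d$ and is bounded near the origin with $G_\gamma(x)=G_\gamma(0)+O(|x|^{\gamma-d})$ when $\gamma>d$. The membership $G_\gamma\in L_{p'}$ is only easier in that regime, but the near-region part of your modulus-of-continuity estimate must be rerun with the correct local behaviour; for $\gamma>d$ one instead bounds $|G_\gamma(z-h)-G_\gamma(z)|\le C|h|^{\gamma-d}$ on $\{|z|\le 2|h|\}$, which after integrating still yields $|h|^{\gamma-d+d/p'}=|h|^\nu$. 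The paper's own Remark~\ref{Kernel} already records all three regimes of $A_\gamma$, so the pointwise bounds you need for the case split are at hand in the manuscript.
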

\begin{proof}
For example, see \cite[Theorem 13.8.1]{krylov2008lectures}.
\end{proof}

\begin{remark} \label{Kernel}
It is well-known that for $\gamma\in (0,\infty)$ and $u\in \cS$, we have
\begin{equation*}
(1-\Delta)^{-\gamma/2}u(x)=\int_{\dR^d}R_{\gamma}(x-y)u(y)dy,
\end{equation*}
where 
\begin{equation*} 
|R_\gamma(x)| \leq N(\gamma,d)\left(e^{-|x|/2}\bi_{|x|\geq2} + A_\gamma(x)\bi_{|x|<2}\right)
\end{equation*}
and
\begin{equation*}
\begin{aligned}
A_{\gamma}(x):=
\begin{cases}
|x|^{\gamma-d} + 1 + O(|x|^{\gamma-d+2}) \quad &\mbox{if} \quad 0<\gamma<d,\\ 
\log(2/|x|) + 1 + O(|x|^{2}) \quad &\mbox{if} \quad \gamma=d,\\ 
1 + O(|x|^{\gamma-d}) \quad &\mbox{if} \quad \gamma>d.
\end{cases}
\end{aligned}
\end{equation*}
Also, we have
$$ (R_{\gamma} \ast R_{\gamma})(x) = R_{2\gamma}(x).
$$
For more information, see \cite[Proposition 1.2.5.]{grafakos2009modern}.

\end{remark}

Recall that $(\Omega, \cF, \dP)$ is a complete probability space with a filtration $\{\cF_t\}_{t\geq0}$ satisfying the usual conditions and $\cP$ is the predictable $\sigma$-field related to $\cF_t$. Next we introduce definitions and properties of stochastic Banach spaces.

\begin{definition}[Stochastic Banach spaces]
For a bounded stopping time $\tau\leq T$, let us denote $\opar0,\tau\cbrk:=\{ (\omega,t):0<t\leq \tau(\omega) \}$.
\begin{enumerate}[(i)]
\item \label{def:Hp,Up}
For $\tau\leq T$, 
\begin{gather*}
\dH_{p}^{\gamma}(\tau) := L_p(\opar0,\tau\cbrk, \mathcal{P}, d\dP \times dt ; H_{p}^\gamma),\\
\dH_{p}^{\gamma}(\tau,\ell_2) := L_p(\opar0,\tau\cbrk,\mathcal{P}, d\dP \times dt;H_{p}^\gamma(\ell_2)),\\
U_{p}^{\gamma} :=  L_p(\Omega,\cF_0, d\dP ; H_{p}^{\gamma-2/p}).
\end{gather*}	
For convenience, we write $\dL_p(\tau):=\dH^{0}_{p}(\tau)$ and $\dL_p(\tau,\ell_2):=\dH^{0}_{p}(\tau,\ell_2)$.

\item \label{def:Hp-gamma}
The norm of each space is defined in the natural way. For example, 
\begin{equation} \label{norm}
\| u \|^p_{\dH^{\gamma}_{p}(\tau)} := \dE \left[\int^{\tau}_0 \| u(t) \|^p_{H^{\gamma}_{p}}dt\right]. 
\end{equation}

\end{enumerate}
\end{definition}

\begin{definition}
\label{def of cH}
Let $\tau\leq T$ be a bounded stopping time and $u \in \dH_p^{\gamma}(\tau)$.
\begin{enumerate}[(i)]
\item 
We write $u\in\cH^{\gamma}_p(\tau)$ if $u_0\in U_{p}^{\gamma}$ and there exists $(f,g)\in
\dH_{p}^{\gamma-2}(\tau)\times\dH_{p}^{\gamma-1}(\tau,\ell_2)$ such that
\begin{equation*}
du = fdt+\sum_{k=1}^{\infty} g^k dw_t^k,\quad   t\in (0, \tau]\,; \quad u(0,\cdot) = u_0
\end{equation*}
in the sense of distributions, i.e., for any $\phi\in C_c^\infty$, the equality
\begin{equation} \label{def_of_sol_2}
(u(t,\cdot),\phi) = (u_0,\phi) + \int_0^t(f(s,\cdot),\phi)ds + \sum_{k=1}^{\infty} \int_0^t(g^k(s,\cdot),\phi)dw_s^k
\end{equation}
holds for all $t\in [0,\tau]$ almost surely.
In this case, we write
\begin{equation*}
\dD u:= f,\quad \dS u:=g.
\end{equation*}

\item
The norm of $\cH_{p}^{\gamma}(\tau)$ is defined by
\begin{equation*}
\| u \|_{\cH_{p}^{\gamma}(\tau)} :=  \| u \|_{\dH_{p}^{\gamma}(\tau)} + \| \dD u \|_{\dH_{p}^{\gamma-2}(\tau)} + \| \dS u \|_{\dH_{p}^{\gamma-1}(\tau,\ell_2)} + \| u(0,\cdot) \|_{U_{p}^{\gamma}}.
\end{equation*}

\end{enumerate}
\end{definition}

Next, we introduce the H\"older embedding theorem for $\cH_p^\gamma(\tau)$.

\begin{theorem}
\label{embedding theorems for cH}
\begin{enumerate}[(i)]
\item  \label{embedding in time}
If $\gamma\in\dR$, $p>2$, $1/2>\beta>\alpha>1/p$, then for any $u\in\cH_p^\gamma(\tau)$, we have $u\in C^{\alpha-1/p}([0,\tau];H_p^{\gamma-2\beta})$ almost surely and 
\begin{equation*}
\dE\| u \|_{C^{\alpha-1/p}([0,\tau];H_p^{\gamma-2\beta})}^p \leq N(\alpha,\beta,d,p,T)\| u \|_{\cH_p^\gamma(\tau)}^p.
\end{equation*}

\item  \label{embedding in time and space}
If $\gamma\in(0,1)$, $p>2$, $\alpha,\beta\in(0,\infty)$ satisfy
\begin{equation*}
\frac{1}{p} < \alpha < \beta < \frac{1}{2}\left( \gamma - \frac{d}{p} \right),
\end{equation*}
then for any $u\in\cH_p^\gamma(\tau)$, we have $u\in C^{\alpha-1/p}([0,\tau];C^{\gamma-2\beta-d/p})$ almost surely and 
\begin{equation*}
\dE\| u \|_{C^{\alpha-1/p}([0,\tau];C^{\gamma-2\beta-d/p})}^p \leq N(\alpha,\beta,d,p,T)\| u \|_{\cH_p^\gamma(\tau)}^p.
\end{equation*}

\end{enumerate}
\end{theorem}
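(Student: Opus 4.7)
The plan is to prove part (i) by combining a moment estimate for the increments with the Kolmogorov--Chentsov continuity theorem, and then to deduce part (ii) by composing with the spatial Sobolev embedding \eqref{holder embedding}.

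For part (i), I would start from the distributional identity
\[u(t)-u(s) = \int_s^t \dD u(r)\, dr + \sum_{k=1}^\infty \int_s^t \dS u^k(r)\, dw^k_r\]
on $[0,\tau]$, which follows from $u\in\cH_p^\gamma(\tau)$ and the definition \eqref{def_of_sol_2}. The aim is to establish the moment bound
\[\dE\|u(t)-u(s)\|_{H_p^{\gamma-2\beta}}^p \le N(\beta,d,p,T)\,|t-s|^{p\beta}\,\|u\|_{\cH_p^\gamma(\tau)}^p\]
uniformly in $0\le s \le t\le \tau$. Combined with a pointwise-in-time bound on $\dE\|u(t)\|_{H_p^{\gamma-2\beta}}^p$, which follows from $\beta>1/p$ and the trace embedding of $\cH_p^\gamma(\tau)$ into $C([0,\tau]; H_p^{\gamma-2/p})$ (see, e.g., \cite{kry1999analytic}), the Kolmogorov--Chentsov theorem would then produce a modification of $u$ lying in $C^{\alpha-1/p}([0,\tau]; H_p^{\gamma-2\beta})$ for every $\alpha\in(1/p,\beta)$, with the corresponding moment estimate. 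For the drift contribution, Minkowski in space and H\"older in time would give a factor of $(t-s)^{1-1/p}$. For the stochastic contribution, I would invoke the factorization identity
\[\sum_k\int_s^t \dS u^k\, dw^k_r = \frac{\sin(\pi\beta)}{\pi}\int_s^t (t-r)^{\beta-1}\,Y(r)\,dr, \qquad Y(r):=\sum_k\int_s^r (r-\rho)^{-\beta}\,\dS u^k\, dw^k_\rho,\]
which is legitimate precisely because $\beta<1/2$, so that the singular kernel $(r-\rho)^{-\beta}$ is square-integrable in $\rho$. The Burkholder--Davis--Gundy inequality would bound the $H_p^{\gamma-1}$-valued moments of $Y(r)$, and deterministic integration of the kernel $(t-r)^{\beta-1}$ would then deliver the gain $(t-s)^{\beta-1/p}$ in time.

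The main obstacle will be the mismatch in spatial regularity: the drift sits in $H_p^{\gamma-2}$ and the diffusion in $H_p^{\gamma-1}$, yet the target norm $H_p^{\gamma-2\beta}$ is strictly stronger than both (since $\beta<1/2$). Individually, neither component of the decomposition lies in $H_p^{\gamma-2\beta}$; only their sum does, by virtue of $u\in\dH_p^\gamma$. I would bridge this gap via complex-interpolation inequalities of the form
\[\|w\|_{H_p^{\gamma-2\beta}}\le \|w\|_{H_p^{\gamma-2}}^{1-\beta}\,\|w\|_{H_p^\gamma}^\beta\]
and the analogous one relating $H_p^{\gamma-1}$, $H_p^\gamma$, and $H_p^{\gamma-2\beta}$, making crucial use of the pointwise-in-time bound on $\|u(t)\|_{H_p^{\gamma-2/p}}$ noted above. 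After summing the drift and stochastic prefactors, the stochastic one should dominate and produce the required exponent $p\beta$.

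Part (ii) would then be immediate from part (i). Under the hypothesis $\beta<(\gamma-d/p)/2$, the quantity $\gamma-2\beta-d/p$ lies in $(0,1)$, so \eqref{holder embedding} furnishes the continuous embedding $H_p^{\gamma-2\beta}\hookrightarrow C^{\gamma-2\beta-d/p}(\dR^d)$. Applying this embedding pointwise in $t$ to the version of $u$ constructed in part (i) would transfer the H\"older-in-time estimate to values in $C^{\gamma-2\beta-d/p}(\dR^d)$, yielding the claimed regularity together with the $L_p(\Omega)$ moment bound.
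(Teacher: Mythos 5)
Your part (ii) matches the paper exactly: derive it from part (i) and the spatial embedding \eqref{holder embedding}. For part (i), however, the paper simply cites \cite[Theorem 7.2]{kry1999analytic} and does not re-derive it; you attempt a self-contained proof, which is more ambitious, and the sketch has a real gap at its central step.

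You correctly identify the obstacle: the raw increment decomposition
\[
u(t)-u(s) = \int_s^t \dD u(r)\, dr + \sum_{k}\int_s^t \dS u^k(r)\, dw^k_r
\]
places the drift contribution in $H_p^{\gamma-2}$ and the stochastic contribution in $H_p^{\gamma-1}$, while the target $H_p^{\gamma-2\beta}$ is strictly stronger than both (since $\beta<\tfrac12$). The factorization identity only reorganizes the time dependence of the stochastic integral; it does not gain spatial regularity, so $Y(r)$ still lives in $H_p^{\gamma-1}$, not in $H_p^{\gamma-2\beta}$. Your proposed bridge, the interpolation inequality $\|w\|_{H_p^{\gamma-2\beta}}\le \|w\|_{H_p^{\gamma-2}}^{1-\beta}\|w\|_{H_p^{\gamma}}^{\beta}$ (the exponents should be $\beta$ and $1-\beta$ respectively, but that is a typo), requires a pointwise-in-time control of $\|u(t)-u(s)\|_{H_p^{\gamma}}$, which you do not have: $u\in\dH_p^\gamma(\tau)$ only gives $\int_0^\tau\|u(t)\|_{H_p^\gamma}^p\,dt<\infty$, and the trace embedding only gives $\sup_t\|u(t)\|_{H_p^{\gamma-2/p}}$, which is weaker. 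So the interpolation step does not close as stated. The mechanism that actually produces the extra spatial regularity in Krylov's proof is the parabolic smoothing of the heat semigroup: one first reduces to the model equation $du=(\Delta u + f)dt + g^k dw_t^k$ (the general $\cH_p^\gamma$ case reduces to this via uniqueness), writes $u(t)-u(s) = (T_{t-s}-I)u(s) + \int_s^t T_{t-r}f\,dr + \int_s^t T_{t-r}g^k\,dw_r^k$, and uses $\|(T_h-I)(1-\Delta)^{-\beta}\|\lesssim h^{\beta}$ together with the smoothing $\|T_h\|_{H_p^{m}\to H_p^{m+\varepsilon}}\lesssim h^{-\varepsilon/2}$ inside the convolutions. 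It is the presence of $T_{t-r}$ in these convolutions, absent from your raw decomposition, that buys back the missing spatial derivatives; the factorization method is then applied to the stochastic convolution $\int_s^t T_{t-r}g\,dW_r$, not to $\int_s^t g\,dW_r$. Without this semigroup structure your argument cannot reach $H_p^{\gamma-2\beta}$.
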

\begin{proof}
For \eqref{embedding in time}, see Theorem 7.2 of \cite{kry1999analytic}. For \eqref{embedding in time and space}, combine \eqref{holder embedding} and \eqref{embedding in time}. In other words, 
\begin{equation*}
\dE\| u \|_{C^{\alpha-1/p}([0,\tau];C^{\gamma-2\beta - d/p}(\dR^d))}^p \leq N\dE\| u \|_{C^{\alpha-1/p}([0,\tau];H_p^{\gamma-2\beta})}^p \leq N\|  u  \|_{\cH_p^{\gamma}(\tau)}.
\end{equation*}
\end{proof}

Note that if $h(t, x, u)$ is globally Lipschitz in $u$, the $L_p$-theory says that there is a unique solution $u \in \cH^{\gamma}_p(\tau)$ for some $\gamma>0$ to \eqref{original_equation} (see \cite[Theorem 6]{ferrante2006spdes} or \cite[Theorem 3.5]{choi2021regularity}). However, since $h(t, x, u)$ is not globally Lipschitz in $u$ (see Assumption \ref{assumptions on h}), we approximate $h$ by a sequence of Lipschitz functions, and then show that the sequence is tight and the limit would be a solution to \eqref{original_equation}. 

Let us take a  symmetric function $\psi\in C_c^\infty(\dR)$ such that $0\leq \psi \leq 1$, $\psi(z) = 1$ on $|z|\leq 1$, $\psi(z) = 0$ on $|z|\geq 2$, and $\sup_{z\in\dR}|\psi'(z)|\leq 1$. {2 Let $n\geq 1$ and} set $\psi_n(z) := \psi(z/n)$. 
Choose $\zeta\in C_c^\infty(\dR)$ such that $\int_{\dR} \zeta dz = 1$ and $\zeta(z) = 0$ if $z<0$ or $z>1$. Define, for all $n\geq 1$,
\begin{equation}
\label{def of hn}
h_n(t,x,u) := n\int_{\dR} h(t,x,z)\zeta\left(n(u-z)\right)dz\psi_n(u) = \int_0^1 h(t,x,u-z/n)\zeta\left(z\right)dz\psi_n(u).
\end{equation}
By Assumption \ref{assumptions on h}, it is easy to see  that  
\begin{equation}
\label{nonnegativity of h_n}
h_n(t,x, 0)  = 0,
\end{equation}
\begin{equation*}
|h_n(t,x,u)| \leq N (1+u),
\end{equation*}
\begin{equation}
\label{assumption of hn:Lipshcitz}
|h_n(t,x,u) - h_n(t,x,v)| \leq N_n |u-v|,
\end{equation}
and
\begin{equation*}
|h_n(t,x,u) - h(t,x,u)|\to0
\end{equation*}
uniformly on compacts as $n\to\infty$. The constant $N_n$ introduced in \eqref{assumption of hn:Lipshcitz} is a positive constant only depending on $n$, and the other constant $N$ does not depend on $n$. 

\vspace{1mm}

The following lemma shows the existence of a \textit{nonnegative} solution to \eqref{original_equation} with $h_n$ instead of $h$ and provides a  uniform estimate of  the moments of $u_n$ in some sense (see \eqref{holder reg of un} and \eqref{bound of u_n}). 
\begin{lemma}
Let $\tau \leq T$ be a bounded stopping time. Suppose that for each $n\in \dN$, $h_n$ is the function defined in \eqref{def of hn} and $\eta\in(0,1]$ is the constant introduced in Assumption \ref{assumption on f}. Then, for any $p\geq2$, there exists $u_n\in\cH_p^\eta(\tau)$ such that $u_n\geq0$, and for any $\phi\in\cS$, the equality
\begin{equation}
\label{cut-off eq}
(u_n(t,\cdot),\phi) = (u_0,\phi)+\int_0^t(u_n(s,\cdot), (\cL^*\phi)(s,\cdot)) ds + \int_0^t\int_{\dR^d}h_n(s,x,u_n(s,x))\phi(x) F(ds,dx)
\end{equation}
holds for all $t\leq \tau$ almost surely, where $\cL^*$ is given  in \eqref{eq:adjoint}. Furthermore, if $\alpha,\beta > 0 $ and $p > 2$ satisfy
\begin{equation*}
\frac{1}{p}<\alpha<\beta<\frac{\eta}{2}-\frac{d}{2p},
\end{equation*}
then $u_n\in C^{\alpha-1/p}([0,\tau];C^{\eta-2\beta-d/p}(\dR^d))$ almost surely. In addition, for $a>0$, we have 
\begin{equation}
\label{holder reg of un}
\sup_{n\geq 1}   \dE\left|  u_n  \Psi_{a/p} \right|^p_{C^{\alpha-\frac{1}{p}}([0,\tau];C^{\eta-2\beta-\frac{d}{p}}(\dR^d) )}\leq  N + N\| u_0 \Psi_{a/p} \|_{U_p^\eta}^p,
\end{equation}
where $\Psi_{a/p}(x) := \frac{1}{\cosh(a|x|/p)}$ and $N = N(\alpha,\beta,\eta,\lambda,a,d,p,K,T)$.
In particular, 
\begin{equation}
\label{bound of u_n}
\sup_{n\geq 1}  \dE\left[\sup_{t\leq \tau,x\in\dR^d}(u_n(t,x))^p e^{-a|x|}\right] \leq  N,
\end{equation}
where $N =N(\eta,\lambda,a,d,p,K,T,u_0)$.
\end{lemma}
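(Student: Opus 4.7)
The plan is to proceed in three stages: existence of $u_n$ for each fixed $n$, nonnegativity, and the uniform-in-$n$ regularity estimates. For the first stage, since $h_n$ is globally Lipschitz in $u$ by \eqref{assumption of hn:Lipshcitz} (with constant $N_n$ possibly depending on $n$) and satisfies the linear growth bound $|h_n(t,x,u)|\le K(1+|u|)$ uniformly in $n$, the classical $L_p$-theory for SPDEs with spatially homogeneous noise satisfying the reinforced Dalang condition---as developed in \cite{ferrante2006spdes,choi2021regularity}---applies. This yields a unique solution $u_n\in\cH_p^{\eta}(\tau)$ to the approximated equation \eqref{cut-off eq}. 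Since $\eta-2\beta-d/p>0$ and $\alpha>1/p$ for the chosen parameters, applying Theorem~\ref{embedding theorems for cH}\eqref{embedding in time and space} gives $u_n\in C^{\alpha-1/p}([0,\tau];C^{\eta-2\beta-d/p}(\dR^d))$ almost surely.

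For nonnegativity, the key observation is that $h_n(t,x,u)=0$ whenever $u\le 0$. Indeed, if $u\le 0$ then $u-z/n\le 0$ for $z\in(0,1)$, so Assumption~\ref{assumptions on h}(ii) gives $h(t,x,u-z/n)=0$, hence $h_n(t,x,u)=0$ from \eqref{def of hn}. Thus $u_n$ is also a solution of the \emph{linear} SPDE $\partial_t v = \cL v + \tilde h_n(t,x)\dot F$ with $\tilde h_n(t,x):=h_n(t,x,u_n(t,x))\bi_{u_n(t,x)>0}$, which is predictable and bounded. By the $L_p$-theory a comparison argument (e.g., the ones in Krylov's framework or as in \cite[Section 3]{choi2021regularity}) against the trivial solution $v\equiv 0$ with the same noise coefficient gives $u_n\ge 0$ almost surely; alternatively, one can apply It\^o's formula to $(u_n)_-^2$ using a mollified version of the equation and conclude $(u_n)_-\equiv 0$ from $(u_0)_-=0$ and the fact that the stochastic integral vanishes on the set $\{u_n\le 0\}$.

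The main obstacle is the uniform estimate \eqref{holder reg of un}, since the Lipschitz constant of $h_n$ blows up in $n$. To overcome this, I will exploit only the linear growth $|h_n(t,x,u)|\le K(1+u)$, which is uniform in $n$. The strategy is to work with the weighted process $v_n:=u_n\Psi_{a/p}$ (or equivalently introduce the weight inside weighted Bessel-potential spaces) and apply the weighted $L_p$-theory: the growth bound yields
\begin{equation*}
\|u_n\Psi_{a/p}\|_{\cH_p^{\eta}(t\wedge\tau)}^p \le N\|u_0\Psi_{a/p}\|_{U_p^\eta}^p + N\int_0^{t}\!\!\bigl(1+\|u_n\Psi_{a/p}\|_{\cH_p^\eta(s\wedge\tau)}^p\bigr)ds
\end{equation*}
(the multiplicative weight is absorbed using that $\Psi_{a/p}$ and its derivatives up to the required order are bounded, and the spatially homogeneous covariance structure interacts well with the exponential-type weight since $\Psi_{a/p}(x+y)/\Psi_{a/p}(x)$ is controlled by $e^{a|y|/p}$). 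Gronwall's inequality then yields a bound independent of $n$ for $\|u_n\Psi_{a/p}\|_{\cH_p^\eta(\tau)}^p$, and the Hölder estimate \eqref{holder reg of un} follows from Theorem~\ref{embedding theorems for cH}\eqref{embedding in time and space} applied to $u_n\Psi_{a/p}$. Finally, \eqref{bound of u_n} is a direct consequence of \eqref{holder reg of un}: since $\Psi_{a/p}(x)\asymp e^{-a|x|/p}$, we have $\sup_{t,x}(u_n\Psi_{a/p})^p = \sup_{t,x}u_n^p e^{-a|x|}$ up to constants, and the $C^{\alpha-1/p}([0,\tau];C^{\eta-2\beta-d/p})$-norm dominates the sup-norm, giving the desired uniform sup-type bound.
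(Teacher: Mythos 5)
Your proposal follows the same three-step structure as the paper's proof: (1) existence of $u_n$ via the globally Lipschitz $h_n$ and the $L_p$-theory of \cite{ferrante2006spdes,choi2021regularity} together with the embedding Theorem~\ref{embedding theorems for cH}; (2) nonnegativity from the fact that $h_n(t,x,u)=0$ for $u\le 0$; (3) uniform-in-$n$ estimates by introducing $v_n:=u_n\Psi_{a/p}$, applying Krylov's $L_p$ estimate to the transformed equation, and closing with Gronwall, with \eqref{bound of u_n} following from the Hölder bound since $e^{-a|x|}\le\Psi_{a/p}(x)^p$.

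The one place that is sketchy is the nonnegativity step. Your first route, ``a comparison argument against the trivial solution $v\equiv 0$ with the same noise coefficient,'' is not well posed as written: once you freeze the coefficient as $\tilde h_n(t,x)=h_n(t,x,u_n(t,x))\bi_{u_n>0}$, the resulting equation has \emph{additive} noise and $v\equiv 0$ is no longer a solution of it, so there is nothing to compare against. The comparison you want is at the level of the nonlinear equation (zero initial data versus $u_0\geq 0$, using $h_n(\cdot,\cdot,0)=0$), which is exactly the content of the maximum principle the paper invokes. The paper handles this carefully: it introduces $\bar h_n(t,x,u)=h_n(t,x,u_+)$, truncates the noise to finitely many Wiener processes so that \cite[Theorem~1.1]{krylov2007maximum} applies directly, obtains nonnegative approximants $u_{n,m}$, and then passes to the limit $m\to\infty$ as in \cite[Appendix~A]{choi2021regularity}. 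Your second route (Itô's formula applied to a mollification of $u\mapsto u_-^2$) is a legitimate alternative in spirit, but would also need the same kind of finite-dimensional or spatial-mollification care; as stated it is a plan rather than a proof. Apart from this, your outline matches the paper and is correct.
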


\begin{proof}
We first note  that the stochastic integral part in  \eqref{cut-off eq} can be written as  
\begin{equation*}
\int_0^t\int_{\dR^d}h_n(s,x,u_n(s,x))\phi(x) F(ds,dx) =\sum_{k=1}^\infty \int_0^t\int_{\dR^d}h_n(s,x,u_n(s,x))\phi(x) (f \ast e_k)dxdw_s^k,
\end{equation*} 
where $e_k$ and $w_s^k$ are introduced in Remark \ref{random measure}. This allows us to use the $L_p$-theory of SPDEs (\cite{kry1996, kry1999analytic}). Indeed, since $h_n(s,x,u)$ is Lipschitz in $u$, \cite[Theorem 6]{ferrante2006spdes} (or \cite[Theorem 3.5]{choi2021regularity}) yields that there exists a unique solution $u_n\in \cH_p^{\eta}(\tau)$ satisfying \eqref{cut-off eq}. Furthermore, by Theorem \ref{embedding theorems for cH} \eqref{embedding in time and space}, we have $u_n\in C^{\alpha-1/p}([0,\tau];C^{\eta-2\beta-d/p}(\dR^d))$ almost surely.

We now show  non-negativity of $u_n$.  Define  $\bar h_n(t, x, u):= h_n(t, x, u)\bi_{u>0}=h_n(t, x, u_+)$ and consider the following SPDE
\begin{equation}\label{eq:finiteSPDE}
\begin{aligned}
\partial_t \bar u_n(\omega,t,x) & = \cL \bar u_n(\omega,t,x)\,+ \sum_{k=1}^\infty \bar{h}_n(\omega,t,x,\bar u_n(t,x)) (f*e_k)(x) dw_t^k  \quad  (t,x)\in (0,\infty)\times\dR^d
\end{aligned}
\end{equation} 
with initial data $\bar u_n(0,\cdot) = u_0(\cdot)\geq 0.$
Since $\bar{h}_n$ is also Lipschitz in $u$, there exists a unique solution $\bar u_n \in \cH_p^{\eta}(\tau)$ to \eqref{eq:finiteSPDE}. Here, if we show $\bar u_n\geq 0$, then $\bar h_n(t, x, \bar u_n)=h_n(t, x, \bar u_{n+})=h_n(t, x, \bar u_n)$. Thus, by the uniqueness, $\bar u_n=u_n$, which results in the non-negativity of $u_n$.  
Now, for the non-negativity of $\bar u_n$, it just follows from the proof of \cite[Appendix A]{choi2021regularity} (see also \cite[Theorem 1.1]{krylov2007maximum};  the main idea is to consider the finite sum ($\sum_{k=1}^m$) in  the noise part in \eqref{eq:finiteSPDE} whose solutions (say $u_{n, m}$) are nonnegative, thanks to \cite[Theorem 1.1]{krylov2007maximum}, and then show the convergence $u_{n, m} \to u_n$ as $m\to \infty$).

To obtain \eqref{holder reg of un} and \eqref{bound of u_n}, for $a,p>0$, define $\Psi(x) = \Psi_{a/p}(x) = \frac{1}{\cosh(a|x|/p)}$. 
As in  \cite[Lemma 5.5]{choi2021regularity}, we have
\begin{equation}
\label{derivatives of hypercosine}
\begin{aligned}
\Psi_{x^i}(x) \leq \frac{a}{p}\Psi(x)\quad\text{and}\quad\Psi_{x^ix^j}(x) \leq \frac{a^2}{p^2}\Psi(x).
\end{aligned}
\end{equation}
Set
$v_{n}(t,x) := u_n(t,x)\Psi(x)$. Then, for any $\phi \in \cS$, $v_{n}(t,x)$ satisfies
\begin{equation}
\label{eq for v}
\begin{aligned}
(v_{n}(t,\cdot),\phi) &= (\Psi u_0,\phi)+\int_0^t(v_{n}(s,\cdot), (\cL^*\phi)(s,\cdot)) + (g(s,\cdot),\phi) ds \\
&\quad\quad + \int_0^t\int_{\dR^d}h_n(s,x,u_n(s,x))\phi(x) (f\ast e_k)dxdw_s^k,
\end{aligned}
\end{equation}
where $\cL^*$ is in \eqref{eq:adjoint}, and 
\begin{equation}
\label{def of g}
g := u_n( a^{ij}\Psi_{x^ix^j} + 2a^{ij}_{x^i}\Psi_{x^j} - b^i\Psi_{x^i} ) - (2u_na^{ij}\Psi_{x^i})_{x^j}.
\end{equation}
Since $u_n\in \cH_p^{\eta}(\tau)\subset \dH_p^{\eta}(\tau)$, we have
\begin{equation}\label{eq:drifts}
\begin{gathered}
a^{ij}v_{nx^ix^j} + b^{i}v_{
n x^i} + cv_{n} + g \in \dH_p^{\eta-2}(\tau) \quad\text{and}\quad
\Psi u_0 \in L_p(\Omega,\cF_0,H_p^{\eta-2/p}).
\end{gathered}
\end{equation} 
In addition, if we set $\bm{\nu} := (f\ast e_1,f\ast e_2,\dots)$, then we have
\begin{equation}
\label{ineq: bound of the stochastic part}
h_n(u_n)\Psi\bm{\nu} \in \dH_p^{\eta-1}(\tau,\ell_2).
\end{equation}
Indeed, to see \eqref{ineq: bound of the stochastic part}, observe that
\begin{equation*}
\begin{aligned}
&|(1-\Delta)^{-\frac{1-\eta}{2}}(h_n(u_n)\Psi\bm{\nu})(x)|_{\ell_2}^2\\
&\quad=\sum_{k=1}^{\infty}|(1-\Delta)^{-\frac{1-\eta}{2}}(h_n(u_n)\Psi (f\ast e_k))(x)|^2 \\
&\quad=\sum_{k=1}^{\infty}|R_{1-\eta}*(h_n(u_n)\Psi(f*e_k))(x)|^2 \\
&\quad=\sum_{k=1}^{\infty}|(R_{1-\eta}(x-\cdot)h_n(u_n)\Psi,f*e_k)|^2 \\
&\quad =\int_{\dR^d\times\dR^d}R_{1-\eta}(x-(y-z))R_{1-\eta}(x+z)(h_n(u_n)\Psi)(y-z)(h_n(u_n)\Psi)(-z)dzf(dy).
\end{aligned}
\end{equation*} 
Thus, by Minkowski's inequality and H\"older's inequality,
\begin{equation}
\label{l2 calculation}
\begin{aligned}
&\| h_n(u_n)\Psi \bm{\nu} \|^p_{H_p^{\eta-1}(\ell_2)} \\
& = \int_{\dR^d}|(1-\Delta)^{-\frac{1-\eta}{2}}( h_n(u_n)\Psi \bm{\nu})(x)|^p_{\ell_2}dx\\
& = \int_{\dR^d} \left| \int_{\dR^d\times\dR^d}R_{1-\eta}(x-(y-z))R_{1-\eta}(x+z)(h_n(u_n)\Psi  )(y-z)(h_n(u_n)\Psi  )(-z)dz f(dy) \right|^{p/2} dx \\
& = \int_{\dR^d}\left(\int_{\dR^d\times\dR^d}  R_{1-\eta}(y-z)R_{1-\eta}(z)(h_n(u_n)\Psi  )(x+y-z)(h_n(u_n)\Psi  )(x-z)dz f(dy)\right)^{p/2}dx \\
& \leq \left( \int_{\dR^d\times\dR^d}  R_{1-\eta}(y-z)R_{1-\eta}(z)\left(\int_{\dR^d}\left((h_n(u_n)\Psi  )(x+y-z)(h_n(u_n)\Psi  )(x-z)\right)^{p/2}dx\right)^{2/p} dz f(dy) \right)^{p/2} \\
& \leq \| h_n(u_n)\Psi   \|_{L_p}^p\left(\int_{\dR^d}  \int_{\dR^d}R_{1-\eta}(y-z)R_{1-\eta}(z)dz f(dy) \right)^{p/2}\\
&=  \| h_n(u_n)\Psi \|_{L_p}^p \left( \int_{\dR^d}R_{2-2\eta}(y)f(dy) \right)^{p/2}.
\end{aligned}
\end{equation}
Note that 
\begin{equation}
\label{by dalangs condition}
\int_{\dR^d}R_{2-2\eta}(y)f(dy)<\infty
\end{equation} 
by \eqref{reinforced Dalang's condition} and Remark \ref{Kernel}. Therefore,  
\begin{equation}
\label{ineq: bound of the stochastic part1}
\| h_n(u_n)\Psi  \bm{\nu} \|^p_{\dH_p^{\eta-1}(\ell_2,\tau)} 
\leq N\dE\int_0^\tau\int_{\dR^d}| h_n(u_n)\Psi |^p dxdt \leq N\dE\int_0^\tau\int_{\dR^d}|\Psi|^p+| v_{n} |^p dxdt < \infty,
\end{equation}
which implies \eqref{ineq: bound of the stochastic part}. Combining this with \eqref{eq:drifts}, we have  $v_{n}\in\cH_p^{\eta}(\tau)$. Besides, by applying \cite[Theorem 5.1]{kry1999analytic}, for any $t > 0$, we have
\begin{equation}
\label{to obtain estimate 1}
\begin{aligned}
\| v_n \|_{\cH_p^{\eta}(\tau\wedge t)}^p \leq N \left(\| \Psi u_0 \|_{U_p^{\eta}}^p + \| g \|_{\dH_p^{\eta-2}(\tau\wedge t)}^p ds + \| h_n(u_n)\Psi\bm{\nu} \|_{\dH_p^{\eta-1}(\tau\wedge t,\ell_2)}^p \right),
\end{aligned}
\end{equation}
where $N = N(\eta,d,p,K,T)$  and $g$ is given in \eqref{def of g}. Due   to \eqref{derivatives of hypercosine}, for any $s\leq \tau\wedge t$, we have
\begin{equation}
\label{to obtain estimate 2}
\begin{aligned}
\|g(s,\cdot)\|_{H_p^{\eta-2}}^p 
&\leq \| u_n(s,\cdot)( a^{ij}\Psi_{x^ix^j} + 2a^{ij}_{x^i}\Psi_{x^j} - b^i\Psi_{x^i} )\|_{H_p^{\eta-2}}^p + \| (2u_n(s,\cdot)a^{ij}\Psi_{x^i})_{x^j} \|_{H_p^{\eta-2}}^p \\
&\leq N\| v_{n}(s,\cdot) \|_{L_p}^p,
\end{aligned}
\end{equation}
where $N = N(\eta,a,d,p,K)$. In addition, by \eqref{ineq: bound of the stochastic part1},
\begin{equation}
\label{to obtain estimate 3}
\begin{aligned}
&\| h_n(s,\cdot,u_n(s,\cdot))\Psi \bm{\nu} \|_{H_p^{\eta-1}(\ell_2)}^p \leq N \| h_n(s,\cdot,u_n(s,\cdot)) \psi  \|_{L_p}^p \leq N + N\| v_{n}(s,\cdot) \|_{L_p}^p,
\end{aligned}
\end{equation}
where $N = N(\eta,a,d,p,K)$ is independent of $n$. 
We now apply  \eqref{to obtain estimate 2}, \eqref{to obtain estimate 3} to \eqref{to obtain estimate 1}, and use Theorem \ref{embedding theorems for cH} \eqref{embedding in time}, we have
\begin{equation}
\label{to apply gronwall}
\begin{aligned}
\| v_{n} \|_{\cH_p^{\eta}(\tau\wedge t)}^p 
&\leq N + N\|  \Psi u_0\|_{U_p^{\eta}}^p  + N \| v_{n} \|_{\dL_p(\tau\wedge t)}^p  \\
&\leq N + N\|  \Psi u_0\|_{U_p^{\eta}}^p  + N\int_0^{t} \dE\| v_{n} \|_{C([0,\tau\wedge s];L_p)}^p ds \\
&\leq N + N\|  \Psi u_0\|_{U_p^{\eta}}^p  + N\int_0^t \| v_{n} \|_{\cH^{\eta}_p(\tau\wedge s)}^p ds,
\end{aligned}
\end{equation}
where $N = N(\eta,\lambda,a,d,p,K,T)$.
Thus, by the Gronwall's inequality, we have
\begin{equation*}
\begin{aligned}
\| v_{n} \|_{\cH_p^{\eta}(\tau\wedge T)}^p \leq N + N\|  \Psi u_0\|_{U_p^{\eta}}^p,
\end{aligned}
\end{equation*}
where $N = N(\eta,\lambda,a,d,p,K,T)$.  Therefore, by Theorem \ref{embedding theorems for cH} \eqref{embedding in time and space}, we have \eqref{holder reg of un}.
In addition, \eqref{bound of u_n} follows from 
\begin{equation*}
\dE\left[\sup_{t\leq \tau,x\in \dR}(u_n(t,x))^p e^{-a|x|}\right] \leq \dE\left[\sup_{t\leq \tau,x\in \dR^d}(u_n(t,x))^p \frac{1}{\cosh (a|x|)}\right] \leq \dE\left[\sup_{t\leq \tau,x\in \dR^d}|v_{n}(t,x)|^p\right].
\end{equation*}
The lemma is proved.
\end{proof}

We now provide the proof of Theorem \ref{thm:existence of solution}.

\begin{proof}[\bf Proof of Theorem \ref{thm:existence of solution}]\
We first show that there is a nonnegative solution $u$ to \eqref{original_equation} and then show that any solution to \eqref{original_equation} is H\"older continuous almost surely. 

\textbf{(Step 1) (Existence)}
Since  we use the standard compactness argument for the existence of a stochastically weak solution (see e.g.  \cite[Theorem 1.2]{mytnik2006pathwise} and \cite[Theorem 2.6]{shiga1994two}),  we only outline the proof. 

From the bounds \eqref{holder reg of un} and \eqref{bound of u_n}, the Kolmogorov type tightness criterion (see \cite[Lemma 6.3]{shiga1994two} ) implies that $\{ u_n \}_{n\in\dN}$ is tight in $C(\dR_+, C_{tem})$. Then, by Prokhorov's theorem combined with Skorokhod's representation theorem, we have an appropriate probability space and the sequence of solutions $\tilde{u}_n$ on it, which is identical in law to $\{ u_n\}$ and converges to some nonnegative $u$ almost surely in $C(\dR_+, C_{tem})$. Since $h_n$ converges to $h$ uniformly on compact sets and $a^{ij},b^{i},c$ are bounded functions, it can be  shown that the limit $u\in C(\dR_+, C_{tem}) $ is indeed a nonnegative solution of \eqref{eq:sol_int_eq_form}.

\textbf{(Step 2) (H\"older regularity)} We assume $u$ is a solution to \eqref{original_equation} in the sense of Definition \ref{definition_of_solution}. Let $p > \frac{d+2}{\eta}$. For $n\in \dN$ and $a>0$, set
$$ \tau_n :=\tau\wedge n\wedge \inf\left\{ t\geq0: \sup_{x\in\dR^d}u(t,x)e^{-a|x|} \geq n \right\}.
$$
Fix $t\leq \tau_n$ and take $\Psi(x) = \Psi_{2a}(x) = \frac{1}{\cosh(2a|x|)}$. 
Then, since we assume \eqref{condition of h}, as in \eqref{to obtain estimate 3}, we have
\begin{equation*}
\begin{aligned}
\| h(u) \Psi \bm{\nu} \|^p_{\dH_p^{\eta-1}(\ell_2,\tau_n)} 
\leq N \| h(u) \Psi \|_{\dL_p(\tau_n)}^p 
\leq N \int_{\dR^d} \left( 1 + e^{pa|x|} \right)|\Psi(x)|^p dx <\infty,
\end{aligned} 
\end{equation*} 
In addition, if we define
$$ g :=  u\left(a^{ij}\Psi_{x^ix^j}+2a^{ij}_{x^i}\Psi_{x^j}-b^i\Psi_{x^i}\right)  - \left( 2a^{ij} \Psi_{x^i}(x) u \right)_{x^j},
$$
then, similar to \eqref{to obtain estimate 2}, we have $\| g \|_{\dH_p^{\eta-2}(\tau_n)}^p  <\infty.$
Since $\Psi u_0\in L_p(\Omega,\cF_0,H_p^{\eta-2/p})$, \cite[Theorem 5.1]{kry1999analytic} yields that there exists a unique $v\in\cH_p^{\eta}(\tau_n)$ such that
$$\partial_t v = a^{ij}v_{x^ix^j} + b^iv_{x^i} + c v + g +   h(u)\Psi \dot F,\quad t\leq \tau; \quad v(0,\cdot) = \Psi u_0,
$$ 
in the sense of Definition \ref{definition_of_solution} (see also \cite[Remark 5.3]{kry1999analytic}). Note  that $h(u) \Psi$ is used in  the stochastic integral  part, not $h(v) \Psi$. 
Observe that for fixed $\omega\in\Omega$, $\bar u := v - \Psi u$ satisfies
$$ \partial_t \bar u = a^{ij}\bar u_{x^ix^j}+b^i\bar u_{x^i}+c\bar u,\quad 0<t<\tau_n;\quad \bar u(0,\cdot) = 0.
$$
Since $\Psi u, v\in L_p((0,\tau_n)\times\dR^d)$, we have $\bar u\in L_p((0,\tau_n)\times\dR^d)$. Thus, by the deterministic version of $L_p$ theory (e.g. \cite{ladyvzenskaja1988linear}), we have $\bar u(t,\cdot) = 0$ in $L_p(\dR^d)$ for all $t\leq \tau_n$ almost every  $\omega$. Thus, $\Psi u = v\in \cH_p^{\eta}(\tau_n)$. Since $p > \frac{d+2}{\eta}$, by Theorem \ref{embedding theorems for cH} \eqref{embedding in time and space}, for any $\gamma<\eta/2$, we have
$$ \Psi u \in C_{t,x}^{\gamma,2\gamma}([0,\tau_n]\times\dR^d),$$
almost surely. The theorem is proved.

% For the H\"older regularity of the solution on the outside of the compact domain, we mimic the proof of Lemma 2.2 of \cite{krylov1997result}. Notice that for any $\phi\in\cS$, $v(t,x) := u(t,x) e^{-KM(x)}$ satisfies 
% \begin{equation*}
% \begin{aligned}
% (v(t,\cdot),\phi) &= (u_0 e^{-KM},\phi) + \int_0^t ( v(s,\cdot),(\cL^*\phi)(s,\cdot) ) + (g(s,\cdot),\phi) ds \\
% &\quad + \int_0^t h(s,x,u(s,x))\phi(x)e^{-KM(x)} (f\ast e_k) dxdw_s^k,
% \end{aligned}
% \end{equation*}
% where $\cL^*$ is in \eqref{eq:adjoint} and 
% \begin{equation*}
% 	g := u e^{-KM} \left( -K(2a_{x^j}^{ij} - b^i)M_{x^i} + a^{ij}(-KM_{x^ix^j}+K^2M_{x^ix^j}) \right) + K \left( u e^{-KM} a^{ij} M_{x^i} \right)_{x^j}
% \end{equation*}
% \vspace{2cm}

% Similar to \eqref{l2 calculation}, \eqref{by dalangs condition} yields
% \begin{equation*}
% \begin{aligned}
% &\dE\int_0^\tau\| h(u)\phi_m \bm{\nu} \|^2_{H_2^{\eta-1}(\ell_2)} \\
% & \leq N \int_{\dR^d} \left| h(s,z,u(s,z))\phi_m(z) \right|^2 dz   \\
% & \leq N \int_{Q_R}  h(s,z,u(s,z)) dz   \\
% \end{aligned}
% \end{equation*}

\end{proof}

\vspace{2mm}
%%%%%%%%%%%%%%%%%%%%%%%%%%%%%%%%%%%%%%%%%%%%%%%%%%%%%%%%%%%%%%%%%%%%%%%%%%%%%%%%%%%%%

%%%%%%%%%%%%%%%%%%%%%%%%%%%%%%%%%%%%%%%%%%%%%%%%%

% \bibliography{refs}
\bibliographystyle{plain}

\begin{small}

\noindent\textbf{Beom-Seok Han} [\texttt{hanbeom@postech.ac.kr}]\\
\noindent Pohang University of Science and Technology (POSTECH), Pohang, Gyeongbuk, South Korea \\

\noindent\textbf{Kunwoo Kim} [\texttt{kunwoo@postech.ac.kr}]\\
\noindent Pohang University of Science and Technology (POSTECH), Pohang, Gyeongbuk, South Korea \\

\noindent\textbf{Jaeyun Yi} [\texttt{jaeyun@kias.re.kr}]\\
\noindent Korea Institute for Advanced Study (KIAS), Seoul, South Korea

\end{small}

\end{document}